\def\ps@pprintTitle{%
     \let\@oddhead\@empty
     \let\@evenhead\@empty
     \def\@oddfoot{\footnotesize\itshape
     {\copyright 2018. This manuscript version is made available under the
      \href{https://creativecommons.org/licenses/by-nc-nd/4.0/}{CC-BY-NC-ND 4.0 license}.}
      \hfill}
     \let\@evenfoot\@oddfoot}
\newcommand{\eq}[1]{(\ref{#1})}
\def\bmi#1{\textbf{\textit{#1}}}
\newcommand{\wtilde}[1]{\widetilde{#1}}
\newcommand{\ol}[1]{\overline{#1}}
\def\dvg{\mbox{\rm div}}
\def\psibf{{\mbox{\boldmath$\psi$\unboldmath}}}
\def\sigmabf{{\mbox{\boldmath$\sigma$\unboldmath}}}
\def\alphabf{{\mbox{\boldmath$\alpha$\unboldmath}}}
\def\omegabf{{\mbox{\boldmath$\omega$\unboldmath}}}
\def\eeby#1{\eb_y({#1})}
\def\R{\hbox{\rm I\kern-0.2em R}}
\def\Z{\hbox{\rm Z\kern-0.3em Z}}
\def\chii{\mbox{$\chi$ \kern-1.0em $=$}}
\def\pd{\partial}
\def\Om{\Omega}
\def\1mY{\frac{1}{|Y|}}
\def\RR{{\mathbb{R}}}
\def\intY{\sim \kern-1.2em \int}
\def\Om{\Omega}
\def\pd{\partial}
\newcommand\dSy{\mathrm{\,dS}_{y}}  %
\def\1mY{\frac{1}{|Y|}}
\def\RR{{\mathbb{R}}}
\def\thetabf{{\mbox{\boldmath$\theta$\unboldmath}}}
\def\ub{{\bmi{u}}}
\def\vb{{\bmi{v}}}
\def\wb{{\bmi{w}}}
\def\fb{{\bmi{f}}}
\def\nb{{\bmi{n}}}
\def\zb{{\bmi{z}}}
\def\sb{{\bmi{s}}}
\def\tb{{\bmi{t}}}
\def\eb{{\bmi{e}}}
\def\xb{{\bmi{x}}}
\def\gb{{\bmi{g}}}
\def\Bb{{\bmi{B}}}
\def\Cb{{\bmi{C}}}
\def\Rb{{\bmi{R}}}
\def\Sb{{\bmi{S}}}
\def\Wb{{\bmi{W}}}
\def\Kb{{\bmi{K}}}
\def\Rb{{\bmi{R}}}
\def\Ib{{\bmi{I}}}
\def\Pibf{{\mbox{\boldmath$\Pi$\unboldmath}}}
\def\Hdb{{\bf{H}}^1}
\def\Hpdb{{\bf{H}}_\#^1}
\def\Vcal{\mathcal{V}}
\def\Dcal{\mathcal{D}}
\def\Lcal{\mathcal{L}}
\def\Acal{\mathcal{A}}
\def\Bcal{\mathcal{B}}
\def\Pcal{\mathcal{P}}
\def\Fcal{\mathcal{F}}
\def\Ical{\mathcal{I}}
\def\Mcal{\mathcal{M}}
\def\DDcalbf{{\mbox{\mathcal{I} \kern-0.2em\boldmath$\mathcal{D}$\unboldmath}}}
\def\dlt{\delta}
\def\intY{\sim \kern-1.2em \int}
\def\intYs{\smallsim \kern-.75em \int}
\newcommand{\smallsim}{\ensuremath \raisebox{.15em}{{$\scriptstyle\sim$}}}
\def\Dop{{{\rm I} \kern-0.2em{\rm D}}}
\def\Cop{{{\rm C} \kern-0.6em{\rm C}}}
\def\Aop{{{\rm A} \kern-0.6em{\rm A}}}%
\def\Hop{{{\rm I} \kern-0.2em{\rm H}}}%
\def\Iop{{{\rm I} \kern-0.2em{\rm I}}}%
\def\DDcalbf{{\mbox{{\boldmath$\Ical$\unboldmath}\kern-0.5em{\boldmath$\Dcal$\unboldmath}}}}
\newcommand\RELEASED[1]{}
\def\eq#1{(\ref{#1})}
\def\bmi#1{\textbf{\textit{#1}}}
\def\ol#1{\overline{#1}}
\def\dvg{\mbox{\rm div}}
\DeclareMathOperator{\conv}{conv}
\DeclareMathOperator{\coni}{coni}
\def\alphabf{{\mbox{\boldmath$\alpha$\unboldmath}}}
\def\betabf{{\mbox{\boldmath$\beta$\unboldmath}}}
\def\psibf{{\mbox{\boldmath$\psi$\unboldmath}}}
\def\sigmabf{{\mbox{\boldmath$\sigma$\unboldmath}}}
\def\thetabf{{\mbox{\boldmath$\theta$\unboldmath}}}
\def\omegabf{{\mbox{\boldmath$\omega$\unboldmath}}}
\def\pd{\partial}
\def\Om{\Omega}
\def\RR{{\mathbb{R}}}
\def\intY{\sim \kern-1.2em \int}
\def\xb{{\bmi{x}}}
\def\tb{{\bmi{t}}}
\def\ub{{\bmi{u}}}
\def\vb{{\bmi{v}}}
\def\wb{{\bmi{w}}}
\def\fb{{\bmi{f}}}
\def\nb{{\bmi{n}}}
\def\eb{{\bmi{e}}}
\def\gb{{\bmi{g}}}
\def\Bb{{\bmi{B}}}
\def\Cb{{\bmi{C}}}
\def\Wb{{\bmi{W}}}
\def\Kb{{\bmi{K}}}
\def\Ib{{\bmi{I}}}
\def\Pb{{\bmi{P}}}
\def\imx{{\bmi{i}}}
\def\Pibf{{\mbox{\boldmath$\Pi$\unboldmath}}}
\def\Hdb{{\bf{H}}^1}
\def\Hpdb{{\bf{H}}_\#^1}
\def\Vcal{\mathcal{V}}
\def\Mcal{\mathcal{M}}
\def\Acal{\mathcal{A}}
\def\Bcal{\mathcal{B}}
\def\Dop{{{\rm I} \kern-0.2em{\rm D}}}
\def\Hop{{{\rm I} \kern-0.2em{\rm H}}}
\def\Aop{{{\rm A} \kern-0.6em{\rm A}}}%
\def\chE#1{#1} 
\def\dSy{\mathrm{\,dS}_{y}}  %
\def\mx{{[m]}}
\def\eeb#1{\eb({#1})}
\def\eeby#1{\eb_y({#1})}
\def\aYm#1#2{a_{Y}^m \left ({#1},\,{#2}\right )}
\newcommand\ie{{\it{i.e.~}}}
\newcommand\cf{{{cf.~}}}
\newcommand\rhs{{{\rm r.h.s.~}}}
\newcommand\wrt{{\rm with respect to~}}
\newcommand\tot{{\rm tot}}
\newcommand\supp[1]{{\rm supp}\left({#1}\right)}
\def\aOm#1#2{a_{\Om} \left ({#1},\,{#2}\right )}
\def\bOm#1#2{b_{\Om} \left ({#1},\,{#2}\right )}
\def\cOm#1#2{c_{\Om} \left ({#1},\,{#2}\right )}
\newtheorem{theorem}{Theorem}[section]
\newtheorem{lemma}[theorem]{Lemma}
\newtheorem{e-proposition}[theorem]{Proposition}
\newtheorem{corollary}[theorem]{Corollary}
\newtheorem{e-definition}[theorem]{Definition\rm}
\newtheorem{remark}{\it Remark\/}
\begin{document}

\begin{frontmatter}



\title{Optimization of the porous material described by the Biot model}





\author[FAU]{D.~H\"ubner}
\ead{daniel.huebner@fau.de}
\author[NTIS]{E.~Rohan\corref{cor1}}
\ead{rohan@kme.zcu.cz}
\cortext[cor1]{Corresponding author}
\author[NTIS]{V.~Luke\v{s}}
\ead{vlukes@kme.zcu.cz}
\author[FAU]{M.~Stingl}
\ead{stingl@am.uni-erlangen.de}

\address[NTIS]{Department of Mechanics \& 
  NTIS New Technologies for Information Society, Faculty of Applied Sciences, University of West Bohemia in Pilsen, \\
Univerzitn\'\i~22, 30614 Plze\v{n}, Czech Republic}
\address[FAU]{Department Mathematik, Friedrich-Alexander University Erlangen-N\"urnberg, Cauerstrasse 11, 91058 Erlangen, Germany}


\begin{abstract}
The paper is devoted to the shape optimization of microstructures generating porous locally periodic materials saturated by viscous fluids. At the macroscopic level, the porous material is described by the Biot model defined in terms of the effective medium coefficients, involving the drained skeleton elasticity, the Biot stress coupling, the Biot compressibility coefficients, and by the hydraulic permeability of the Darcy flow model. By virtue of the homogenization, these coefficients are computed using characteristic responses of the representative unit cell consisting of an elastic solid skeleton and a viscous pore fluid.
For the purpose of optimization, the sensitivity analysis on the continuous level of the problem is derived.  We provide sensitivities of objective functions constituted by the Biot model coefficients  with respect to the underlying pore shape described by a
B-spline box which embeds the whole representative cell. We consider material design problems in the framework of which the layout of a single representative cell is optimized. Then we propose a sequential linearization approach to the two-scale problem in which local microstructures are optimized with respect to macroscopic design criteria. Numerical experiments are reported which include stiffness maximization with constraints allowing  for a sufficient permeability, and vice versa. Issues of the design anisotropy, the spline box parametrization are discussed. In order to avoid remeshing a geometric regularization technique based on injectivity constraints is applied.
\end{abstract}

\begin{keyword}
porous media \sep shape optimization \sep material optimization \sep homogenization \sep Biot model \sep sensitivity analysis \sep two-scale modelling
\end{keyword}

\end{frontmatter}


\section{Introduction}\label{sec-Introduction}

The paper deals with shape optimization of microstructures generating porous locally periodic materials saturated by viscous fluids.
This topic is challenging in the context of new material design with obvious applications in many fields of engineering.
The aim is to design optimal microstructures according to criteria related to the effective material properties.
Potentially this enables to design optimal structures with nonuniform material properties at the macroscopic level, such that a desired functionality of the whole structure is achieved.

A comprehensive overview of shape optimization with the homogenization method is provided by Allaire in \cite{Allaire2001}.
For a general summary of the homogenization method we refer to the first part of the review paper of Hassani and Hinton \cite{Hassani1998} and references therein.
Shape optimization for periodic problems has been conducted also by Barbarosie and Toader \cite{Barbarosie2009}.
The authors deduce the topological and, using the perturbation of identity method, also the shape derivative for elliptic problems in unbounded domains subject to periodicity conditions.
This allows them to apply gradient-based methods to design problems involving homogenized material properties.
In \cite{Barbarosie2012} this work is extended to design problems on two scales with locally periodic microstructures in linear elasticity.

In the context of fluid-saturated media, the problem of the material design is typically treated by the topology optimization methods \cite{Guest-Prevost-2006,Xu-Cheng-2010,Andreasen2013}.
To solve the inverse homogenization problem providing optimized microstructures featured by desired effective material properties relevant to the macroscopic scale, in \cite{Guest-Prevost-2006} the fluid-structure interaction problem is considered as decomposed into two problems solved independently in the solid and fluid parts.
This enables to employ the SIMP method of \cite{Bendsoe2003} as the tool for the topology optimization with a two-criteria objective function based on the compliance and permeability.
The same approach based on the ``design material density'' related to the two phases was pursued in \cite{Xu-Cheng-2010}, where the seepage constitutes the constraint for the maximum stiffness design.
Therein the sensitivity analysis was derived which enabled to use the method of moving asymptotes in the numerical optimization.
A fairly two-scale optimization problem was reported by Andreasen and Sigmund in \cite{Andreasen2013}, where the homogenization approach to the topology optimization was employed to solve the fluid-structure interaction at the pore-level of the Biot type medium \cite{Biot1955} filling a given domain. It is noted that in \cite{Andreasen2013} neither the shape nor the topology of the base cell is optimized. Instead, a given layout is homogenized for different porosity values on a fixed grid and then macroscopic Biot coefficients are calculated by interpolation.

In this article, 
following a number of works dealing with homogenization of the quasistatic fluid-structure interaction in the microstructure, see e.g. \cite{Auriault-Palencia77,Burridge-Keller-1981,Hornung1997book,Lee-Mei-1997,rohan-etal-CMAT2015-porel,RSW-ComGeo2013},
the porous material is described as the Biot continuum derived by the homogenization of two decoupled problems:
1) deformation of a porous solid saturated by a slightly compressible static fluid and 2) Stokes flow through the rigid porous structure.
The effective medium properties are given by the drained skeleton elasticity, the Biot stress coupling, the Biot compressibility coefficients, and by the hydraulic permeability of the Darcy flow model.
These are computed using characteristic responses of the representative unit cell constituted by an elastic skeleton and by the fluid channel.
We consider two different kinds of optimal design problems: The first one is related to material design whereby two criteria of optimal poroelastic material are considered.
First, the objective is to maximize stiffness of the drained porous material and allow for a sufficient permeability and vice versa.
In both cases, anisotropy can be enforced according chosen preferential directions.
The second one involves all the Biot model coefficients: the objective is to minimize the undrained material compliance.
The second kind of the optimum design is related to a given macroscopic domain occupied by the Biot type porous medium.
As the optimality criterion we consider the structure stiffness under a constraint associated with a prescribed seepage through the domain.
Although we do not solve numerically the whole two-scale problem, we present a methodology and give numerical illustrations. As mentioned above, a similar kind of the two-scale optimization problems were considered within the topology optimization framework in \cite{Andreasen2013} and \cite{Xu-Cheng-2010}.

Here, using a spline parametrization of the representative periodic cell, we introduce design variables which describe the pore shape of the locally periodic  microstructures considered in both the kinds of optimum design problems.
Thus, as a difference with the above works dealing with topology optimization, we apply shape optimization to design the base cell and use the shape sensitivity technique based on the material derivative approach
\cite{haslinger_makinen_2003:_introduction_shape_optimization,haug_choi_komkov_1986:_design_sensitivity_analysis}.
In particular, we present shape derivatives of the homogenized coefficients obtained by the velocity or also speed method \cite{Delfour2011}.
Although the modeling as well as the sensitivity analysis is carried out completely on the continuous level, see \cite{Rohan-AMC}, the topology of the computational grid used in the discretized version of the problems is fixed.
A clear advantage of this classic technique (see, e.g.~\cite{haslinger_makinen_2003:_introduction_shape_optimization}) is that remeshing is avoided and that gradients can be computed consistently on the discretized level of the problem. On the other hand shape variations have to be limited in order to prevent degeneration of the finite element mesh, which often results in rather conservative designs.
As a compromise we suggest a spline box approach, in the framework of which the design freedom (i.e. the admissible positions of the control points of the spline) is not limited in an absolute manner. Rather than this, a technique from Xu et. al. \cite{Xu2013} is adapted guaranteeing that for all feasible choices of control points the deformation of the base cell is described by a one-to-one map.
It is shown that sufficient conditions can be expressed by linear functions of the design variables.
This is a clear advantage, as linear constraints can be strictly satisfied during all optimization iterations avoiding break-down of the optimization process due to numerical reasons.


The remainder of the article is structured as follows:
we first introduce the homogenized Biot-Darcy model in its weak formulation and state the coupled model of poroelastic media in Section \ref{sec-linBiot}.
Next, in Section \ref{sec-sa} we derive shape sensitivities of the homogenized coefficients.
In Section \ref{sec-optprob} we give the optimization problem with a detailed description of the parametrization and some regularity preserving constraints to avoid remeshing.
This is followed by some numerical examples in Section \ref{sec-numres}.
Then, in Section \ref{sec-2scopt} we investigate a two-scale problem and optimize the microscopic topology at some exemplary macroscopic locations for a given macroscopic state.
Finally we summarize our findings and give an outlook in Section \ref{sec-concl}.

\section{Linear homogenized Biot continuum}\label{sec-linBiot}

It is well known that within the small strain theory, behavior of the fluid-saturated porous materials is governed
by the Biot model \cite{Biot1955}. The poroelastic coefficients obtained by M.A. Biot hold also for quasistatic situations, see \cite{Burridge-Keller-1981}.
Assuming slow flows through a slightly deforming porous structure
undergoing quasistatic loading of the solid phase by external forces, whereby the inertia
forces can be neglected, the upscaled medium is described by the Biot-Darcy coupled system of equations which was  derived by the homogenization of two decoupled problems: 1)
deformation of a porous solid saturated by a slightly compressible static fluid and 2) Stokes flow through the rigid porous structure, \cf \cite{Mik-Wheel-2012,RSW-ComGeo2013,Rohan-AMC}. In a more general setting, homogenization of the viscous flow in deforming media was considered in \cite{IlievMikelic2008}.

\subsection{The homogenized Biot -- Darcy model}
Here we summarize  the homogenized model of the porous elastic medium. The model incorporates local problems for characteristic responses which are employed to compute the effective material coefficients of the Biot model.

The local problems (specified below) related to the homogenized model
are defined at the representative unit microscopic cell $Y =
\Pi_{i=1}^3]0,\ell_i[ \subset \RR^3$ which splits into the solid part
occupying domain $Y_m$ and the complementary channel part $Y_c$, thus
\begin{equation}\label{eq-6}
\begin{split}
Y   = Y_m  \cup Y_c  \cup \Gamma_Y \;,\quad
Y_c   = Y \setminus Y_m  \;,\quad
\Gamma_Y   = \ol{Y_m } \cap \ol{Y_c }\;,
\end{split}
\end{equation}
where by $\ol{Y_d }$ we denote the closure of the open bounded domain $Y_d$.

We shall employ the following notation. By $\intYs_{Y_d} = |Y|^{-1}\int_{Y_d}$ with
$Y_d\subset \ol{Y}$ we denote the local average, although $|Y|=1$ can
always be chosen. The usual elasticity bilinear
form is
\begin{equation}
\aYm{\wb}{\vb} = \intY_{Y_m} (\Dop \eeby{\wb}) : \eeby{\vb}\;,
\end{equation}
where $\Dop = (D_{ijkl})$ is the elasticity tensor satisfying the usual symmetries, $D_{ijkl} = D_{klij} = D_{jikl}$, and $\eeby{\vb} = \frac{1}{2}(\nabla_y \vb + (\nabla_y \vb)^T)$ is the linear strain tensor associated with the displacement field $\vb$.
The Lebesgue spaces of 2nd-power integrable functions on
an open bounded domain $D\subset \RR^3$ is denoted by $L^2(D)$, the Sobolev space $\Wb^{1,2}(D)$
of the square integrable vector-valued functions on $D$ including the 1st order
generalized derivative, is abbreviated by $\Hdb(D)$.  Further
$\Hpdb(Y_m)$ is the Sobolev space of vector-valued Y-periodic
functions (the subscript $\#$).    \chE{Below we employ also $\Pibf^{ij} = (\Pi_k^{ij})$, $i,j,k   = 1,2,3$ with components $\Pi_k^{ij} = y_j\delta_{ik}$}.

If the structure is perfectly periodic, the decomposition of the
microstructure and the microstructure parameters are independent of
the macroscopic position $x \in \Om$. Otherwise the local problems
must be considered at any macroscopic position, i.e. for almost any
$x \in \Om$, \cf \cite{Brown2011}.

The local microstructural response is obtained by solving the following decoupled problems:
\begin{itemize}
\item Find
$\omegabf^{ij}\in \Hpdb(Y_m)$ for any $i,j = 1,2,3$
satisfying
\begin{equation}\label{eq-h5a}
\begin{split}
\aYm{\omegabf^{ij} + \Pibf^{ij}}{\vb} & = 0\;, \quad \forall \vb \in  \Hpdb(Y_m)\;.
\end{split}
\end{equation}
\item Find
$\omegabf^P \in \Hpdb(Y_m)$
satisfying
\begin{equation}\label{eq-h5b}
\begin{split}
\aYm{\omegabf^P}{\vb} & = \intY_{\Gamma_Y} \vb\cdot \nb^\mx \dSy\;, \quad
\forall \vb \in  \Hpdb(Y_m) \;.
\end{split}
\end{equation}
\item Find $(\psibf^i,\pi^i) \in \Hpdb(Y_c) \times L^2(Y_c)$ for $i = 1,2,3$ such that
\begin{equation}\label{eq-S3}
\begin{split}
\int_{Y_c} \nabla_y \psibf^k: \nabla_y \vb - \int_{Y_c} \pi^k \nabla\cdot \vb & = \int_{Y_c} v_k\quad \forall \vb \in \Hpdb(Y_c) \;,\\
\int_{Y_c} q \nabla_y \cdot \psibf^k & = 0\quad \forall q \in L^2(Y_c)\;.
\end{split}
\end{equation}
\end{itemize}

Using the characteristic responses \eq{eq-h5a}--\eq{eq-S3} obtained at
the microscopic scale the homogenized coefficients, describing the
effective properties of the deformable porous medium, are given by the
following expressions:
\begin{equation}\label{eq-h8}
\begin{split}
A_{ijkl} = \aYm{\omegabf^{ij} + \Pibf^{ij}}{\omegabf^{kl} + \Pibf^{kl}}\;,\quad
C_{ij} =  -\intY_{Y_m}\dvg_y \omegabf^{ij} = \aYm{\omegabf^P}{\Pibf^{ij}}\;,\\
N  =  \aYm{\omegabf^P}{\omegabf^P} = \intY_{\Gamma_Y} \omegabf^P\cdot \nb \dSy\;,\quad
K_{ij}  = \intY_{Y_c} \psi_i^j =  \intY_{Y_c} \nabla_y \psibf^i : \nabla_y \psibf^i \;.
\end{split}
\end{equation}
Obviously, the tensors $\Aop = (A_{ijkl} )$, $\Cb = (C_{ij} )$ and
$\Kb = (K_{ij} )$ are symmetric, $\Aop$ adheres all the symmetries of
$\Dop$; moreover $\Aop$ is positive definite \chE{and $N > 0$}. The
hydraulic permeability $\Kb$ is positive semi-definite in general,
although it is positive definite whenever the channels intersect all
faces of $\pd Y$ and $Y_c$ is connected.

\subsection{Coupled flow deformation problem}

The Biot--Darcy model of poroelastic media for quasistatic problems
imposed in $\Om$ is constituted by the following equations involving
the homogenized coefficients:
\begin{equation}\label{eq-B1}
\begin{split}
-\nabla \cdot \sigmabf  & = \fb^s \;,\quad
\sigmabf  = \Aop \eeb{\ub} - \Bb p\;,\\
 -\nabla\cdot \wb & = \Bb:\eeb{\dot\ub} +  M \dot p  \;,\quad
\wb = -\frac{\Kb}{\bar\eta} \left (\nabla p - \fb^f \right)\;,\\
\end{split}
\end{equation}
where
\begin{equation}\label{eq-B1a}
\begin{split}
\Bb & := \Cb + \phi \Ib\;,\\
M & := N + \phi \gamma\;.
\end{split}
\end{equation}
Above $\gamma$ is the fluid compressibility and  $\phi = |Y_c|/|Y|$ is the volume fraction. The volume
effective forces in \eq{eq-B1} acting in the solid and fluid phases are denoted by
$\fb^s$ and $\fb^f$, respectively.

\subsection{Some poroelastic properties}
Poroelastic properties of porous media can be measured by experiments which consider an interplay between macroscopic quantities:
stress $\sigmabf$, strain $\eb$, fluid pressure $p$ and pore fluid increase $\zeta$, whereby
$\zeta = \nabla\cdot\wb$. Besides the relationship between $(\sigmabf,\zeta)$ and $(\eb,p)$, as defined in \eq{eq-B1}, the following equations establish the inverse mapping,
\begin{equation}\label{eq-B2}
\begin{split}
\eb & = \Cop \sigmabf + \Sb \zeta\;,\\
p & = -\Sb : \sigmabf + K \zeta\;,
\end{split}
\end{equation}
where the coefficients $\Cop$, $\Sb$ and $K$ can be expressed in terms of $\Aop$, $\Bb$ and $M$, as follows:
\begin{equation}\label{eq-B3}
\begin{split}
\mbox{ undrained compliance } \Cop & = K(\Iop - \Bb\otimes\Bb)\Aop^{-1}\;,\\
\mbox{ Skempton coefficients } \Sb & = K\Aop^{-1}\Bb\;,\\
\mbox{ a bulk modulus } K & = (M + \Bb\otimes\Bb : \Aop^{-1})^{-1}\;.
\end{split}
\end{equation}
Coefficient $\Cop$ can be measured during the undrained test, \ie $\zeta = 0$. Coefficients $\Sb$ and $K$ can be measured during the
jacketed test, by controlling amount of the pore fluid volume $\zeta$ while the specimen is unloaded, \ie $\sigmabf = 0$.

\begin{remark}\label{rem1}
To derive expressions for $\Cop $, $\Sb$, and $K$ in \eq{eq-B3}, we start with the mass conservation which now reads
$\Bb:\eb + M p = \zeta$, \ie, the pore fluid volume increase $\zeta$ leads to the pressure increase $p$ and to the macroscopic deformation $\eb$.
Using the constitutive law for the total stress $\sigmabf$, see \eq{eq-B1}, the straightforward manipulations yield
\begin{equation*}
\begin{split}
\eb & = \Aop^{-1}(\sigmabf + \Bb p)\;,\\
M p & = \zeta - \Bb: \Aop^{-1}(\sigmabf + \Bb p)\;,
\end{split}
\end{equation*}
hence
\begin{equation}\label{eq-B3a}
\begin{split}
p & = (\zeta - \Bb: \Aop^{-1}\sigmabf) / (M+ \Bb : \Aop^{-1}\Bb)\;,\\
\eb & = \big(\Aop^{-1}- (\Bb \otimes\Bb) \Aop^{-1} / (M+ \Bb : \Aop^{-1}\Bb)\big)\sigmabf \\
& \quad + \zeta \Aop^{-1}\Bb/ (M+ \Bb : \Aop^{-1}\Bb)\;,
\end{split}
\end{equation}
thus,  \eq{eq-B2} holds with the coefficients defined in \eq{eq-B3}.
Note the symmetry $\Bb:\Aop^{-1}\sigmabf = \sigmabf:\Aop^{-1}\Bb$.
\end{remark}

\section{Optimization problems}\label{sec-optprob}
We distinguish between the material optimization (MO) and the two-scale optimization (2SO).
In both MO and 2SO we use the shape optimization approach to provide globally, or locally desired microstructures.
In the latter case, the optimality of the material is achieved locally for a macroscopic problem with a given domain and specific boundary conditions.

\subsection{Material Optimization}\label{sec-matopt}
In this study we consider drained or undrained porous medium with periodic structure.
The criteria of optimal poroelastic material are related to the stiffness and permeability; the following problems are considered:

\begin{itemize}

\item \emph{Stiffness maximization} with a sufficient permeability guaranteed. The stiffness  is maximized \wrt preferred strain modes.
We require that the permeability $\Kb$ is sufficiently large compared to a given threshold, whereby anisotropy can be enforced according chosen preferential directions.
In the following text, these problems are referred to by abbreviations \textit{S/P}, \textit{S/P-bis} and \textit{S/PX};

\item \emph{Permeability maximization} with a sufficient stiffness guaranteed.
These optimization problems which are referred to by abbreviations \textit{P/S}, \textit{P/S-bis}, are reciprocal to the above formulations \textit{S/P}.

\item \emph{Compliance minimization of the undrained medium} with limited drained stiffness.
This problem, which is abbreviated by \textit{C/S},  involves all the Biot model coefficients, but the permeability is disregarded:
the objective is to minimize the undrained material compliance $\Cop$ for a loading such that there is no fluid redistribution.
\end{itemize}

Below we define problems \textit{S/P}, \textit{P/S} and \textit{C/S}, where the following functions are involved as the objective, or the constraint functions.
Let $\eb^k = (e_{ij}^k)$ be given strain modes and $\gb^k = (g_i^k)$  given directions, $k= 1,2,3$. We define:

\begin{equation}\label{eq-op1}
\begin{split}
\Phi_\eb^k(\Aop) = \eb^k: \Aop\eb^k \;, \quad \Psi^k(\Kb) = \gb^k\cdot\Kb \gb^k\;, \\
\Phi_\eb(\Aop) = \sum_k \gamma^k \Phi_\eb^k(\Aop)\;,
\quad \Psi(\Kb) = \sum_k \beta^k \Psi^k(\Kb)\;,
\end{split}
\end{equation}
where tensors $\eb^k=(e_{ij}^k)$ and vectors $\gb^k=(g_{i}^k)$ are linearly independent, i.e. $\sum_k \gamma^k \eb^k \not =\bmi{0}$ for $\sum_k |\gamma^k|\not = 0$,  and
$|\gb^1\cdot(\gb^2\times\gb^3)| > 0$.
The weights $\gamma^k$ and $\beta^k$ are to be given.

\subsubsection{Formulation --- problems S/P, S/P-bis and S/PX}\label{ssec-SP}
The aim is to maximize stiffness of the drained porous material and allow for a sufficient permeability.
These two properties characterizing the homogenized medium are related to tensors $\Aop$ and $\Kb$.
The state problem \eq{eq-h8} constitutes the constraint in the following design problem:
\begin{equation}\label{eq-op2}
\begin{split}
& \max_{\alphabf \in \Acal\times\RR} \Phi_\eb(\Aop) \\
\mbox{s.t.} \quad & \alphabf \rightarrow (\Aop,\Kb) \mbox{ by eqs. } \eq{eq-h8} \\
 & \kappa_0 \leq \Psi^k(\Kb)\;,\ k=1,2,3 \;, \\
& {\kappa_1 \leq \Psi(\Kb)}\;,\\
& r(|Y_c| - V_c^0 ) = 0\;,\\
 & |Y| = 1\;,
\end{split}
\end{equation}
where $r=1$ in the \textit{S/P-bis} problem, otherwise $r=0$, and by $V_c^0$ we denote the ``initial'' volume of $Y_c$.
By choosing $\kappa_0$ and $\kappa_1$ we distinguish between two alternative formulations of the optimization problem:
\begin{itemize}
\item \textit{S/P} or \textit{S/P-bis} problem: if $\kappa_0 > 0$ and $\kappa_1 \rightarrow -\infty$, then a sufficient permeability is guaranteed in all preferred directions.

\item \textit{S/PX} problem: if $\kappa_1 > 0$ and $\kappa_0 \rightarrow -\infty$, then only a weighted sum of the permeabilities is guaranteed.
\end{itemize}

We additionally apply periodic boundary conditions on the homologous control points \eq{eq-perbc} and injectivity constraints \eq{eq-inj}, as discussed in Section~\ref{sec-param}.


\subsubsection{Formulation --- problem P/S, P/S-bis and P/SX}\label{ssec-PS}
As a modification of the previously defined problem, now the role of the stiffness and permeability commutes.
The aim is to maximize permeability of the drained porous material whereby a sufficient stiffness must be achieved.
The design problem reads:
\begin{equation}\label{eq-op3}
\begin{split}
 & \max_{(\alphabf,\kappa) \in \Acal\times\RR} \Psi(\Kb)\\
\mbox{s.t.} \quad & \alphabf \rightarrow (\Aop,\Kb) \mbox{ by eqs. } \eq{eq-h8} \\
 & s_0 \leq \Phi_\eb^k(\Aop)\;,\ k=1,2,3 \;,\\
& s_1 \leq \Phi_\eb(\Aop)\;,\\
& r(|Y_c| - V_c^0 ) = 0\;,\\
 & |Y| = 1\;,
\end{split}
\end{equation}
where $r=1$ for the \textit{P/S-bis} problem, otherwise $r=0$.
Similar to the previous problems we distinguish between different formulations:
\begin{itemize}
\item \textit{P/S} or \textit{P/S-bis} problem: if $s_0 > 0$ and $s_1 \rightarrow -\infty$, then a sufficient stiffness is guaranteed in all preferred directions.
\item \textit{P/SX} problem: if $s_1 > 0$ and $s_0 \rightarrow -\infty$, then only a weighted sum of the directional stiffnesses is guaranteed.
\item \textit{P/SX'} problem: if $s_0 > 0$ and $s_1 > 0$, then stiffness in all preferred directions as well as a weighted sum of the stiffnesses is guaranteed.
\end{itemize}



\subsubsection{Formulation --- problem {C/S}}\label{ssec-SU}
This optimization problem merits minimization of the undrained material compliance $\Cop$, while the drained compliance $\Aop$ is kept above a given threshold.
Thus, all effective medium material parameters are involved in the formulation, see \eq{eq-B3}.
Let $\sigmabf = (\sigma_{ij})$ be a given stress mode and $\gb^k = (g_i^k)$ be given directions, as above. We now define
\begin{equation}\label{eq-op4}
\begin{split}
\Phi_\sigmabf(\Cop) = \sigmabf : \Cop \sigmabf\;.
\end{split}
\end{equation}
The optimal shape problem reads as follows,
\begin{equation}\label{eq-op5}
\begin{split}
 & \min_{\alphabf \in \Acal} \Phi_\sigmabf(\Cop) \\
\mbox{s.t.} \quad & \alphabf \rightarrow (\Aop,\Bb,M) \mbox{ by eqs. } \eq{eq-h8} \\
 & \Phi_\eb^k(\Aop) \leq s_0\;,\ k=1,2,3 \\
 & |Y| = 1\;.
\end{split}
\end{equation}
We recall that $\Cop$ is defined in  \eq{eq-B3}, thus $\Cop = K(\Iop - \Bb\otimes\Bb)\Aop^{-1}$ with
$K  = (M + \Bb\otimes\Bb : \Aop^{-1})^{-1}$.

\subsection{Towards 2-scale optimization}\label{sec-2scopt}
In contrast with the preceding section, where we considered the design of an optimal material, now we are looking for an optimal distribution of microstructures with slowly varying properties in a domain of interest with a specific loads and a desired flow regime.
Let $\Om \subset \RR^3$ be an open bounded domain. Its boundary splits, as follows:
$\pd\Om = \Gamma_D \cup \Gamma_N$ and also $\pd\Om = \Gamma_p \cup \Gamma_w$, where $\Gamma_D \cap \Gamma_N = \emptyset$ and $\Gamma_p \cap \Gamma_w = \emptyset$.

We consider the steady state problem for the linear Biot continuum
and give a formulation of the optimization problem of local microstructures $\Mcal(x)$, $x \in \Om$, for which the homogenized coefficients of the Biot continuum are computed.
By $\ub$ we denote the macroscopic displacement field, whereas by $P$ we refer to the macroscopic pressure.

Assuming the local microstructures $\Mcal(x)$ are given and the HC are computed,
we consider the macroscopic state problem: find $\ub$ and $P$ which satisfy
\begin{equation}\label{eq-opg1}
\begin{split}
-\nabla\cdot\left(\Aop \eeb{\ub} - P \Bb\right) & = 0\quad \mbox{ in } \Om\;,\\
\ub & = 0 \quad \mbox{ in }\Gamma_D\;,\\
\left(\Aop \eeb{\ub} - P \Bb\right) \cdot \nb & = \gb \quad \mbox{ in }\Gamma_N\;,
\end{split}
\end{equation}
and
\begin{equation}\label{eq-opg2}
\begin{split}
-\nabla\cdot\Kb\nabla P & = 0\quad \mbox{ in } \Om\;,\\
P & = \bar p^k \quad \mbox{ on } \Gamma_p^k\;,\quad k = 1,2\;,\\
\nb\cdot\Kb\nabla P & = 0 \quad \mbox{ on } \Gamma_w\;,\\
\end{split}
\end{equation}
where $\gb$ are the traction surface forces and $\bar p^k$ are given pressures on boundaries $\Gamma_p^k$, whereby $\Gamma_p$ consists of two disconnected, non-overlapping  parts,
$\Gamma_p = \Gamma_p^1\cup \Gamma_p^2$.
It is obvious that \eq{eq-opg1} splits into two parts: first, \eq{eq-opg2} can be solved for $P$, then \eq{eq-opg1} is solved for $\ub$.

Further, we consider an extension of $\bar p^k$ from boundary $\Gamma_p^k$ to the whole domain $\Om$, such that
$\bar p^k = 0$ on $\Gamma_p^l$ (in the sense of traces) for $l\not = k$ .
Then $P = p + \sum_k\bar p^k$ in $\Om$, such that $p = 0$ on $\Gamma_p$.
For the sake of the notation simplicity, we introduce $\bar p=\sum_k\bar p^k$.
By virtue of the Dirichlet boundary conditions for $\ub$ and $p$, we introduce the following spaces:
\begin{equation}\label{eq-opg3}
\begin{split}
V_0 & = \{\vb \in \Hdb(\Om)\,|\; \vb = 0 \mbox{ on } \Gamma_D\}\;,\\
Q_0 & = \{q \in H^1(\Om)\,|\; q = 0 \mbox{ on } \Gamma_p\}\;.
\end{split}
\end{equation}
We shall employ the bilinear forms and the linear functional $g$,
\begin{equation}\label{eq-opg4}
\begin{split}
\aOm{\ub}{\vb} & = \int_\Om (\Aop\eeb{\ub}):\eeb{\vb}\;,\\
\bOm{p}{\vb} & = \int_\Om p \Bb:\eeb{\vb}\;,\\
\cOm{p}{q} & = \int_\Om \nabla q\cdot \Kb\nabla p\;,\\
g(\vb) & = \int_{\Gamma_N} \gb \cdot \vb\;.
\end{split}
\end{equation}
The weak formulation reads, as follows: find $\ub\in V_0$ and $p \in Q_0$, such that
\begin{equation}\label{eq-opg5}
\begin{split}
\aOm{\ub}{\vb} - \bOm{p}{\vb} & = g(\vb) + \bOm{\bar p}{\vb}\;,\quad \forall \vb \in V_0\;,\\
\cOm{p}{q} & = -\cOm{\bar p}{q}\;,\quad q \in Q_0\;.
\end{split}
\end{equation}
Since the two fields are decoupled, first $p$ is solved from \eq{eq-opg5}$_2$, then
$\ub$ is solved from \eq{eq-opg5}$_1$, where $p$ is already known.

By $\alpha(x)$ we denote an abstract optimization variable which determines the homogenized coefficients for any position $x\in\Om$.
The objective function $\Phi_\alpha(\ub)$ and the constraint function $\Psi_\alpha(p)$ are defined, as follows:
\begin{equation}\label{eq-opg6}
\begin{split}
\Phi_\alpha(\ub) & = g(\ub)\;,\\
\Psi_\alpha(p) & = - \int_{\Gamma_p^2}\Kb\nabla (p+ \bar p) \cdot \nb\;.
\end{split}
\end{equation}
The constraint function $\Psi_\alpha(p)$ expresses the fluid flow  through surface $\Gamma_p^2$ due to the pressure difference $\bar p^1 - \bar p^2$, see
the boundary condition  \eq{eq-opg2}$_2$.

Let $\Acal$ be a set of admissible designs $\alpha(x) \mapsto \Mcal(x)$.
Now the macroscopic abstract optimization problem reads:
\begin{equation}\label{eq-opg7}
\begin{split}
\min_{\alpha \in \Acal}\ & \Phi_\alpha(\ub)\\
\mbox{ s.t. } & \Psi_\alpha(p) = \ol{\Psi_0}\;,\\
 & (\ub,p) \mbox{ satisfies } \eq{eq-opg5}\;.
\end{split}
\end{equation}
This optimization problem is associated with the following inf-sup problem, 
\begin{equation}\label{eq-opg9}
\begin{split}
\min_{\alpha \in \Acal} \inf_{(\ub,p) \in V_0\times Q_0} \sup_{ \Lambda \in \RR, (\tilde\vb,\tilde q)\in V_0\times Q_0} & \Lcal(\alpha, (\ub,p),\Lambda, (\tilde\vb,\tilde q))\;,
 \end{split}
\end{equation}
with the Lagrangian function,
\begin{equation}\label{eq-opg8}
\begin{split}
\Lcal(\alpha, (\ub,p),\Lambda, (\tilde\vb,\tilde q)) & =
\Phi_\alpha(\ub) + \Lambda (\Psi_\alpha(p)-\ol{\Psi_0}) + \aOm{\ub}{\tilde\vb} - \bOm{p+\bar p}{\tilde\vb} \\
& \quad - g(\tilde\vb) + \cOm{p+\bar p}{\tilde q}\;,
\end{split}
\end{equation}
where $\Lambda\in \RR$ and $(\tilde\vb,\tilde q)\in V_0\times Q_0$ are the Lagrange multipliers associated with the constraints of the problem \eq{eq-opg7}.

For a while, let us consider the effective parameters
$\Hop = (\Aop,\Bb,\Kb)$ as the optimization variables which can be parameterized by $\alpha \in \Acal$, and assume
a given value $\Lambda \in \RR$; note that $\Lambda$ can be positive or negative depending on the desired flow augmentation, or reduction. Now the optimization
problem \eq{eq-opg7} can be rephrased as the two-criteria minimization problem,
\begin{equation}\label{eq-opH1}
\begin{split}
 \min_{
\begin{array}{c}
\Hop(\alpha) \\
\alpha \in \Acal
\end{array}
} \ &  \Fcal(\Hop(\alpha),\zb)\;,\quad\Fcal(\Hop,\zb) = \Phi(\Hop,\ub) + \Lambda\Psi(\Hop,p)\;,\\
\mbox{ s.t. } & \zb = (\ub,p) \mbox{ satisfies } \eq{eq-opg5}\;.
\end{split}
\end{equation}
For any admissible state $\zb = (\ub,p)$ such that $\Hop \mapsto \zb(\Hop)$, it holds that
$\Lcal(\Hop,\zb,\Lambda,\tilde\zb) = \Fcal(\Hop,\zb)$; in the notation employed here we use the primary dependence of the bilinear forms \eq{eq-opg4} on $\Hop$.

To solve \eq{eq-opg7}, we can proceed by linearization of $\Fcal(\Hop,\zb(\Hop))$ with respect to $\Hop$, whereby the state variables $\zb$ are related to $\Hop$ by the state problem constraint.
Therefore, we introduce the total differential $\dlt_\Hop^\tot$, such that $\dlt_\Hop^\tot \Fcal  \circ \dlt\Hop = (\dlt_\Hop \Fcal + \dlt_\zb \Fcal\circ \dlt_\Hop \zb)\circ\dlt\Hop$ with
$\dlt_\Hop \zb$ expressing the sensitivity of the state \wrt the effective parameters.

Further we shall label quantities related to a reference parameter $\Hop_0$ by subscript $_0$. The linearization of $\Fcal(\Hop,\zb(\Hop))$ at the reference
layout represented by $\Hop_0$ yields the following approximation $\wtilde{\Fcal}$ of the objective function,
\begin{equation}\label{eq-opH2}
\begin{split}
\wtilde{\Fcal}(\Hop)  =  \wtilde{\Lcal}(\Hop) & = \Lcal_0 + \dlt_\Hop^\tot \Lcal_0 \circ(\Hop-\Hop_0)\\
& = \Fcal(\Hop_0)  + \dlt_\Hop^\tot\Fcal(\Hop_0)\;,
\end{split}
\end{equation}
where $\Lcal_0 := \Lcal(\Hop_0,\zb_0,\Lambda,\tilde\zb_0) = \Phi(\Hop_0,\ub_0) + \Lambda\Psi(\Hop_0,p_0)$ because of the state $\zb_0$ consistency with $\Hop_0$, \ie
$\Hop_0\mapsto \zb_0$ by virtue of \eq{eq-opg5}. The sensitivity $\dlt_\Hop^\tot \Lcal_0$ is derived below using the adjoint state method; we obtain
\begin{equation}\label{eq-opH3}
\begin{split}
 \dlt_\Hop^\tot \Lcal_0 & = \dlt_\Hop \Phi(\ub_0) + \Lambda  \dlt_\Hop \Psi(p_0)\\
& \quad \quad + \dlt_\Hop\aOm{\ub_0}{\tilde\vb_0}-\dlt_\Hop\bOm{p_0+\bar p}{\tilde\vb_0}+\dlt_\Hop\cOm{p_0+\bar p}{\tilde q_0}\;,
\end{split}
\end{equation}
where $\tilde\zb_0 = (\tilde\vb_0,\tilde q_0)$ is the adjoint state.
Hence, using the definitions \eq{eq-opg4}, we obtain (note that $P_0 = p_0 + \bar p$),
\begin{equation}\label{eq-opH4}
\begin{split}
 \dlt_\Hop^\tot \Lcal_0 \circ(\Hop-\Hop_0) & =
\int_\Om (\Aop-\Aop_0)\eeb{\ub_0}:\eeb{\tilde\vb_0} - \int_\Om P_0 (\Bb - \Bb_0):\eeb{\tilde\vb_0}\\
& \quad + \int_\Om \nabla \tilde q_0 \cdot (\Kb-\Kb_0)\nabla P_0 - \Lambda\int_\Om \nabla \tilde p \cdot (\Kb-\Kb_0)\nabla P_0
\;,\\
 \dlt_\Hop^\tot \Lcal_0 \circ \Hop_0 & = \Lcal_0  = \Phi(\ub_0) + \Lambda \Psi(P_0)\;,
\end{split}
\end{equation}
where $\tilde p$ is a given function independent of $\Hop$, see \eq{eq-opg12}.
The expression \eq{eq-opH4}$_1$ is the straightforward consequence of \eq{eq-opH3} and \eq{eq-opg13a}, whereas
\eq{eq-opH4}$_2$ holds due to the consistency $\zb_0=\zb(\Hop_0)$ and linearity of $\Lcal$ \wrt $\Hop$.
Since $\dlt_\Hop^\tot \Fcal(\Hop_0) = \dlt_\Hop^\tot \Lcal_0$,
from \eq{eq-opH2} and using \eq{eq-opH4} we obtain
\begin{equation}\label{eq-opH5}
\begin{split}
\wtilde{\Fcal}(\Hop) & = \Lambda \ol{\Psi_0} + \dlt_\Hop^\tot \Fcal(\Hop_0)\circ \Hop \\
& =  \Lambda \ol{\Psi_0} + \int_\Om \Aop\eeb{\ub_0}:\eeb{\tilde\vb_0} - \int_\Om P_0 \Bb :\eeb{\tilde\vb_0}\\
& \quad + \int_\Om \nabla \tilde q_0 \cdot \Kb \nabla P_0 - \Lambda\int_\Om \nabla \tilde p \cdot \Kb\nabla P_0\;.
\end{split}
\end{equation}

The homogenized effective medium parameters $\Hop$ can be defined pointwise in $\Om$, assuming their moderate variation \wrt the position.
However, by virtue of the local periodicity assumption and also for practical reasons related to the numerical solutions,
we consider a decomposition of $\Om$ into a finite number of non-overlapping subdomains $\Om_e$ in which the design of microstructures and, thereby, the effective medium parameters $\Hop$ are constant, \ie $\alpha(x) = \alpha_e$ for $x \in \Om_e$.
We recall that parameters $\Hop(x) = \Hop_e = \Hop(\alpha_e)$ are defined by implicit functions of $\alpha_e$ by virtue of the homogenization problem.

From the linearized objective functional $\wtilde{\Fcal}(\Hop)$, see \eq{eq-opH5}, the following local nonlinear optimization problems associated with the macroscopic element $\Om_e$ can be deduced:
\begin{equation}\label{eq-opg16}
\begin{split}
& \min_{\alpha \in \Acal} F_e(\alpha)\\
\mbox{ where }\quad  F_e & := \int_{\Om_e}
\left[\Aop(\alpha) :: (\eeb{\ub}\otimes\eeb{\tilde\vb})- P \Bb(\alpha):\eeb{\tilde\vb}\right. \\
& \quad \left.- \Lambda\Kb(\alpha):(\nabla P \otimes\nabla\tilde p) + \Kb(\alpha):(\nabla P \otimes\nabla\tilde q)\right]
 \;,
\end{split}
\end{equation}
where $\tilde p$ is defined in the context of \eq{eq-opg12}; we recall $P = p+\bar p$ is the total pressure inolved in the original state problem formulation \eq{eq-opg1}.
Obviously, the state variables $(\ub,p)$ as well as the adjoint state   variables $(\tilde \vb,\tilde q)$ are given by the macroscopic optimization step.
Thus, the local microstructures can be optimized
for the computed macroscopic response which provides the tensors
\begin{equation}\label{eq-opg17}
\begin{split}
\eeb{\ub}\otimes\eeb{\tilde\vb}, \quad \nabla P \otimes\nabla\tilde q \;,\quad  \nabla P \otimes\nabla\tilde p \quad \mbox{ and } \quad   P \eeb{\tilde\vb}\;.
\end{split}
\end{equation}


\subsection{Adjoint responses and the sensitivity analysis}
In this section we provide details concerning the sensitivity analysis employed in the preceding section.
We consider $\alpha$ to represent a general optimization variable which is related to the effective medium parameters $\Hop$.
It is worth to note that one may also consider $\alpha \equiv \Hop$ in the context of the free material optimization (FMO).

To obtain the adjoint equation,  we consider  the optimality condition for $(\ub,p)$, thus, from \eq{eq-opg9} it follows that
\begin{equation}\label{eq-opg10}
\begin{split}
\dlt_{(\ub,p)}\Lcal(\alpha, (\ub,p),\Lambda, (\tilde\vb,\tilde q))\circ (\vb,q) & =
\dlt_\ub\Phi_\alpha(\ub;\vb) + \Lambda \dlt_p\Psi_\alpha(p; q)  \\
& \quad + \aOm{\vb}{\tilde\vb} - \bOm{q}{\tilde\vb} + \cOm{q}{\tilde q} \;,
\end{split}
\end{equation}
where
\begin{equation}\label{eq-opg11a}
\begin{split}
\dlt_\ub\Phi_\alpha(\ub;\vb) = g(\vb)\;,\quad\mbox{ and }\quad \dlt_p\Psi_\alpha(p; q) = -\int_{\Gamma_p^2} \Kb\nabla q \cdot\nb\;.
\end{split}
\end{equation}
To avoid computing of the gradient $\nabla q$ on $\Gamma_p^2\subset \pd \Om$, we consider $\tilde p \in H^1(\Om)$ such that
$\tilde p = 0$ on $\Gamma\setminus\Gamma_p^2$, while $\tilde p = 1$ on $\Gamma_p^2$, then it is easy to see that
\begin{equation}\label{eq-opg12}
\begin{split}
-\Psi_\alpha(p) & = r(p) := \cOm{p+\bar p}{\tilde p}\;,\\
-\dlt_p\Psi_\alpha(p; q) & = \dlt_p r(p;q) = \cOm{q}{\tilde p}\;.
\end{split}
\end{equation}
The optimality conditions \eq{eq-opg10} related to the state admissibility  yield
the adjoint state $(\tilde\vb,\tilde q)$ which satisfies the following identities:
\begin{equation}\label{eq-opg11b}
\begin{split}
\tilde\vb \in V_0\;:\quad \aOm{\vb}{\tilde\vb} & = - \dlt_\ub\Phi_\alpha(\ub;\vb)\quad \forall \vb \in V_0\;,\\
\tilde q \in Q_0\;:\quad \cOm{q}{\tilde q} & = \bOm{q}{\tilde\vb} - \Lambda\dlt_p\Psi_\alpha(p; q),\quad \forall q \in Q_0\;.
\end{split}
\end{equation}
These equations can be rewritten using \eq{eq-opg11a} and \eq{eq-opg12}, as follows
\begin{equation}\label{eq-opg11}
\begin{split}
\tilde\vb \in V_0\;:\quad \aOm{\vb}{\tilde\vb} & = -  g(\vb)\quad \forall \vb \in V_0\;,\\
\tilde q \in Q_0\;:\quad \cOm{q}{\tilde q} & = \bOm{q}{\tilde\vb} + \Lambda \cOm{q}{\tilde p},\quad \forall q \in Q_0\;.
\end{split}
\end{equation}
We can compute the total variation of the Lagrangian,
\begin{equation}\label{eq-opg13}
\begin{split}
\dlt_\alpha^\tot \Lcal & = \dlt_\ub g(\ub;\dlt_\alpha\ub) - \Lambda \dlt_p r(p;\dlt_\alpha p) + \dlt_\alpha g(\ub) - \Lambda  \dlt_\alpha r(p)
\\
& \quad\quad + \aOm{\dlt_\alpha\ub}{\tilde\vb} - \bOm{\dlt_\alpha p}{\tilde\vb}
+\cOm{\dlt_\alpha p}{\tilde q}\\
& \quad \quad + \dlt_\alpha\aOm{\ub}{\tilde\vb}-\dlt_\alpha\bOm{p+\bar p}{\tilde\vb}
+\dlt_\alpha\cOm{p+\bar p}{\tilde q}\;.
\end{split}
\end{equation}
If the pair $(\ub,p)$  solves the state problem and $(\tilde\vb,\tilde q)$ is its adjoint state,
\eq{eq-opg13} is equivalent to the following expression,
\begin{equation}\label{eq-opg13a}
\begin{split}
\dlt_\alpha^\tot \Lcal & = \dlt_\alpha g(\ub) - \Lambda  \dlt_\alpha r(p)\\
& \quad \quad + \dlt_\alpha\aOm{\ub}{\tilde\vb}-
\dlt_\alpha\bOm{p+\bar p}{\tilde\vb}
+\dlt_\alpha\cOm{p+\bar p}{\tilde q}\;.
\end{split}
\end{equation}
Above the shape derivatives $\dlt_\alpha$ of the bilinear forms can be rewritten in terms of the sensitivity of the homogenized coefficients.
Besides the obviously vanishing derivative $\dlt_\alpha g(\ub)=0$, it holds that
\begin{equation}\label{eq-opg15}
\begin{split}
\dlt_\alpha\aOm{\ub}{\tilde\vb}\circ\dlt_\alpha\Aop & =
\sum_e \int_{\Om_e} \dlt_\alpha\Aop\eeb{\ub}:\eeb{\tilde\vb}\;,\\
\dlt_\alpha\bOm{p+\bar p}{\tilde\vb}\circ\dlt_\alpha\Bb & =
\sum_e \int_{\Om_e} (p+\bar p) \dlt_\alpha\Bb:\eeb{\tilde\vb}\;,\\
\dlt_\alpha\cOm{p+\bar p}{\tilde q}\circ\dlt_\alpha\Kb & =
\sum_e \int_{\Om_e} \nabla \tilde q \cdot \dlt_\alpha\Kb\nabla( p+\bar p)\;,\\
\dlt_\alpha \cOm{p+\bar p}{\tilde p}\circ\dlt_\alpha\Kb & =
\sum_e \int_{\Om_e} \nabla \tilde p \cdot \dlt_\alpha\Kb\nabla( p+\bar p)\;.
\end{split}
\end{equation}

\section{Shape sensitivity of the homogenized coefficients}\label{sec-sa}

We use the shape sensitivity technique and the material derivative
approach (see
e.g. \cite{haslinger_makinen_2003:_introduction_shape_optimization,haug_choi_komkov_1986:_design_sensitivity_analysis})
to obtain the sensitivity of homogenized coefficients \eq{eq-h8}
involved in the Biot continuum \eq{eq-B1} with respect to the
configuration transformation corresponding to the  Y-periodic vector field $\vec\Vcal(y)$, $y \in Y$
so that for $y \in \Gamma$ it describes the convection of points on the
interface.  Although we pursue the general approach reported e.g. in
\cite{Rohan2003,Rohan-AMC}, below we shall discuss further details.
Using $\vec\Vcal: \overline{Y} \longrightarrow \RR^3$ defined
in $Y$, we can parameterized the ``material point'' position in
$Y$ by $z_i(y,\tau) = y_i + \tau\Vcal_i(y)$, $y \in Y$, $i=1,2,3$, where
$\tau$ is the ``time-like'' variable. Throughout the text below we
shall use the notion of the following derivatives: $\delta(\cdot)$ is
the total ({\it material}) derivative, $\delta_\tau(\cdot)$ is the
partial ({\it local}) derivative {\it w.r.t.} $\tau$.  These
derivatives are computed as the directional derivatives in the
direction of $\vec\Vcal(y)$, $y \in Y$.

Let us consider a general functional $\Phi(\phi) = \int_{Y_d}
F(\phi)$, where $\phi(y)$ corresponds to a microscopic state function,
$Y_d \subset Y$, and $F$ is a sufficiently regular operator.  Using
the chain rule differentiation, the total sensitivity of $\Phi$ is
given as
\begin{equation}\label{eq-S6}
\begin{split}
\delta \Phi(\phi) & = \delta_\phi \Phi(\phi)\circ \delta \phi + \delta_\tau \Phi(\phi) = \delta_\phi \Phi(\phi)\circ \delta \phi +
\int_{Y_d} F(\phi) \dvg\vec\Vcal + \int_{Y_d} \delta_\tau F(\phi)\;,
\end{split}
\end{equation}
where $ \delta_\tau F(\phi)$ is the shape derivative of $F(\phi)$ for
a fixed argument $\phi$. It is desirable to eliminate the sensitivity
$\delta \phi$ from the sensitivity formula; for this the microscopic
state problem are used without need for solving any adjoint problem.

\subsection{Shape sensitivity analysis of the permeability}\label{sec-defsaK}

We are interested in the influence of variation of the shape of the
interface $\Gamma$ on the homogenized permeability $K_{ij}$ defined in
\eq{eq-h8}.  Thus, according to \eq{eq-S6} with $F \equiv I$, the
differentiation of $K_{ij}$ yields
\begin{equation}\label{eq-S7}
\begin{split}
\delta K_{ij} = \delta_\tau\intY_{Y_c}\psi_j^i + \intY_{Y_c}\delta\psi_j^i\;.
\end{split}
\end{equation}
To eliminate the dependence on $\delta\psi_j^i$ in the last integral
of the above expression, we differentiate \eq{eq-S3}$_1$ which yields
\begin{equation}\label{eq-S8}
\begin{split}
\intY_{Y_c}\nabla_y\delta\psibf^i:\nabla_y \vb -
\intY_{Y_c}\delta\pi^i \nabla\cdot\vb + \delta_\tau\left(
\intY_{Y_c}\nabla_y\psibf^i:\nabla_y \vb -
\intY_{Y_c}\pi^i \nabla\cdot\vb\right) = \delta_\tau\intY_{Y_c}v_i\;.
\end{split}
\end{equation}
We proceed by substitution $\vb = \psibf^j$ in \eq{eq-S8} and use the
incompressibility constraint \eq{eq-S3}$_2$. Then, from \eq{eq-S3}$_1$
evaluated for $k= j$ and substituting there $\vb = \delta \psibf^i$ we
get
\begin{equation}\label{eq-S9}
\begin{split}
\intY_{Y_c} \delta\psi_j^i & = \intY_{Y_c}\nabla_y\delta\psibf^i:\nabla_y\psibf^j
- \intY_{Y_c} \pi^j \nabla_y \cdot \delta\psibf^i \\
& = \delta_\tau \left(
\intY_{Y_c} \psi_i^j - \intY_{Y_c} \nabla_y\psibf^i:\nabla_y\psibf^j
+  \intY_{Y_c} \pi^i \nabla_y \cdot \psibf^j
\right) + \delta_\tau \intY_{Y_c} \pi^j \nabla_y \cdot \psibf^i
\;,
\end{split}
\end{equation}
where we employed differentiated \eq{eq-S3}$_2$ with $q = \pi^j$
afterwards and used \eq{eq-S8}.  Now we can substitute into \eq{eq-S7}
which yields
\begin{equation}\label{eq-S10}
\begin{split}
\delta K_{ij} & = \delta_\tau  \intY_{Y_c} \left(  \psi_i^j + \psi_j^i
- \nabla_y\psibf^i:\nabla_y\psibf^j\right) + \delta_\tau  \intY_{Y_c}\left(
\pi^i \nabla_y \cdot \psibf^j +\pi^j \nabla_y \cdot \psibf^i\right)\;.
\end{split}
\end{equation}
To compute the shape derivative $\delta_\tau$, we shall employ the following expressions:
\begin{equation}\label{eq-S11a}
\begin{split}
\delta_\tau \intY_{Y_c}\psi_i^j & =
\intY_{Y_c}\psi_i^j \left(\nabla_y\cdot\vec\Vcal- \intY_Y\nabla_y\cdot \vec\Vcal\right)\;,\\
\delta_\tau \intY_{Y_c}\nabla_y\psibf^j:\nabla_y\psibf^i & =
\intY_{Y_c}\left(
\nabla_y\psibf^j:\nabla_y\psibf^i \nabla_y\cdot\vec\Vcal-
\pd_l^y\Vcal_r \pd_r^y\psi_k^i\pd_l^y\psi_k^j - \pd_l^y\Vcal_r \pd_r^y\psi_k^j\pd_l^y\psi_k^i\right) \\
& \quad - \intY_{Y_c}\nabla_y\psibf^j:\nabla_y\psibf^i \intY_Y\nabla_y\cdot \vec\Vcal\;,\\
\delta_\tau \intY_{Y_c} \nabla_y\cdot \psibf^j
& = \intY_{Y_c} \nabla_y\cdot \psibf^j(\nabla_y \cdot \vec\Vcal - \intY_Y\nabla_y\cdot \vec\Vcal)- \intY_{Y_c} \pd_k^y\Vcal_r \pd_r^y\psi_k^j\;.
\end{split}
\end{equation}

Finally, using \eq{eq-S10} and \eq{eq-S11a} we get the following expression
\begin{equation}\label{eq-S11}
\begin{split}
\delta K_{ij} & = \intY_{Y_c}\left( \psi_i^j + \psi_j^i- \nabla_y\psibf^i:\nabla_y\psibf^j
+\pi^i \nabla_y \cdot \psibf^j +\pi^j \nabla_y \cdot \psibf^i\right)\left(\nabla_y\cdot \vec\Vcal - \intY_Y\nabla_y\cdot \vec\Vcal\right)\\
& \quad + \intY_{Y_c} \left(
\pd_l^y\Vcal_r \pd_r^y\psi_k^i\pd_l^y\psi_k^j + \pd_l^y\Vcal_r \pd_r^y\psi_k^j\pd_l^y\psi_k^i -
\pi^i \pd_k^y\Vcal_r \pd_r^y\psi_k^j
-  \pi^j \pd_k^y\Vcal_r \pd_r^y\psi_k^i
\right)\;.
\end{split}
\end{equation}
Although $\delta K_{ij}$ depends formally on $\vec\Vcal$, this field can
be constructed arbitrarily in the interior of $Y_c$ without any
influence on $\delta K_{ij}$.

\subsection{Deformation sensitivity analysis of the poroelasticity coefficients}
\label{sec-defsaPE}

In contrast with the sensitivity of the permeability coefficients
which depend on the shape of $\pd Y_c$ only, the poroelasticity may
depend on the strain associated with $\vec\Vcal(y)$, $y \in Y_m$.

By virtue of the shape sensitivity based on the domain parametrization
\cite{haslinger_makinen_2003:_introduction_shape_optimization,haug_choi_komkov_1986:_design_sensitivity_analysis}, the
following formulae hold,
\begin{equation}\label{eq-S15}
\begin{split}
\delta_\tau \aYm{\ub}{\vb} &= \intY_{Y_m}
D_{irks}\left(\delta_{rj}\delta_{sl} \nabla_y\cdot \vec\Vcal - \delta_{jr}\pd_s^y \Vcal_l - \delta_{ls}\pd_r^y \Vcal_j\right)
e_{kl}^y(\ub) e_{ij}^y(\vb)\\
& \quad - \aYm{\ub}{\vb}\intY_{Y} \nabla_y\cdot \vec\Vcal\;,\\
\end{split}
\end{equation}
\begin{equation}\label{eq-S16}
\begin{split}
\delta_\tau \intY_{Y_m} \nabla_y\cdot\vb & = \intY_{Y_m}\left(
\nabla_y \cdot \vec\Vcal \nabla_y \cdot \vb - \pd_i^y \Vcal_k \pd_k^y \vb_i
\right) - \intY_{Y_m} \nabla_y\cdot\vb\intY_{Y} \nabla_y\cdot \vec\Vcal\;.
\end{split}
\end{equation}
To differentiate \eq{eq-h5a}, we also employ $\delta_\tau \Pi_k^{ij} = \Vcal_j \delta_{ik}$.  We shall need the
sensitivity identity obtained by differentiation in \eq{eq-h5b} which
yields
\begin{equation}\label{eq-S17}
\begin{split}
\aYm{\delta \omegabf^P}{\vb} & = \delta_\tau \left(\intY_{Y_m} \nabla_y\cdot \vb -
\aYm{\omegabf^P}{\vb}\right)\;.
\end{split}
\end{equation}
We can now differentiate the expressions for the homogenized
coefficients given in \eq{eq-h8}. First we obtain the sensitivity of
$A_{ijkl}$, whereby \eq{eq-h5a} is employed:
\begin{equation}\label{eq-S18}
\begin{split}
\delta A_{ijkl} = \delta_\tau
\aYm{\omegabf^{ij}+\Pibf^{ij}}{\omegabf^{kl}+\Pibf^{kl}}
+ \aYm{\delta_\tau\Pibf^{ij}}{\omegabf^{kl}+\Pibf^{kl}}
+\aYm{\omegabf^{ij}+\Pibf^{ij}}{\delta_\tau \Pibf^{kl}}\;.
\end{split}
\end{equation}
The sensitivity of $C_{ij}$ is obtained, as follows:
\begin{equation}\label{eq-S19}
\begin{split}
\delta C_{ij} & = \aYm{\delta \omegabf^P}{\Pibf^{ij}}
+\delta_\tau\aYm{\omegabf^P}{\Pibf^{ij}} +\aYm{\omegabf^P}{\delta_\tau\Pibf^{ij}}\\
& = \delta_\tau\aYm{\omegabf^P}{\omegabf^{ij}} -
\delta_\tau\intY_{Y_m}\nabla_y\cdot\omegabf^{ij} +\delta_\tau\aYm{\omegabf^P}{\Pibf^{ij}} +\aYm{\omegabf^P}{\delta_\tau\Pibf^{ij}}\;,
\end{split}
\end{equation}
where we used \eq{eq-S17} with $\vb$ substituted by $\omegabf^{ij}$
and the first \rhs integral in \eq{eq-S19}$_1$ was rewritten using
\eq{eq-h5a}.

The sensitivity of $N$ is derived using \eq{eq-S17} with $\vb$
substituted by $\omegabf^P$; thus, we obtain
\begin{equation}\label{eq-S20}
\begin{split}
\delta N & = 2\aYm{\delta\omegabf^P}{\omegabf^P} + \delta_\tau \aYm{\omegabf^P}{\omegabf^P} = 2 \delta_\tau\intY_{Y_m}\nabla_y\cdot\omegabf^P - \delta_\tau \aYm{\omegabf^P}{\omegabf^P}\;.
\end{split}
\end{equation}
Finally, the sensitivity of the volume fraction is 
\begin{equation}\label{eq-S20a}
\begin{split}
\delta \phi = \intY_{Y_c} \nabla_y \cdot \vec\Vcal - \phi \intY_{Y} \nabla_y \cdot \vec\Vcal\;.
\end{split}
\end{equation}

To summarize, the sensitivity formulae \eq{eq-S18}-\eq{eq-S20} can be
evaluated using expressions \eq{eq-S15} and \eq{eq-S16}. Using
\eq{eq-S19}, \eq{eq-S20} and \eq{eq-S20a} we obtain the sensitivities
of $\Bb$ and $M$, i.e.
\begin{equation}\label{eq-S20b}
\begin{split}
\delta \Bb = \delta \Cb + \delta \phi \Ib\;, \quad \delta M =
\delta N + \gamma \delta \phi\;.
\end{split}
\end{equation}

\begin{remark}\label{rem2}
By virtue of \eq{eq-B3}, the sensitivities of $\Cop$, $\Sb$ and $K$ can be expressed in terms of $\delta \Aop$, $\delta \Bb$, and $\delta M$.
The following relationships are straightforward:
\begin{equation}\label{eq-S21b}
\begin{split}
\dlt \Cop & = \dlt K (\Iop - \Bb\otimes\Bb)\Aop^{-1} - K(\dlt\Bb\otimes\Bb + \Bb\otimes\dlt\Bb)\Aop^{-1} \\
& \quad -  K (\Iop - \Bb\otimes\Bb)\Aop^{-1}(\dlt \Aop) \Aop^{-1}\;,
\end{split}
\end{equation}
where
\begin{equation}\label{eq-S22b}
\begin{split}
\dlt K & = -K^2 \dlt(1/K)\;,\\
\dlt(1/K) & = \dlt M + (\dlt\Bb\otimes\Bb + \Bb\otimes\dlt\Bb):\Aop^{-1} -  \Bb\otimes\Bb :\Aop^{-1}(\dlt \Aop) \Aop^{-1}\;.
\end{split}
\end{equation}
We used $\dlt(\Aop^{-1}) = -\Aop^{-1}(\dlt \Aop) \Aop^{-1}$.
\end{remark}

\section{Design parametrization}\label{sec-param}

The design is parameterized by control points $\{\Pb^\imx\}$ of a spline box with a multi-index $\imx = (i_1,i_2,i_3)$.
The box is expressed in terms of spline basis functions
\begin{equation}
\Bcal^\imx : \RR^3 \rightarrow \RR\;,\ \Bcal^\imx(\tb) = \Bcal^{i_1} (t_1)\Bcal^{i_2} (t_2)\Bcal^{i_3} (t_3)
\end{equation}
with $\tb = (t_1,t_2,t_3) \in [0,1]^3$ and hence is a tensor product volume.
The functions $\Bcal^{i_k}$ are B-spline basis functions, which are defined with respect to clamped knot vectors.
Thus, we define
\begin{equation}\label{eq-spl1}
\begin{split}
\xb(\tb,\alphabf) = \sum_\imx \Pb^{\,\imx}(\alphabf^\imx) \Bcal^\imx(\tb)\;,\ \Pb^{\,\imx}(\alphabf^\imx) = \hat\Pb^{\,\imx} + \alphabf^\imx\;,
\end{split}
\end{equation}
where $\hat\Pb^{\,\imx}$ are the initial control points and $\alphabf^\imx$ are parameter which serve as design variables in all the optimization problems described in sections \ref{sec-optprob}
and modify the control points of the spline.
For better readability we will omit the argument $\alphabf^\imx$ of $\Pb^\imx$ below.

A reference cubic spline box is defined by a lattice of the control points $\hat\Pb^{\,\imx}$ with $i_j \in \{1,\ldots,m_j\}$, where $m_j$ is given by the spline order and the number of segments in the $j$-th direction (see figure \ref{fig-spb}).
The initial control points are placed at the Greville abscissae possibly resulting in a nonregular lattice.

Lattice periodicity is considered for all control points at the boundary of the spline box.
We call an index $\imx = (i_1,i_2,i_3)$ with $i_k = 1$ for at least one $k \in \{1,2,3\}$ a master index labeled by $\imx^M$
and an index with $i_k = m_k$ for at least one $k \in \{1,2,3\}$ a slave index labeled by $\imx^S$.
Simply choosing $\alphabf^{\imx^S}=\alphabf^{\imx^M}$ would only permit for deformations where the eight outermost vertices still define a cube.
To account for more flexible designs we add additional design variables $\betabf_k\in\RR^3,k=1,2,3,$ and define
\begin{equation}
\betabf^\imx_k =
\begin{cases}
\betabf_k &\text{ if } i_k = m_k\;, \\
0 &\text{ else}\;,
\end{cases}
\end{equation}
and
\begin{equation}
\alphabf^\imx = \tilde\alphabf^\imx + \sum_{k=1}^3 \betabf^\imx_k\;.
\end{equation}
Thus, each $\betabf_k$ translates one (slave) face of the lattice without deforming it allowing for sheared boxes as well.
The lattice periodicity is then established by
\begin{equation}\label{eq-perbc}
\Pb^{\imx^S} = \Pcal_\# \Pb^{\imx^M}\;,
\end{equation}
where $\Pcal_\#$ ensures $\tilde\alphabf^{\imx^S}=\tilde\alphabf^{\imx^M}$.

\begin{figure}[tp]
\begin{center}
  \includegraphics[width=.79\textwidth]{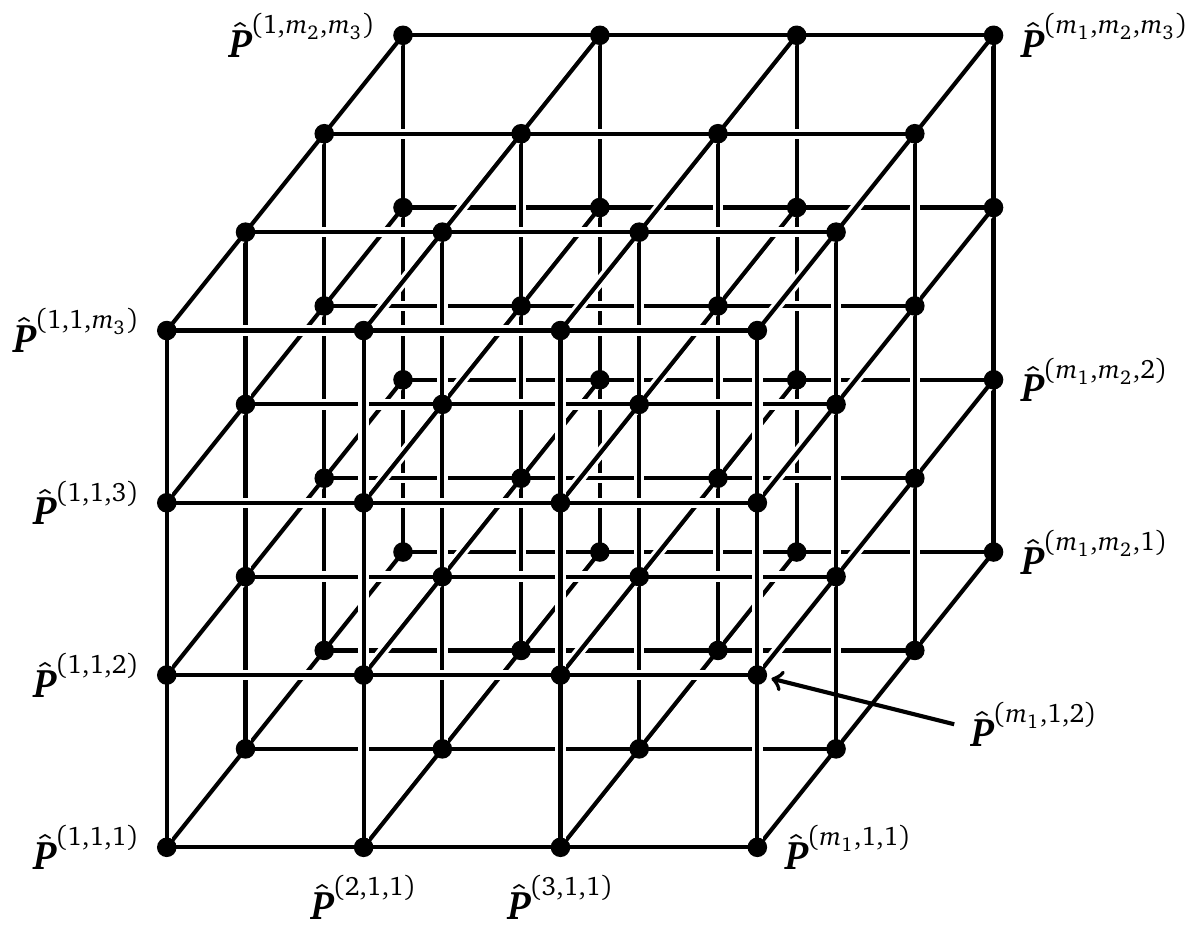}
  \caption[labelInTOC]{A spline box for $m_j=4, j=1,2,3,$ resulting in 64 control points.}
  \label{fig-spb}
\end{center}
\end{figure}

For the two-scale setting we also want to allow for rotation of the microscopic unit cell.
We introduce additional parameters $\thetabf\in [0;\tfrac{\pi}{2}]^3$ for rotation around the three coordinate axes.
However, we do not rotate the spline box directly but rather the homogenized coefficients by
\begin{equation}\label{eq-rot}
\Hop_r = \Rb(\thetabf)\; \Hop\; \Rb(\thetabf)^\top,
\end{equation}
where $\Rb$ is a corresponding rotation matrix.

\subsection{Regularity preserving injectivity conditions}
As already mentioned in the introduction, we do not want to remesh or refine the initial finite element mesh in our numerical examples.
Thus, we have to prevent the mesh from degeneration due to design changes and hence, we have to ensure injectivity of the mapping defined in \eq{eq-spl1}.
For this some notations as well as a few geometrical concepts are introduced.
Let $\imx_k^+ = \imx + e_k$ and $\imx_k^- = \imx - e_k$, $k=1,2,3$, where $e_1=(1,0,0),e_2=(0,1,0),e_3=(0,0,1)$.
For $k\in\{1,2,3\}$ we define the index set
\begin{equation}
I(k) = \{\imx\; |\; i_j=1,\ldots,m_j \text{ for } j\neq k \text{ and } i_k=1,\ldots,m_k-1\}\;.
\end{equation}
Following the approach in \cite{Xu2013} we define for $\imx\in I(k)$ the finite differences
\begin{equation}\label{eq-delta}
\Delta^\imx_k = \Pb^{\imx_k^+} - \Pb^{\imx}
\end{equation}
of two adjacent control points.
Then the first derivative of $\xb$ in \eq{eq-spl1} with respect to $t_k$ may be expressed as
\begin{equation}
\delta_{t_k} \xb = \sum_{\imx\in I(k)} \omega^\imx \Delta^\imx_k \Bcal^{\imx_k^-}(\tb)\;,
\end{equation}
where $\omega^\imx$ is a positive factor independent of the control points.
\begin{e-definition}
Let the \emph{convex hull} of a subset $X$ of a vector space $V$ be defined by
\begin{equation}
\conv X = \left\{ \sum_{i=1}^n \lambda_i x_i \;\big|\; x_i \in X, n\in\mathbb{N}, \sum_{i=1}^n \lambda_i=1, \lambda_i\geq 0 \right\}\;.
\end{equation}
The \emph{conical hull} of a subset $X$ of a vector space $V$ is given by
\begin{equation}
\coni X = \left\{ \sum_{i=1}^n \lambda_i x_i \;\big|\; x_i \in X, n\in\mathbb{N}, \lambda_i\geq 0 \right\}\;.
\end{equation}
A subset $X$ of a vector space $V$ is called \emph{cone}, if
\begin{equation}
x\in X, \lambda \in \RR, \lambda \geq 0 \Rightarrow \lambda x \in X\;.
\end{equation}
Finally we denote by $X^*$ the \emph{blunt cone} $X\setminus\{0\}$.
\end{e-definition}
Note that by this definition each cone or conical hull contains the point of origin and a conical hull is always a convex cone.

\begin{e-definition}\label{def-cot}
Two cones $C_1,C_2$ are said to be \emph{transverse} if $C_1\cup -C_1$ and $C_2\cup -C_2$ intersect only at $\{0\}$.
Three cones $C_1,C_2,C_3$ are said to be \emph{cotransverse} if the origin is a vertex of the convex hull of $C_1,C_2,C_3$ and the cone $C_i$ and the convex hull of the cones $C_j,C_k$ are transverse for all $\{i,j,k\}=\{1,2,3\}$.
\end{e-definition}
We now define cones generated by the forward finite differences of adjacent control points in each direction $k$ by
\begin{equation}\label{eq-ck}
C_k = \coni\{\Delta^\imx_k\; |\; \imx\in I(k)\}\;.
\end{equation}
Using the previous definitions we are able to claim the following statement.

\begin{e-proposition}\label{pro-inj}
Let the mapping \eq{eq-spl1} be at least $C^1$ and the boundary surfaces for fixed $\alphabf$ not intersect and have no self-intersection point.
Let further
\begin{itemize}
  \item[(A1)] $\Delta^\imx_k\neq 0$ for all $\imx, k$ and
  \item[(A2)] the cones $C_1,C_2,C_3$ be cotransverse.
\end{itemize}
Then the mapping is injective on $[0,1]^3$.
\end{e-proposition}

\begin{proof}
See \cite[Proposition 3.4]{Xu2013}.
\end{proof}
Next we want to derive sufficient conditions for $(A1)$ and $(A2)$.
As a consequence of $(A2)$ the blunt cones $C_k^* (k=1,2,3)$ are pairwise disjoint.
As the cones $C_k$ are also convex, we can conclude by the hyperplane separation theorem that there exist three planes separating the blunt cones $C_k^*$ from each other.
By the same argument we deduce the existence of planes separating one blunt cone from the convex hull of the other two.
This also holds for the mirrored cones $-C_k^*$ with the same planes as the separated sets just switch sides or signs in the formula, respectively.

To obtain sufficient conditions we first define four planes
\begin{equation}\label{eq-sp1}
\begin{split}
H_1(\xb) = x_1 - x_2 - x_3\;,\quad & H_2(\xb) = x_1 - x_2 + x_3\;, \\
H_3(\xb) = x_1 + x_2 - x_3\;,\quad & H_4(\xb) = x_1 + x_2 + x_3\;,
\end{split}
\end{equation}
and cones
\begin{equation}\label{eq-c0}
\begin{split}
C_1^0 &= \left\{\xb\in\RR^3 : H_1(\xb) > 0, H_2(\xb) > 0, H_3(\xb) > 0, H_4(\xb) > 0\right\} \cup \{0\}\,, \\
C_2^0 &= \left\{\xb\in\RR^3 : H_1(\xb) < 0, H_2(\xb) < 0, H_3(\xb) > 0, H_4(\xb) > 0\right\} \cup \{0\}\,, \\
C_3^0 &= \left\{\xb\in\RR^3 : H_1(\xb) < 0, H_2(\xb) > 0, H_3(\xb) < 0, H_4(\xb) > 0\right\} \cup \{0\}\,,
\end{split}
\end{equation}
see also figure \ref{fig-cot}.

\begin{figure}[tp]
\begin{center}
  \begin{subfigure}[t]{.45\textwidth}
    \includegraphics[height=5.5cm]{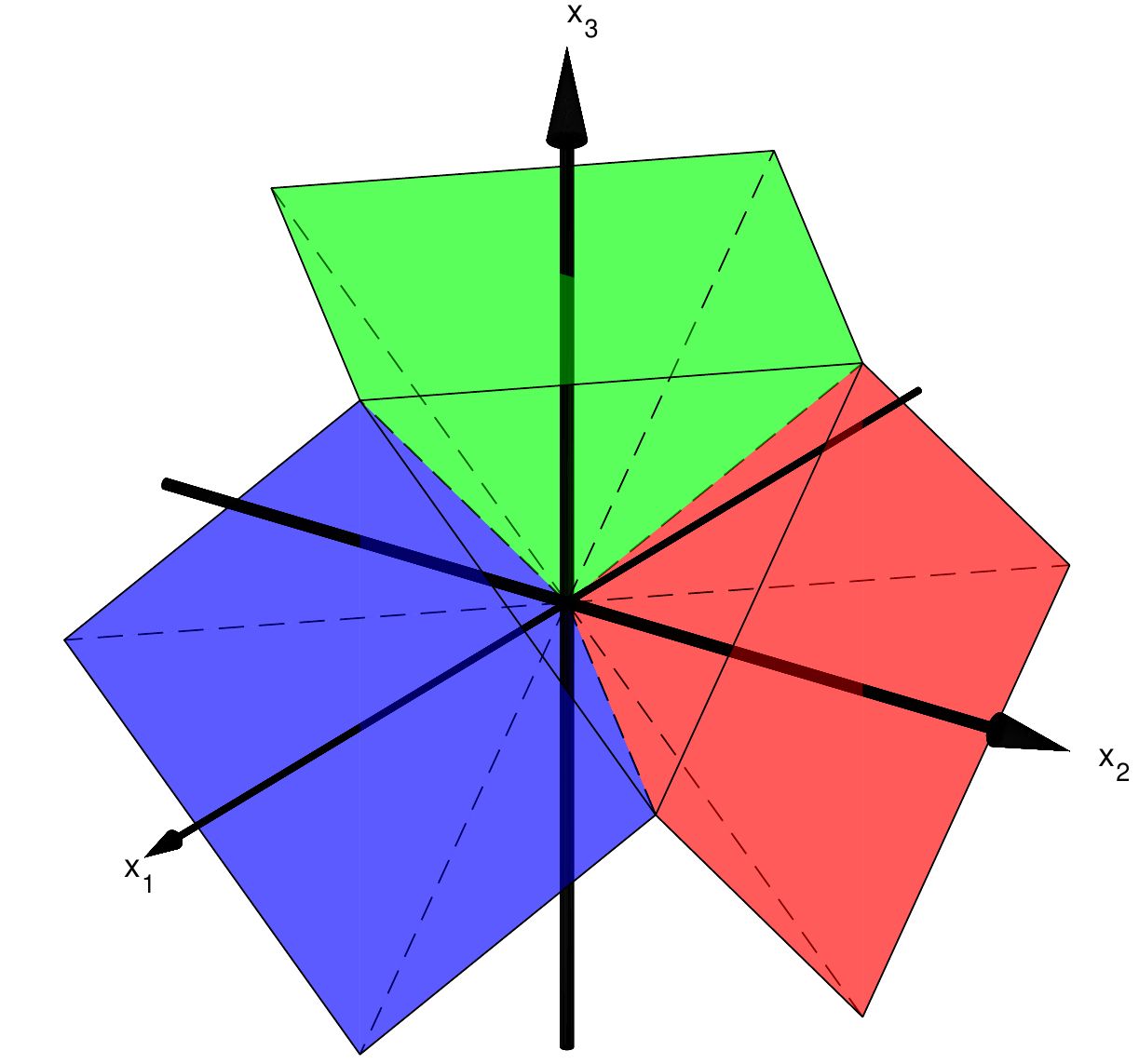}
  \end{subfigure}
  \begin{subfigure}[t]{.45\textwidth}
    \includegraphics[height=5.5cm]{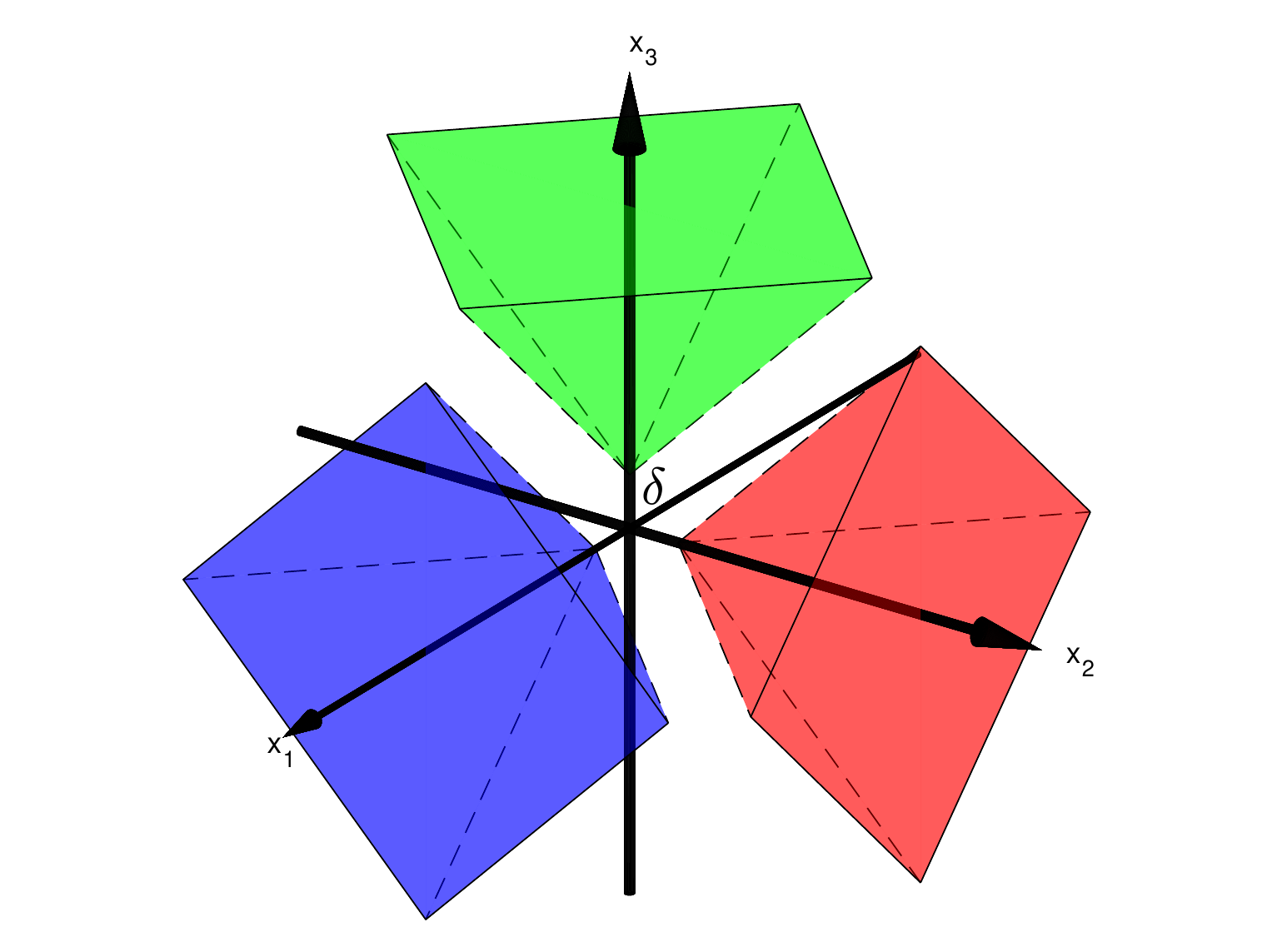}
  \end{subfigure}
  \caption[labelInTOC]{left: the cotransverse cones $C_1^0$ (blue), $C_2^0$ (red) and $C_3^0$ (green). right: the regularized cones with parameter $\delta$.}
  \label{fig-cot}
\end{center}
\end{figure}

\begin{lemma}\label{lem-co}
The cones $C_1^0,C_2^0,C_3^0$ are cotransverse.
\end{lemma}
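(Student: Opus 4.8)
I need to check two things: first, that the origin is a vertex of $\conv\{C_1^0,C_2^0,C_3^0\}$; and second, that each cone $C_i^0$ is transverse to the convex hull of the other two. The cones are defined explicitly by sign conditions on the four linear functionals $H_1,\ldots,H_4$ in \eq{eq-sp1}, so everything should reduce to elementary linear algebra on these sign patterns.

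For the vertex property, I would exhibit a single linear functional that is strictly positive on all three cones (minus the origin). Inspecting the sign patterns in \eq{eq-c0}, I notice that $H_4(\xb)>0$ holds on $C_1^0$, $C_2^0$, and $C_3^0$ alike (on the nonzero parts). Hence $H_4$ is a supporting functional vanishing only at $0$ on the union, which shows $0$ is an extreme point — a vertex — of the convex hull. This also immediately gives that no nonzero point of any $C_i^0$ can be written with a nonpositive $H_4$-value, which will be useful for the transversality step.

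For transversality, fix a splitting $\{i,j,k\}=\{1,2,3\}$ and show $C_i^0 \cup -C_i^0$ meets $\conv(C_j^0 \cup C_k^0) \cup -\conv(C_j^0 \cup C_k^0)$ only at $0$. The clean way is to find, for each such splitting, a linear functional $H$ from the list \eq{eq-sp1} that is strictly positive on $C_i^{0*}$ and strictly negative on $\conv(C_j^0,C_k^0)^*$ (or vice versa); such a functional also separates the mirrored sets after a sign flip, so both $C_i^0$ and $-C_i^0$ are separated from both $\pm\conv(C_j^0,C_k^0)$, forcing the intersection to be $\{0\}$. Checking the three cases against the sign tables in \eq{eq-c0}: taking $i=1$, $H_1$ is positive on $C_1^{0*}$ and negative on both $C_2^{0*}$ and $C_3^{0*}$, hence negative on their conical/convex hull; taking $i=2$, one verifies $H_3$ (positive on $C_1^0,C_2^0$, sign-opposite on $C_3^0$) or rather $H_2$ does the job by being positive on $C_1^0,C_3^0$ and negative on $C_2^0$; and symmetrically for $i=3$. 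I would tabulate the four sign vectors for each cone and read off in each case which $H_\ell$ isolates the chosen cone.

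The only mild subtlety is that transversality in Definition \ref{def-cot} concerns $C_i$ against the \emph{convex hull} of $C_j,C_k$, not just the two cones separately; but since a linear functional that is strictly negative on both $C_j^{0*}$ and $C_k^{0*}$ is automatically strictly negative on every nontrivial convex combination, the separating-functional argument passes to the convex hull with no extra work. The potential obstacle is being careful that the sign patterns in \eq{eq-c0} are not fully symmetric across the three cones — $C_1^0$ uses all strict ``$>0$'' while $C_2^0,C_3^0$ mix signs — so I would double-check each of the three transversality cases individually rather than invoking symmetry, and confirm that the chosen functional is strictly signed (not merely nonnegative) on the relevant blunt cones so that the intersection collapses exactly to $\{0\}$.
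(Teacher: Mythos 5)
Your overall strategy --- reducing everything to the sign patterns of $H_1,\dots,H_4$ from \eq{eq-sp1} on the cones \eq{eq-c0} --- is the same as the paper's, and your vertex argument via $H_4$ is correct, indeed cleaner than the paper's explicit description of the convex hull. But your transversality step contains a genuine logical gap. Definition~\ref{def-cot} requires $(C_i^0\cup -C_i^0)\cap(S\cup -S)=\{0\}$ with $S=\conv\{C_j^0,C_k^0\}$, i.e.\ \emph{four} pairwise intersections must be trivial. A single functional $H$ with $H>0$ on $(C_i^0)^*$ and $H<0$ on $S^*$ kills only two of them, namely $(C_i^0)^*\cap S^*$ and $(-C_i^0)^*\cap(-S)^*$. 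On the mixed pairs it gives no contradiction at all: $H$ is strictly positive on both $(C_i^0)^*$ and $(-S)^*$, and strictly negative on both $(-C_i^0)^*$ and $S^*$. So your claim that such a functional ``separates both $C_i^0$ and $-C_i^0$ from both $\pm\conv(C_j^0,C_k^0)$'' is false as stated; for $i=1$, the functional $H_1$ alone cannot exclude a nonzero point of $C_1^0\cap(-S)$, since $H_1$ is positive on both sets away from the origin.

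The repair is immediate, and you already hold the needed ingredient: you observed that $H_4>0$ on every nonzero point of each cone, hence (by convexity of the sign condition) $H_4>0$ on $S^*$ and $H_4<0$ on $(-S)^*$. Since $H_4>0$ on $(C_i^0)^*$, this empties $(C_i^0)^*\cap(-S)^*$, and symmetrically $(-C_i^0)^*\cap S^*$. Each transversality case therefore needs \emph{two} functionals, $H_i$ and $H_4$ --- which is exactly what the paper's proof encodes implicitly: its formula for $S=\conv\{C_2^0,C_3^0\}$ records both $H_1<0$ and $H_4>0$ on $S^*$, and the verification $(C_1^0\cup-C_1^0)\cap(S\cup -S)=\{0\}$ uses $H_1$ for the matched-sign pairs and $H_4$ for the mixed ones. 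Your identification of the isolating functionals ($H_1$, $H_2$, $H_3$ for $i=1,2,3$ respectively) is correct; with the $H_4$ argument added to each case, your proof closes and otherwise follows the same route as the paper's.
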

\begin{proof}
It is easy to see that the origin is a vertex of the convex hull of the cones
\begin{equation*}
\begin{split}
\conv\{C_1^0,\: &C_2^0, C_3^0\} = \\
&\left\{\xb\in\RR^3 : H_1(\xb) \neq 0, H_2(\xb) \neq 0, H_3(\xb) \neq 0, H_4(\xb) > 0\right\} \cup \{0\}\;.
\end{split}
\end{equation*}
Now, without loss of generality let $i=1$.
Let further
\begin{equation*}
\begin{split}
S := &\conv\{ C_2^0, C_3^0\} \\
= &\left\{\xb\in\RR^3 : H_1(\xb) < 0, H_2(\xb) \neq 0, H_3(\xb) \neq 0, H_4(\xb) > 0\right\} \cup \{0\} \;.
\end{split}
\end{equation*}
Then $(C_1^0\cup -C_1^0) \cap (S\cup -S) = \{0\}$ and thus $C_1^0$ and $S$ are transverse.
For $i\in\{2,3\}$ we get a similar result and hence $C_1^0, C_2^0, C_3^0$ are cotransverse.
\end{proof}

Then obviously the cones $C_1,C_2,C_3$ defined by \eq{eq-c0} are cotransverse if
\begin{equation}\label{eq-incl}
C_k\subset C_k^0,\ k=1,2,3\;.
\end{equation}
\begin{lemma}\label{lem-cot}
$C_1, C_2, C_3$ are cotransverse if
\begin{equation}\label{eq-convcot}
\conv\{\Delta^\imx_k\; |\; \imx\in I(k)\} \subset C_k^0,\ k=1,2,3\;.
\end{equation}
\end{lemma}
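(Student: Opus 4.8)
The plan is to reduce the assertion to the single inclusion $C_k \subset C_k^0$ for $k=1,2,3$. Indeed, the implication already recorded at \eq{eq-incl} states that cotransversality of $C_1,C_2,C_3$ follows once each $C_k$ is contained in the corresponding $C_k^0$, and Lemma~\ref{lem-co} supplies the cotransversality of $C_1^0,C_2^0,C_3^0$. Hence the entire content of the lemma lies in passing from the convex-hull hypothesis \eq{eq-convcot} to the conical-hull inclusion \eq{eq-incl}; I would organize the proof around exactly this passage.

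First I would verify that each $C_k^0$ defined in \eq{eq-c0} is a convex cone. Each of the half-spaces $\{\xb : H_i(\xb)>0\}$ and $\{\xb : H_i(\xb)<0\}$, with the linear $H_i$ from \eq{eq-sp1}, has its bounding hyperplane through the origin and is therefore an open convex cone; the finite intersection defining the nonzero part of $C_k^0$ is then again an open convex cone. Adjoining $\{0\}$ preserves the cone property, since scaling by $\lambda>0$ maps the open part into itself while $\lambda=0$ sends everything to the origin, and it preserves convexity, because the only new segments to check are those joining the origin to a point of the open part, and these stay in $C_k^0$ precisely because the open part is itself a cone. Thus each $C_k^0$ is a convex cone.

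Next I would note that \eq{eq-convcot} gives $\{\Delta^\imx_k : \imx\in I(k)\} \subset \conv\{\Delta^\imx_k : \imx\in I(k)\} \subset C_k^0$, so every generator of $C_k$ already lies in $C_k^0$. Since $C_k=\coni\{\Delta^\imx_k : \imx\in I(k)\}$ is, as observed right after the definition of the conical hull, a convex cone, and in fact the smallest convex cone containing its generators (any convex cone containing a set must contain all finite nonnegative combinations of its elements), the containment of the generators in the convex cone $C_k^0$ forces $C_k\subset C_k^0$. This is exactly \eq{eq-incl}, and combining it with Lemma~\ref{lem-co} via the implication \eq{eq-incl} completes the argument.

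The only genuinely non-routine step is the verification that $C_k^0$ is a convex cone, and within that the observation that adjoining the origin to an open convex cone keeps the set convex; this is a short direct check on segments through the origin. Everything else is definitional bookkeeping contrasting conical and convex hulls, together with the cited facts \eq{eq-incl} and Lemma~\ref{lem-co}.
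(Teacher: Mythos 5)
Your proposal is correct and follows essentially the same route as the paper: reduce the lemma to the inclusion $C_k\subset C_k^0$ of \eq{eq-incl} and then invoke Lemma~\ref{lem-co}, with the passage from \eq{eq-convcot} to \eq{eq-incl} resting on $C_k^0$ being a convex cone. The paper compresses this into the single chain $C_k=\coni\conv\{\Delta^\imx_k\}\subset\coni C_k^0=C_k^0$, whereas you unpack the same facts (convex-cone property of $C_k^0$, conical hull as smallest convex cone containing the generators) explicitly; the content is identical.
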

\begin{proof}
For $\imx\in I(k)$ and $k\in\{1,2,3\}$ we have
\begin{equation*}
C_k = \coni\{\Delta^\imx_k\; |\; \imx\in I(k)\} = \coni\conv\{\Delta^\imx_k\; |\; \imx\in I(k)\} \subset \coni C_k^0 = C_k^0\;,
\end{equation*}
i.\;e. \eq{eq-incl} holds.
\end{proof}

We note that conditions \eq{eq-incl} are sufficient but not necessary for $(A2)$ to hold.
For example they forbid rotations of the initial spline box whereas the cones still would be cotransverse.
However they can be realized by linear inequalities which we will see later on.
The latter are not only easy to handle in an optimization algorithm but also there exist algorithms assuring strict feasibility during the optimization procedure up to a certain tolerance.
But still the constraint \eq{eq-convcot} cannot be treated by standard optimization software directly as the cones $C_k^0$ $(k=1,2,3)$ are not closed.
As a remedy we introduce a regularization parameter $\delta > 0$ and define
\begin{equation}\label{eq-c0-delta}
\begin{split}
C_1^\delta &= \left\{\xb\in\RR^3 : H_1(\xb) \geq \phantom{-}\delta, H_2(\xb) \geq \phantom{-}\delta, H_3(\xb) \geq \phantom{-}\delta, H_4(\xb) \geq \delta\right\}\,, \\
C_2^\delta &= \left\{\xb\in\RR^3 : H_1(\xb) \leq -\delta, H_2(\xb) \leq -\delta, H_3(\xb) \geq \phantom{-}\delta, H_4(\xb) \geq \delta\right\}\,, \\
C_3^\delta &= \left\{\xb\in\RR^3 : H_1(\xb) \leq -\delta, H_2(\xb) \geq \phantom{-}\delta, H_3(\xb) \leq -\delta, H_4(\xb) \geq \delta\right\}\,.
\end{split}
\end{equation}
\begin{corollary}\label{lem-cot2}
If $\conv\{\Delta^\imx_k\; |\; \imx\in I(k)\} \subseteq C_k^\delta$, $k=1,2,3$, then $(A1)$ and $(A2)$ hold.
\end{corollary}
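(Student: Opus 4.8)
The plan is to show that the $\delta$-regularized polyhedral sets defined in \eq{eq-c0-delta} are contained in the open cones of \eq{eq-c0}, so that the hypothesis of the corollary pushes the finite-difference convex hulls into the $C_k^0$ and Lemma~\ref{lem-cot} applies verbatim. Concretely, I would first establish the inclusion $C_k^\delta \subseteq C_k^0$ for $k=1,2,3$; then combine it with the assumed containment $\conv\{\Delta^\imx_k\; |\; \imx\in I(k)\} \subseteq C_k^\delta$ to recover the hypothesis \eq{eq-convcot} of Lemma~\ref{lem-cot}; and finally read off $(A2)$ from that lemma. Condition $(A1)$ will then follow essentially for free from the strict positivity of $\delta$, which forces $0 \notin C_k^\delta$.

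For the inclusion $C_k^\delta \subseteq C_k^0$, the only point is that a nonstrict inequality against a positive threshold implies the corresponding strict inequality against $0$. For $k=1$: if $\xb \in C_1^\delta$ then $H_j(\xb) \geq \delta > 0$ for every $j \in \{1,2,3,4\}$, which is precisely the defining condition of $C_1^0$ in \eq{eq-c0}. For $k=2,3$ the reasoning is identical, each constraint $H_j(\xb) \leq -\delta$ giving $H_j(\xb) < 0$ and each $H_j(\xb) \geq \delta$ giving $H_j(\xb) > 0$, so that the sign pattern of $C_k^\delta$ reproduces exactly the sign pattern of $C_k^0$. Hence $C_k^\delta \subseteq C_k^0$ for all $k$, and under the hypothesis the convex hulls therefore satisfy \eq{eq-convcot}; Lemma~\ref{lem-cot} then yields that $C_1,C_2,C_3$ are cotransverse, which is $(A2)$.

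It remains to secure $(A1)$. Since $H_j(0) = 0 < \delta$ for each $j$, the origin violates every defining inequality of $C_k^\delta$, so $0 \notin C_k^\delta$. As each finite difference lies in $\conv\{\Delta^\imx_k\; |\; \imx\in I(k)\} \subseteq C_k^\delta$, we get $\Delta^\imx_k \neq 0$ for all $\imx$ and $k$, which is $(A1)$. I do not expect any genuine obstacle: the entire content is the elementary observation that the closed sets $C_k^\delta$, which (unlike the open cones $C_k^0$) are representable by the linear inequalities the optimization solver can handle, sit strictly inside the $C_k^0$ and exclude the origin. The only thing to watch is the bookkeeping of the four sign patterns across $k=1,2,3$, to confirm they match \eq{eq-c0} line by line.
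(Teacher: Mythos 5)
Your proof is correct and follows exactly the paper's own (one-line) argument: the paper justifies the corollary by noting $C_k^\delta \subset C_k^0$ and $0 \notin C_k^\delta$, which are precisely the two facts you verify in detail before invoking Lemma~\ref{lem-cot}. Your write-up simply makes explicit the sign-pattern check and the use of $\delta > 0$ that the paper leaves to the reader.
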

This follows easily from $C_k^\delta \subset C_k^0$ and $0 \notin C_k^\delta$.
The relaxation parameter $\delta$ shifts the cones $C_1^0, C_2^0, C_3^0$ away from the point of origin (see figure \ref{fig-cot}).
\begin{remark}\label{rem3}
Due to the performed numerical experiments, we observe that during the optimization process, the finite element mesh is less prone to its degeneration with increasing $\delta$.
It can be shown that $\delta$ gives direct control over the minimal distance of two adjacent control points.
Owing to the continuity of the map from the control points onto the finite element nodes \eqref{eq-spl1}, there is also a control over the maximal deformation of a finite element.
However, a quantitative result remains to be shown.
\end{remark}

Now we state the linear constraints derived from \eq{eq-incl}.
For $\Delta^\imx_k \in C_k$ with $k=1,2,3$ and $\imx\in I(k)$ we have four inequalities of type
\begin{equation}
\mu_{s,k} H_s(\Delta^\imx_k) \geq \delta\;,
\end{equation}
where $\mu_{s,k} \in \{-1;1\}$ is chosen appropriatly to match the inequality to one of those in the definition of $C_k^\delta$ \eq{eq-c0-delta}.
Recalling definition \eq{eq-delta} and $\eq{eq-spl1}_2$
\begin{equation*}
\begin{split}
\Delta^\imx_k = \Pb^{\imx_k^+} - \Pb^{\imx}\;, \\
\Pb^{\,\imx}(\alphabf^\imx) = \hat\Pb^{\,\imx} + \alphabf^\imx
\end{split}
\end{equation*}
and exploiting the linearity of $H_s$ we finally get linear constraints for the design variables $\alphabf$ of form
\begin{equation}\label{eq-inj}
-\mu_{s,k} H_s(\alpha^{\imx_k^+} - \alpha^\imx) \leq \mu_{s,k} H_s(\hat\Pb^{\imx_k^+} - \hat\Pb^{\,\imx}) - \delta\;.
\end{equation}
These conditions are sufficient for injectivity of \eq{eq-spl1} for all deformations $\alphabf$.
Hence applying them in the optimization problems described in the following sections we are able to prevent degeneration of the unit cell $Y$ in each iteration of the optimization algorithm.

\begin{remark}
We note that the choice of the planes $H_s$ may strongly influence optimal solutions.
Thus it might be better to choose them in a more problem specific way, for example widen cones and narrow others in exchange.
\end{remark}

\section{Numerical results}\label{sec-numres}
\FloatBarrier
In this section, we present numerical examples illustrating the two kinds of optimization problems introduced in Sections~\ref{sec-matopt} and~\ref{sec-2scopt}.
For both we consider the same initial design on the porous microstructure.
With reference to the representative cell $Y$ split according to \eq{eq-6}, the pore $Y_c$ consists of 3 perpendicular cylindrical channels $Y_c^i$, $i = 1,2,3$ forming a rectangular cross.
At the intersection the channel axes, a sphere $Y_c^4$ is placed, so that the periodic porosity is generated by $Y_c = \bigcup_{i = 1,\dots,4} Y_c^i$.

\begin{table}
  \begin{center}
    \begin{tabular}{l|c|c|c|c|c|c|r|c|r}
      problem & fig. & $r$ & $\kappa_0$ & $\kappa_1$ & $s_0$ & $s_1$ & \#iter. & objective & rel. gain \\
      \hline
      \textit{S/P} & \ref{fig-SP} & 0 & 2e-5 & $-\infty$ & - & - & 101 & 1.149 & 7.3\% \\
      \textit{S/P-bis} & \ref{fig-SPbis} & 1 & 2e-5 & $-\infty$ & - & - & 35 & 1.082 & 1.0\% \\
      \textit{S/PX} & \ref{fig-SPX} & 0 & $-\infty$ & 2e-5 & - & - & 159 & 1.192 & 11\% \\
      \textit{P/SX'} & \ref{fig-PSX} & 0 & - & - & 0.9 & 0.95 & 114 & 12.43e-5 & 330\% \\
      \textit{C/S} & \ref{fig-CS} & 0 & - & - & $\Phi_\eb(\Aop_0)$ & - & 41 & 4.298 & 2.2\% \\
    \end{tabular}
    \caption{The numerical examples for material optimization. For the problem definitions including explanations for the parameters $r, \kappa_0, \kappa_1, s_0, s_1$ see section \ref{sec-matopt}.}
    \label{tab-num-ex}
  \end{center}
\end{table}


We apply a cubic spline box with three segments of degree three in each direction resulting in 216 control points (cf. figure \ref{fig-initial}) to parameterize the shape of $Y_c$, noting that also the shape of $Y$ can be modified within the periodicity constraints, as discussed in Section~\ref{sec-param}.
Thus, after elimination of degrees of freedom by periodic boundary conditions, the free coordinates of the control points $\{\alphabf^i\}$ yield 375 design variables.

\begin{figure}[tp]
  \begin{center}
    \begin{subfigure}[t]{.53\textwidth}
      \includegraphics[trim={0 90pt 0 90pt},clip,width=.9\columnwidth]{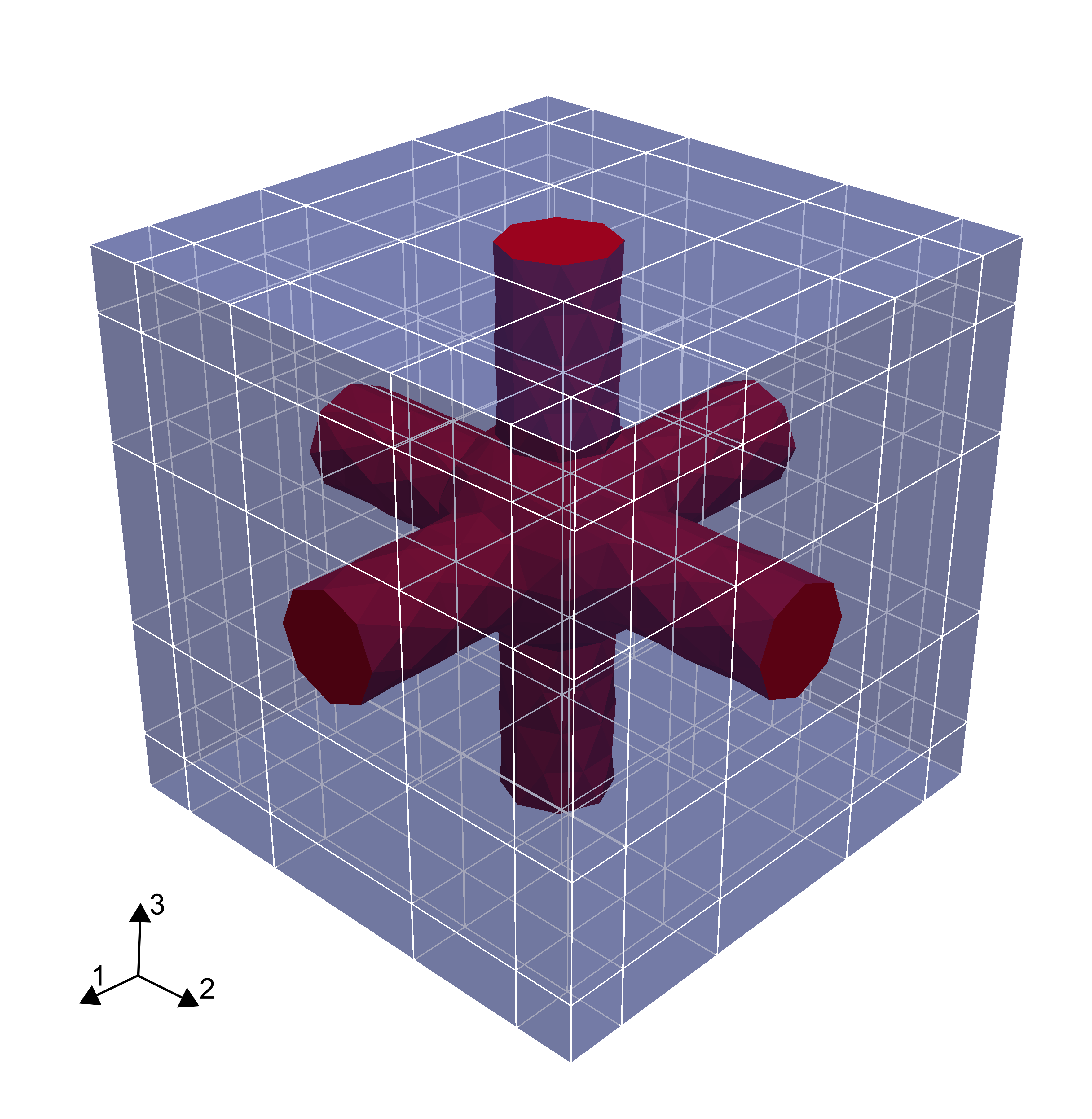}
      \label{fig-initial-a}
    \end{subfigure}
    ~
    \begin{subfigure}[t]{.43\textwidth}
      \includegraphics[trim={150pt 90pt 30pt 30pt},clip,width=\columnwidth]{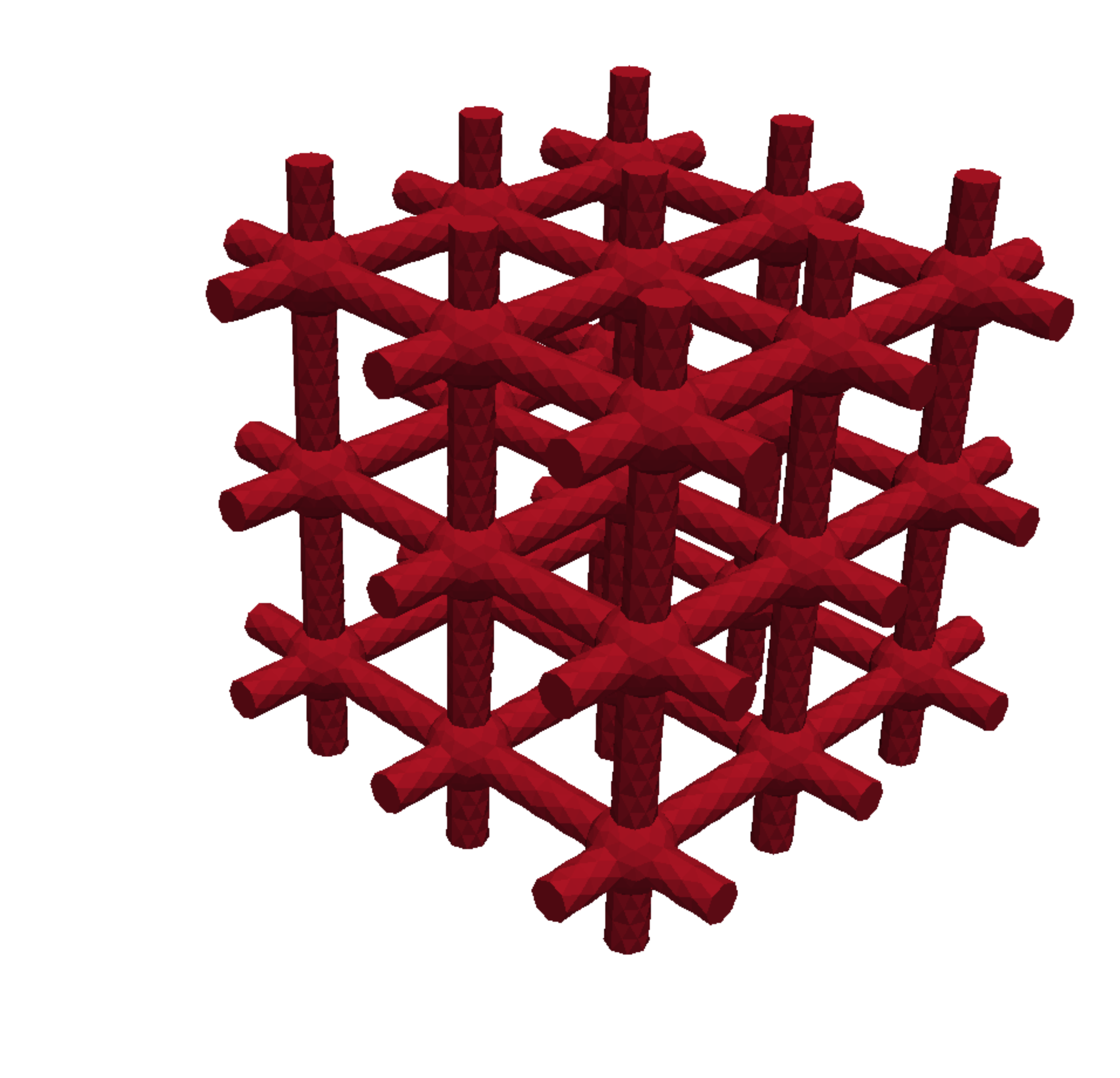}
      \label{fig-initial-b}
    \end{subfigure}
    \caption{Left: The initial setup: a solid cube (blue) with three channels (red) intersecting at a hollow sphere (red).
      The spline box has three segments of degree three in each direction resulting in 216 control points with the control polygon shown in white.
      right: 3x3x3 repeated channel part}
    \label{fig-initial}
  \end{center}
\end{figure}

\subsection{Material optimization}

For $k=1,2,3$ we choose $\gb^k = e_k$ unit vectors and $\eb^k$ and $\sigmabf^k$ such that for the initial design $\Phi_\eb^k(\Aop) = \Aop_{kkkk}$ and $\Phi_\sigmabf^k(\Cop) = \Cop_{kkkk}$.
The stiffness of the initial design is $\Phi_\eb^k(\Aop) = 1.071$ or $\Phi_\sigmabf^k(\Cop) = 4.206$, respectively, and the permeability is $\Psi^k(\Kb) = 4.3\cdot 10^{-5}$ in each direction $k=1,2,3$.

For the optimization we set the bounds for the directional permeabilities $\kappa_0 = 2\cdot 10^{-5}$, and also the geometry regularization parameter introduced in \eq{eq-inj},
 $\delta = 0.02$.
We solve the optimization problems using the sparse nonlinear optimizer SNOPT.
In contrast to Fig.~\ref{fig-initial}, we omit visualization of the control polygon in all forthcoming figures.

For problems \textit{S/P}, \textit{S/P-bis}, \textit{S/PX} we choose $\gamma^k = \tfrac13, k=1,2,3$.
The achieved solution of problem \textit{S/P} is shown in Fig.~\ref{fig-SP}.
The optimized design follows our intuition how to maximize the stiffness; diamteres of the channels in all the three directions and the sphere are made as small as possible, thus, allowing for increasing the skeleton stiffness.
The channels still form a rectangular cross, since no  direction is preferred.
The achieved stiffness is $\Phi_\eb(\Aop) = 1.149$, \ie improvement by $7.3\%$, whereby the permeability constraints are all active at their lower bounds.

\begin{figure}[tp]
  \begin{center}
    \begin{subfigure}[t]{.53\textwidth}
      \includegraphics[trim={0 40pt 0 40pt},clip,width=\columnwidth]{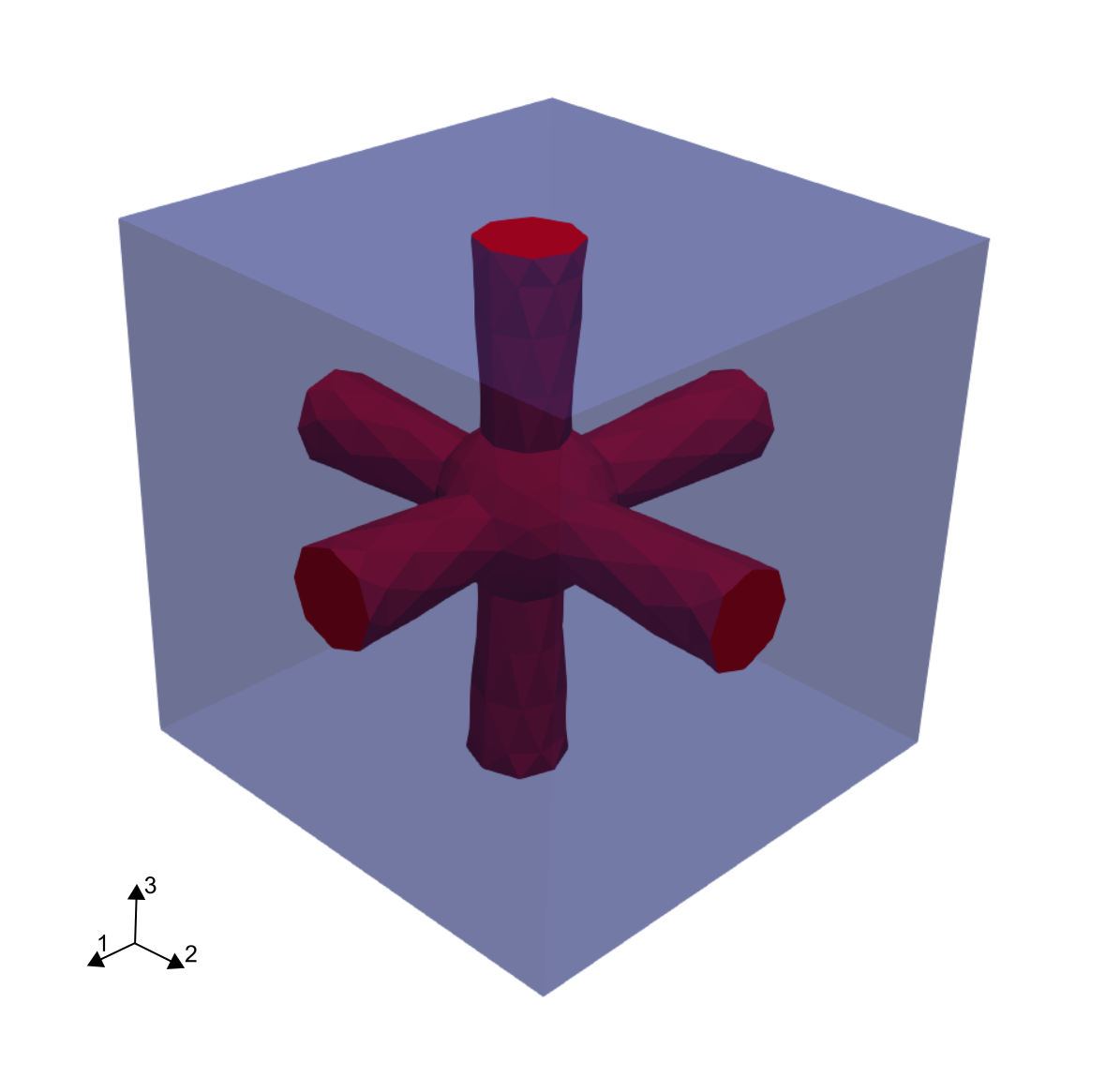}
      \label{fig-SP-a}
    \end{subfigure}
    ~
    \begin{subfigure}[t]{.43\textwidth}
      \includegraphics[trim={120pt 90pt 0 30pt},clip,width=\columnwidth]{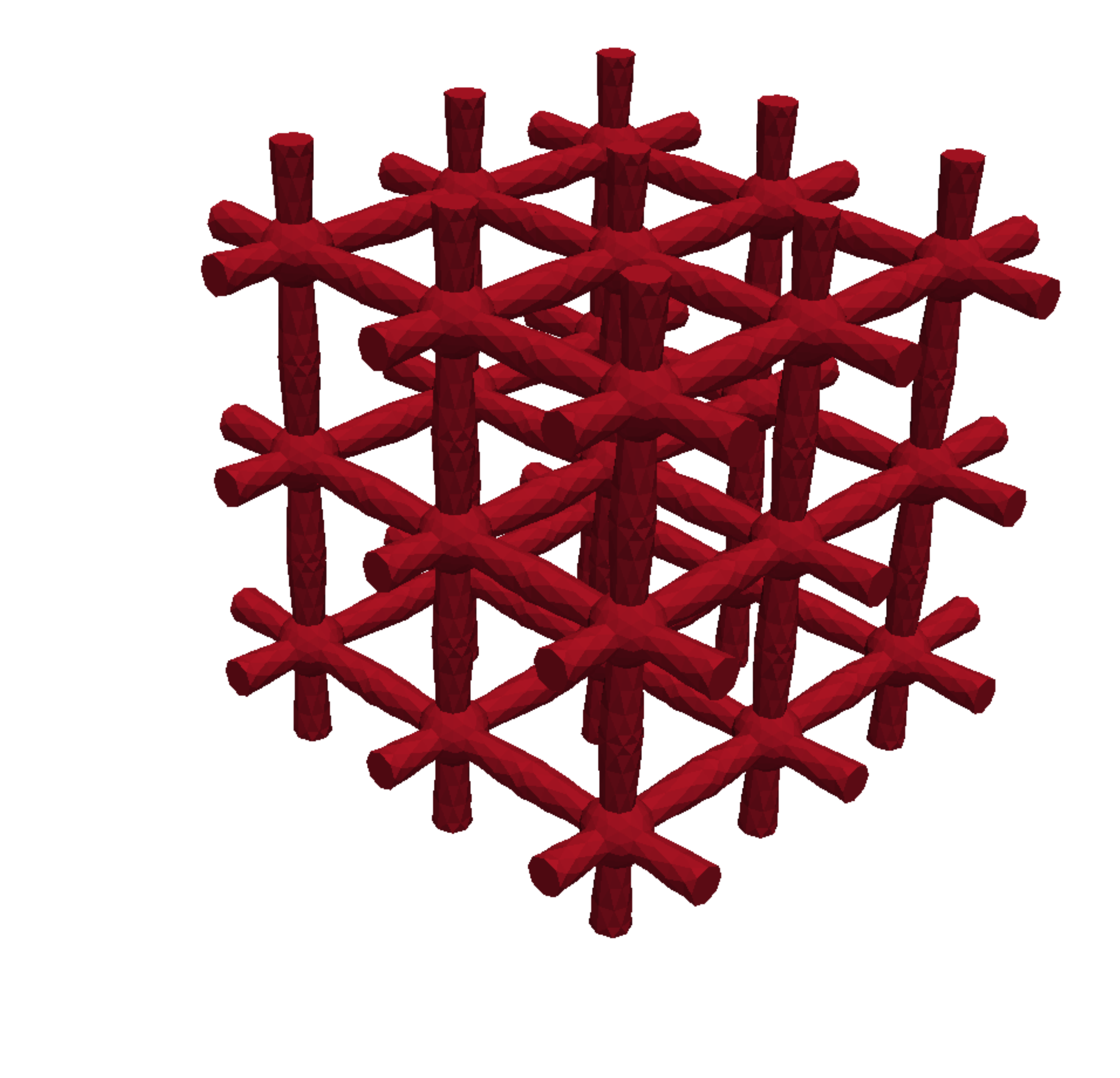}
      \label{fig-SP-b}
    \end{subfigure}
    \caption{Left: The optimized unit cell geometry regarding Problem \textit{S/P}. The objective was improved by $7.3\%$. right: 3x3x3 repeated channel part}
    \label{fig-SP}
  \end{center}
\end{figure}

For problem \textit{S/P-bis}, the resulting optimized microstructure is displayed in Fig.~\ref{fig-SPbis}.
In consequence of the considered volume preservation of $Y_c$, the channels get wider near to the boundary of the unit cell.
In fact the diameter of the channels is approximately equal to the diameter of the deformed sphere.
Also in this case the optimized redistribution of the solid material has a rather intuitive explanation -- since the channel junction is the weakest area in the cell $Y$, therefore, the pore volume is reduced as much as possible (by virtue of the channels diameters) within the other design constraints.
The stiffness of the optimized  microstructure improved by $1.0\%$ only, attaining $\Phi_\eb(\Aop) = 1.082$,
The permeability constraint for $k=3$, and the volume constraint are active, however,  permeability constraints $\Psi^k(\Kb) = 2.9\cdot 10^{-5}$ for $k=1,2$ are nonactive, which corresponds to the nonsymmetric design.
It should be noted at this point, that the deformation of the boundary of the unit cell is due to numerical reasons but only has little influence on the resulting design, as the cell is considered to be repeated periodically.

\begin{figure}[tp]
  \begin{center}
    \begin{subfigure}[t]{.53\textwidth}
      \includegraphics[trim={0 20pt 0 30pt},clip,width=.95\columnwidth]{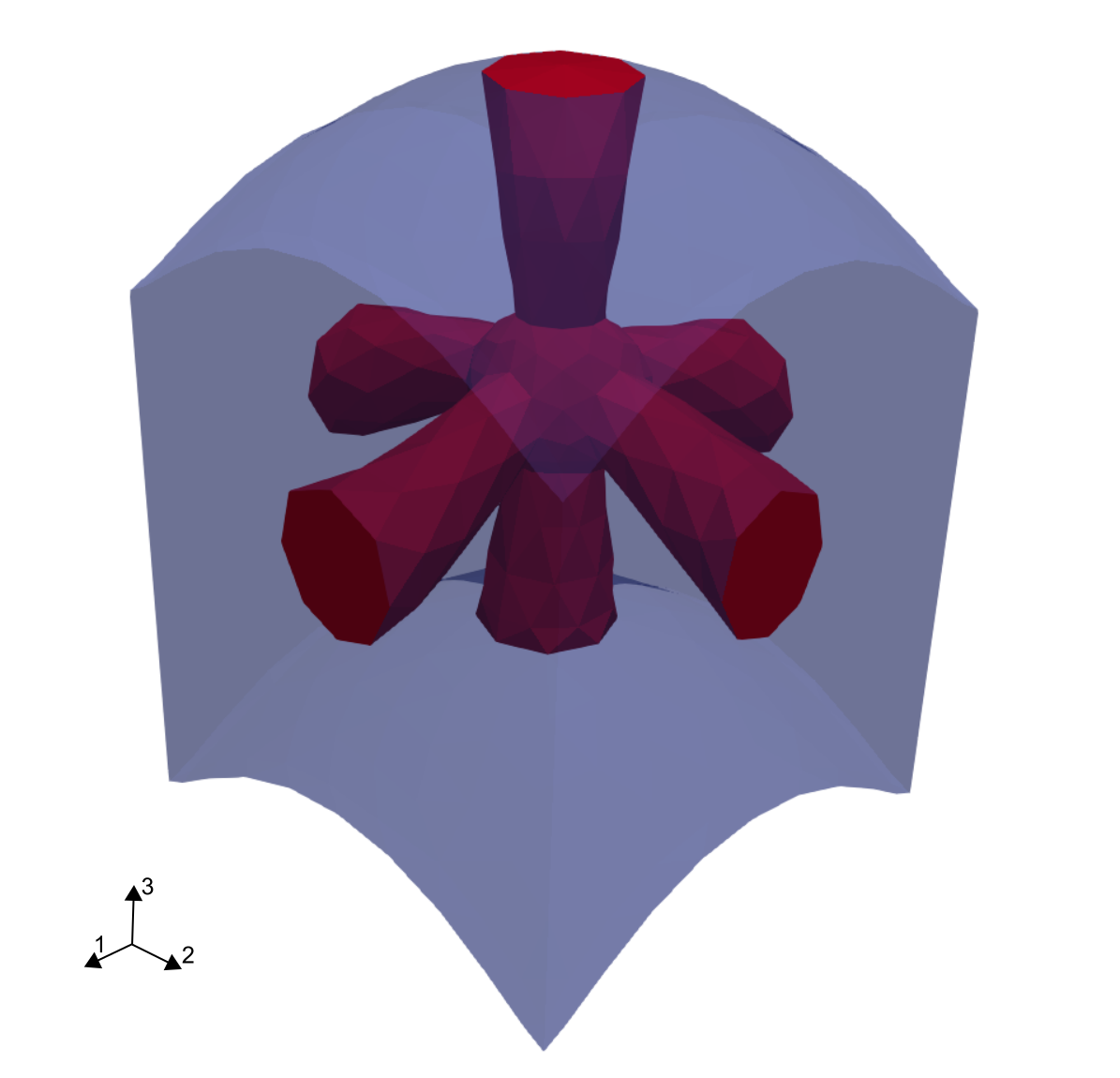}
      \label{fig-SPbis-a}
    \end{subfigure}
    ~
    \begin{subfigure}[t]{.43\textwidth}
      \includegraphics[trim={90pt 50pt 0 0pt},clip,width=\columnwidth]{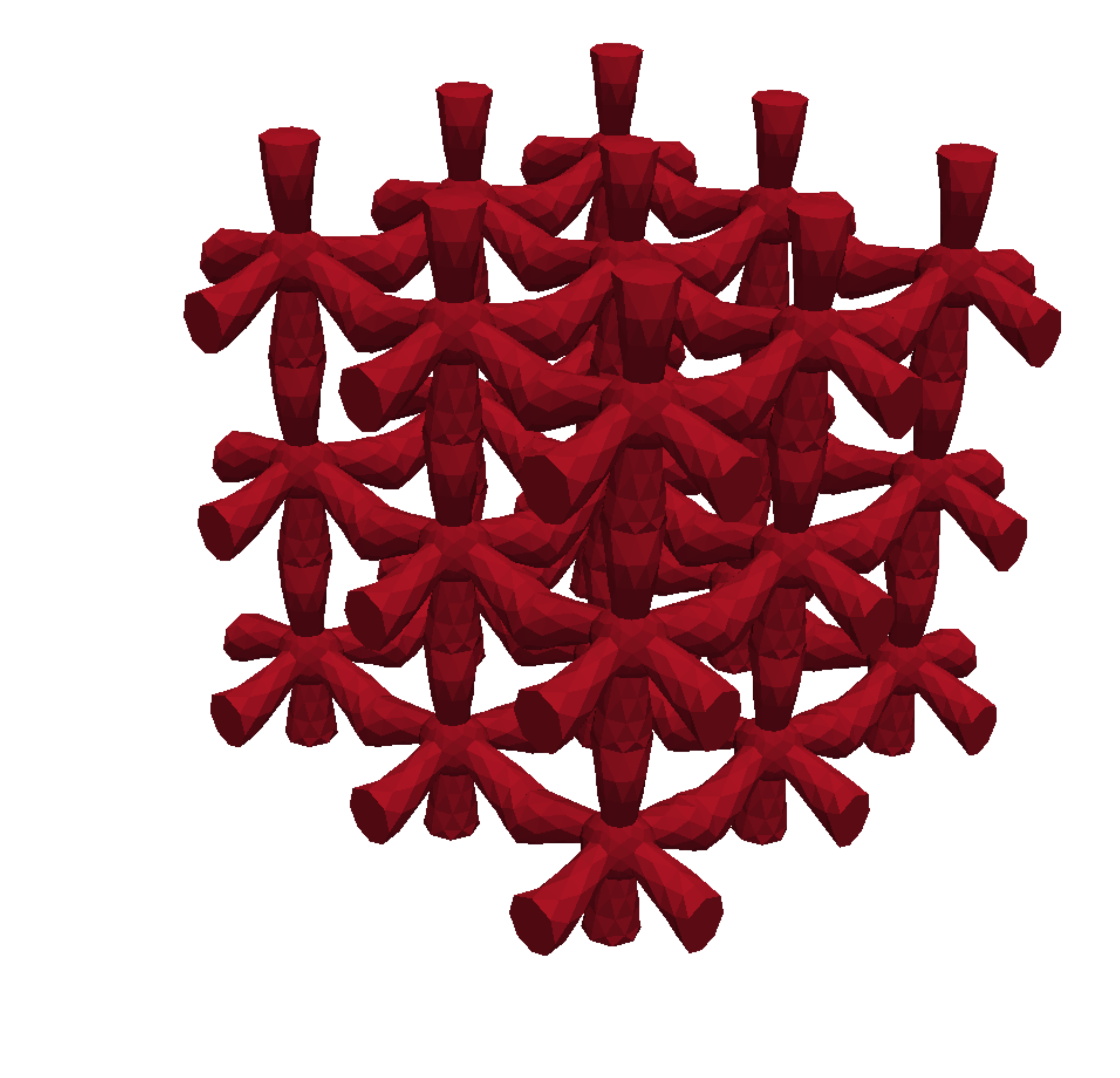}
      \label{fig-SPbis-b}
    \end{subfigure}
    \caption{The optimization result of problem \textit{S/P-bis}. The stiffness was improved by $1.0\%$.
      Note the widening of the channels at the boundaries due to the forced volume preservation of the channel part.}
    \label{fig-SPbis}
  \end{center}
\end{figure}

\begin{figure}[tp]
  \begin{center}
    \begin{subfigure}[t]{.48\textwidth}
      \includegraphics[width=\columnwidth]{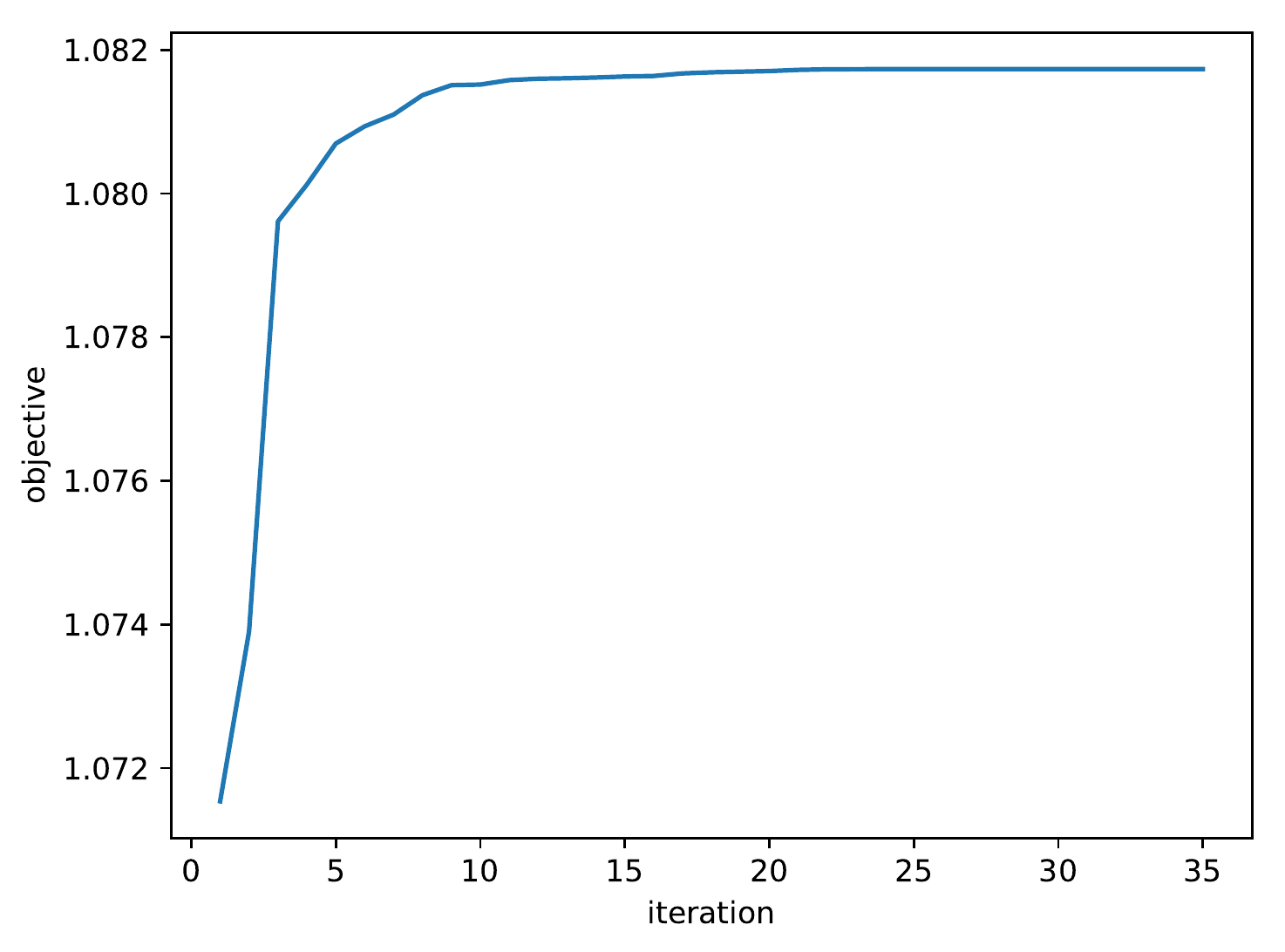}
      \label{fig-SPbis-convergence}
    \end{subfigure}
    ~
    \begin{subfigure}[t]{.48\textwidth}
      \includegraphics[width=\columnwidth]{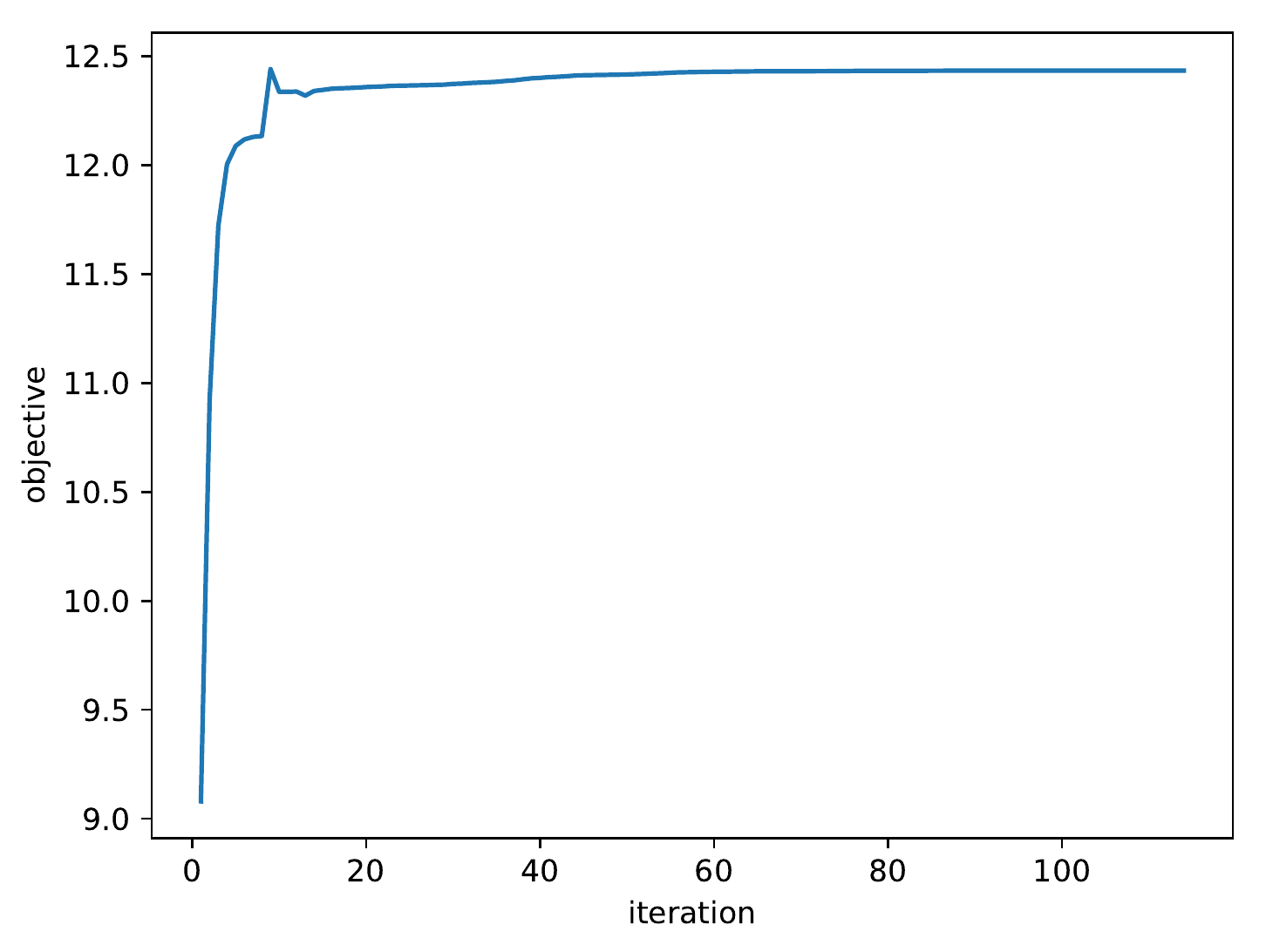}
      \label{fig-PSX-convergence}
    \end{subfigure}
    \caption{Convergence plots for problems \textit{S/P-bis} (left) and \textit{P/SX} (right)}
    \label{fig-matopt-convergence}
  \end{center}
\end{figure}

In all the previous examples the bounds on the permeability constraints along the principal axes $\gb^k$ as well as the weights $\gamma^k$ in the objective function $\Phi_\eb$ were chosen completely symmetric enforcing almost isotropic designs.
For problem \textit{S/PX}, see \eq{eq-op2},  we set $\beta^1=\beta^2=\tfrac49, \beta^3=\tfrac19$ and $\kappa_1 = 2\cdot 10^{-5}$.
Due to the different weights of the directional permeabilities, the optimization results in a remarkably anisotropic design, as can be seen in Fig.~\ref{fig-SPX}.
The permeability in the first direction is more important than the one in the third direction, thus the channel in the first direction is much wider than the one in the third direction.
Also a wider channel would be needed to increase the permeability in the second direction.
However, this is not possible because of to the chosen design parametrization.

\begin{figure}[tp]
  \begin{center}
    \begin{subfigure}[t]{.53\textwidth}
      \includegraphics[trim={0 40pt 0 40pt},clip,width=\columnwidth]{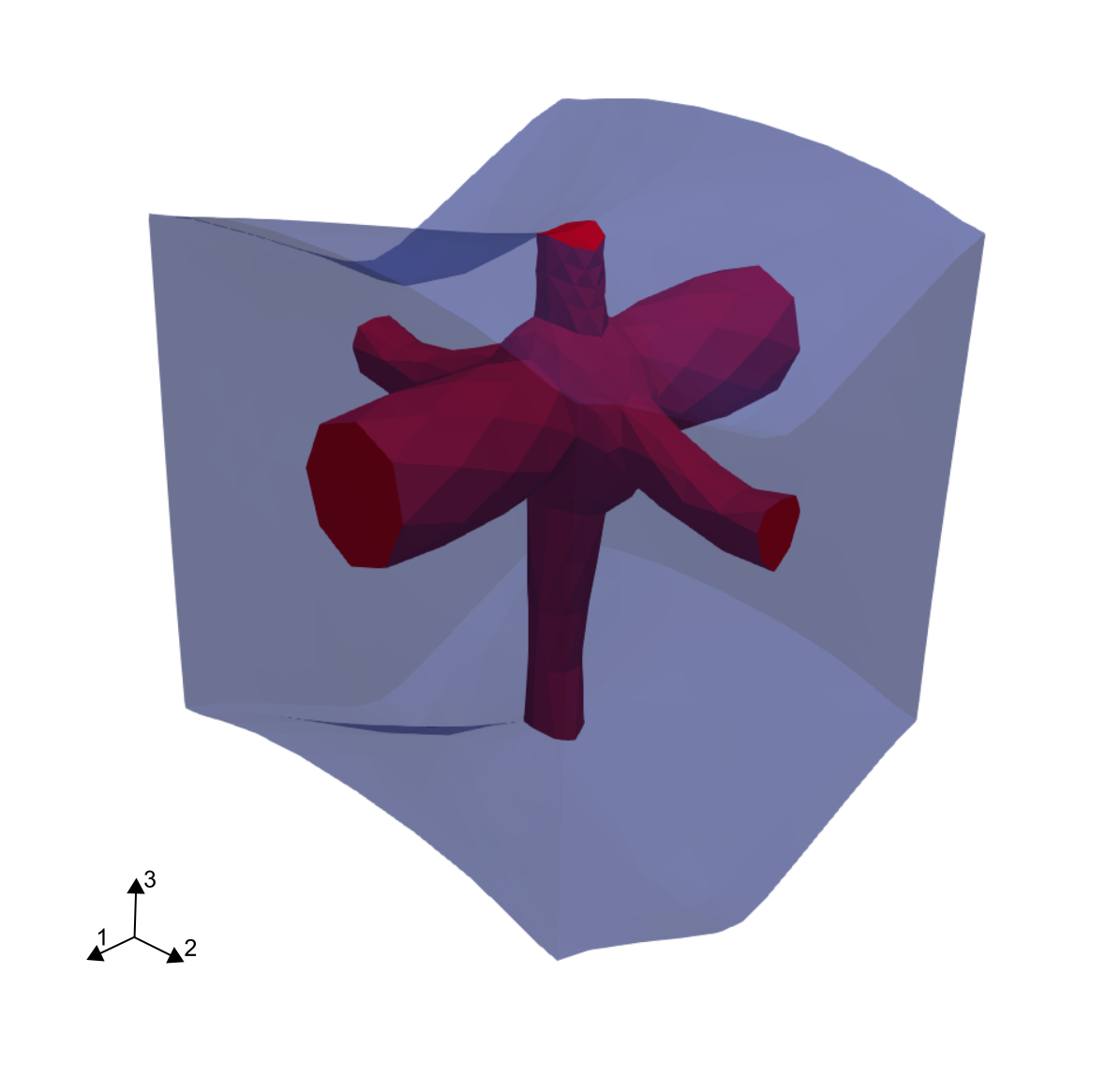}
      \label{fig-SPX-a}
    \end{subfigure}
    ~
    \begin{subfigure}[t]{.43\textwidth}
      \includegraphics[trim={100pt 30pt 30pt 30pt},clip,width=\columnwidth]{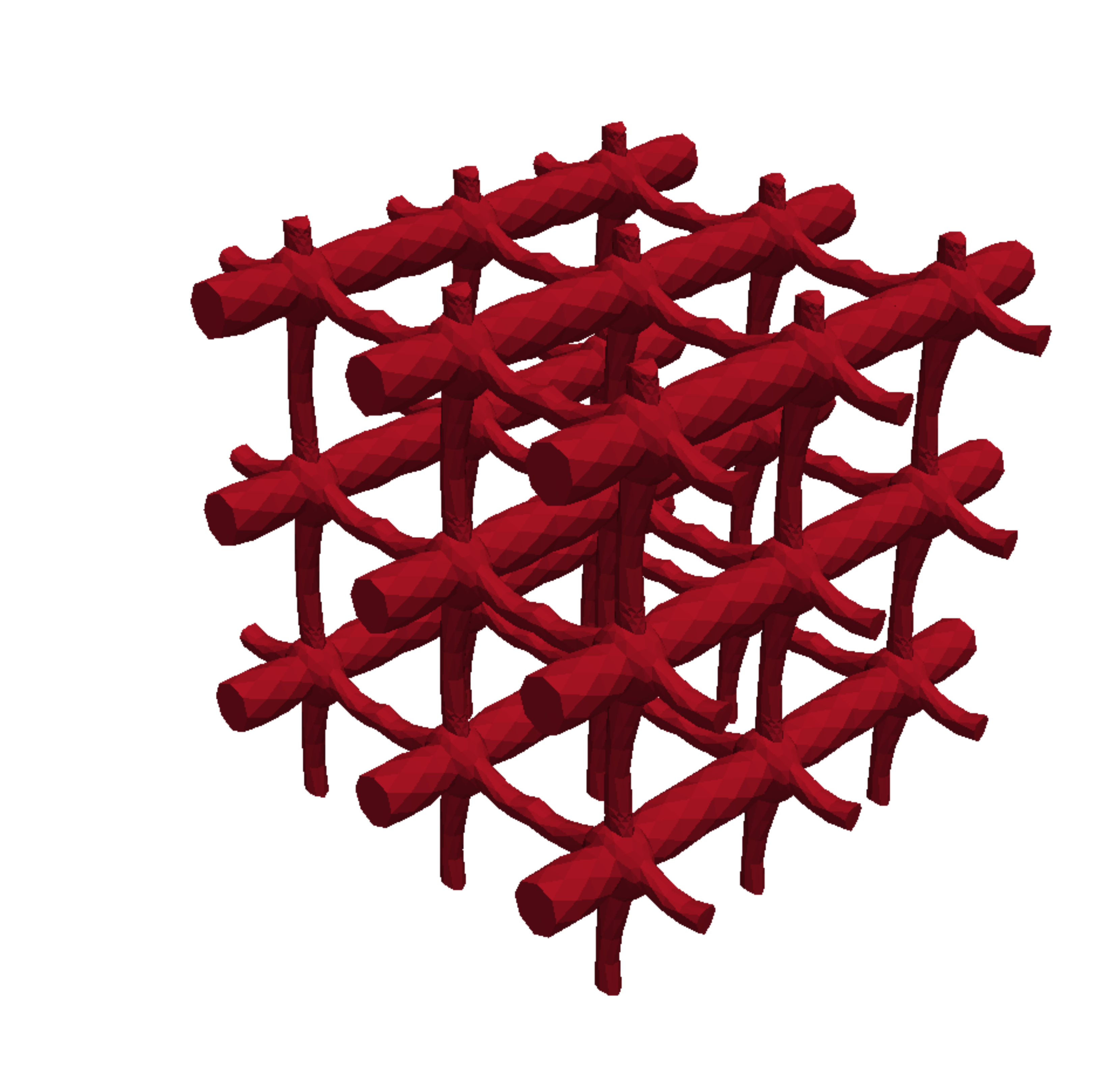}
      \label{fig-SPX-b}
    \end{subfigure}
    \caption{The optimized geometry for problem \textit{S/PX}. The stiffness was improved by $11\%$.}
    \label{fig-SPX}
  \end{center}
\end{figure}

An optimization result of problem \textit{P/SX'}, see \eq{eq-op3}, is shown in Fig.~\ref{fig-PSX}.
We choose $\beta^k = \tfrac13, k=1,2,3$ as weights for the permeability sum, $s_0=0.9$ and $s_1=0.95$ together with the weights for the stiffness sum $\gamma^1 = \tfrac{8}{13}, \gamma^2 = \tfrac{4}{13}, \gamma^3 = \tfrac{1}{13}$.
The choice $s_0 \approx s_1$ leads to almost symmetric design of the microstructure, although
 the stiffnesses in the preferred directions reflect the decreasing sequence of $\gamma^k$, $k=1,2,3$, namely $\Phi_\eb^1(\Aop) = 0.96, \Phi_\eb^2(\Aop) = 0.94, \Phi_\eb^3(\Aop) = 0.90$.
Also in this case,  to increase the stiffness in direction $\eb^1$, the sphere in the middle is stretched in this direction.
The achieved objective function value is $\Psi(\Kb) = 12.4\cdot 10^{-5}$ gives rise to an improvement by $328\%$ with respect to the initial design.

\begin{figure}[tp]
  \begin{center}
    \begin{subfigure}[t]{.53\textwidth}
      \includegraphics[trim={0 40pt 0 90pt},clip,width=\columnwidth]{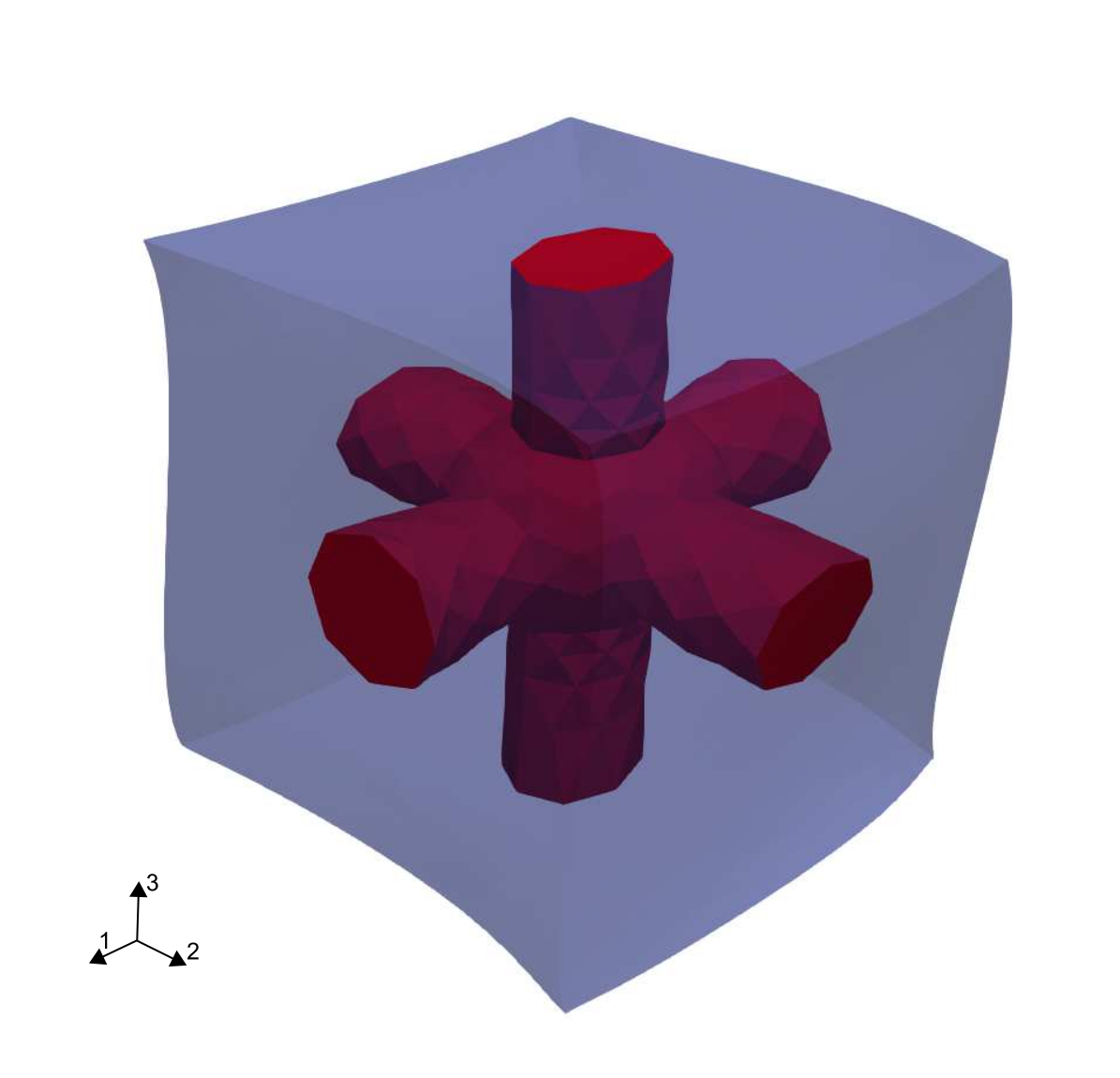}
      \label{fig-res1c-a}
    \end{subfigure}
    ~
    \begin{subfigure}[t]{.43\textwidth}
      \includegraphics[trim={100pt 60pt 30pt 30pt},clip,width=\columnwidth]{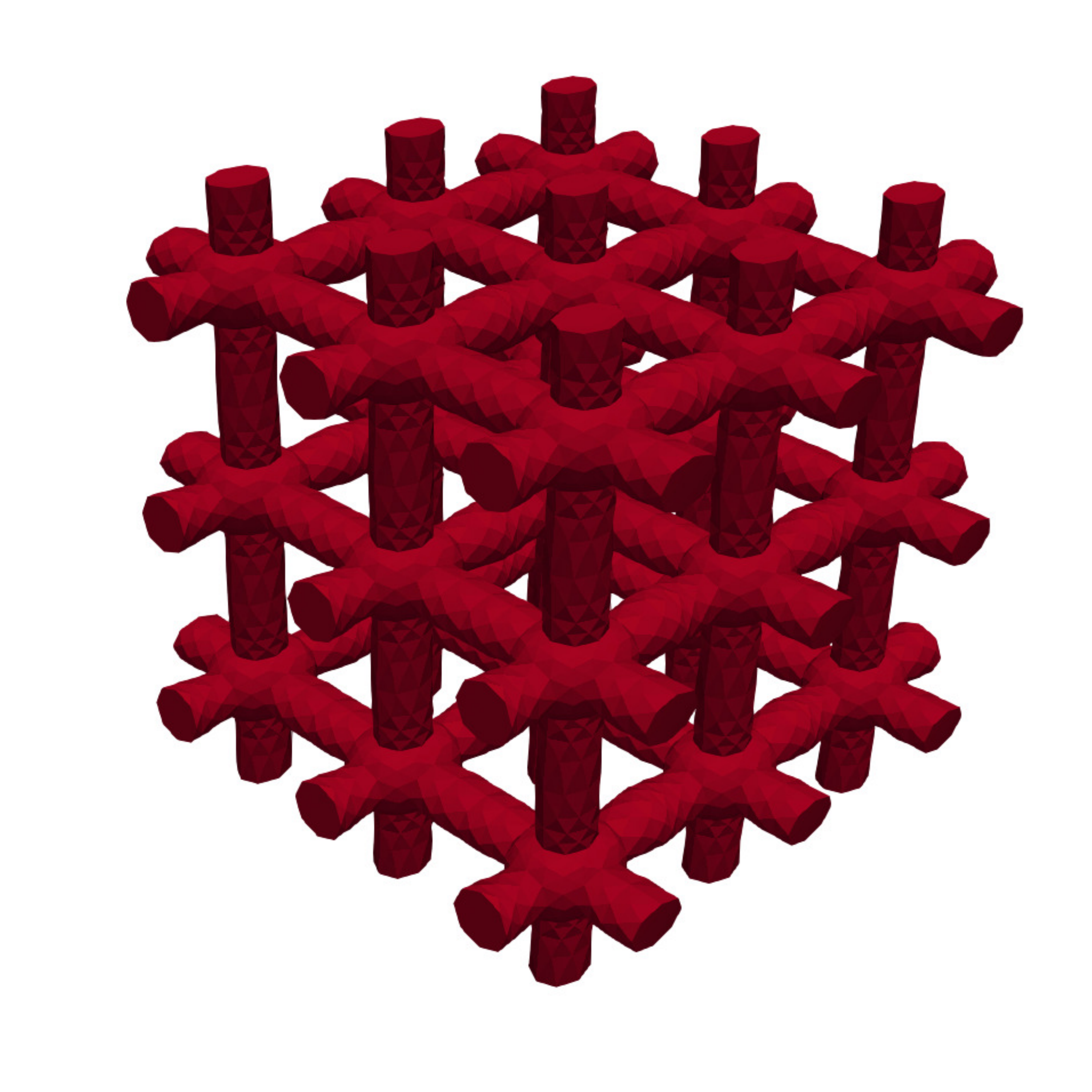}
      \label{fig-res1c-b}
    \end{subfigure}
    \caption{The optimization result of problem \textit{P/SX'}. The permeability was improved by $328\%$.}
    \label{fig-PSX}
  \end{center}
\end{figure}

To solve the minimization problem \textit{C/S}, see \eq{eq-op5}, we set $s_0=\Phi_\eb(\Aop_0)$, where $\Aop_0$ is the stiffness tensor of the drained initial design again with $\gamma^k=\tfrac13, k=1,2,3$.
In this case, since the porous material permeability is irrelevant, the undrained stiffness maximization induces redistribution of the solid phase so that the sphere in the middle of the cell $Y$ is blown up whilst the channels are getting rather smaller near to the (periodic) boundary $\pd Y$, see Fig.~\ref{fig-CS}.
This shape modification reflects the trend to create one spherical inclusion; note that the drained stiffnesses (as measured in directions $k = 1,2,3$) is bounded due to the imposed constraints.
Compared to the initial layout, the undrained compliance, \ie the objective, decreased  merely by $2.1\%$.

\begin{figure}[tp]
  \begin{center}
    \begin{subfigure}[t]{.53\textwidth}
      \includegraphics[trim={0 40pt 0 80pt},clip,width=\columnwidth]{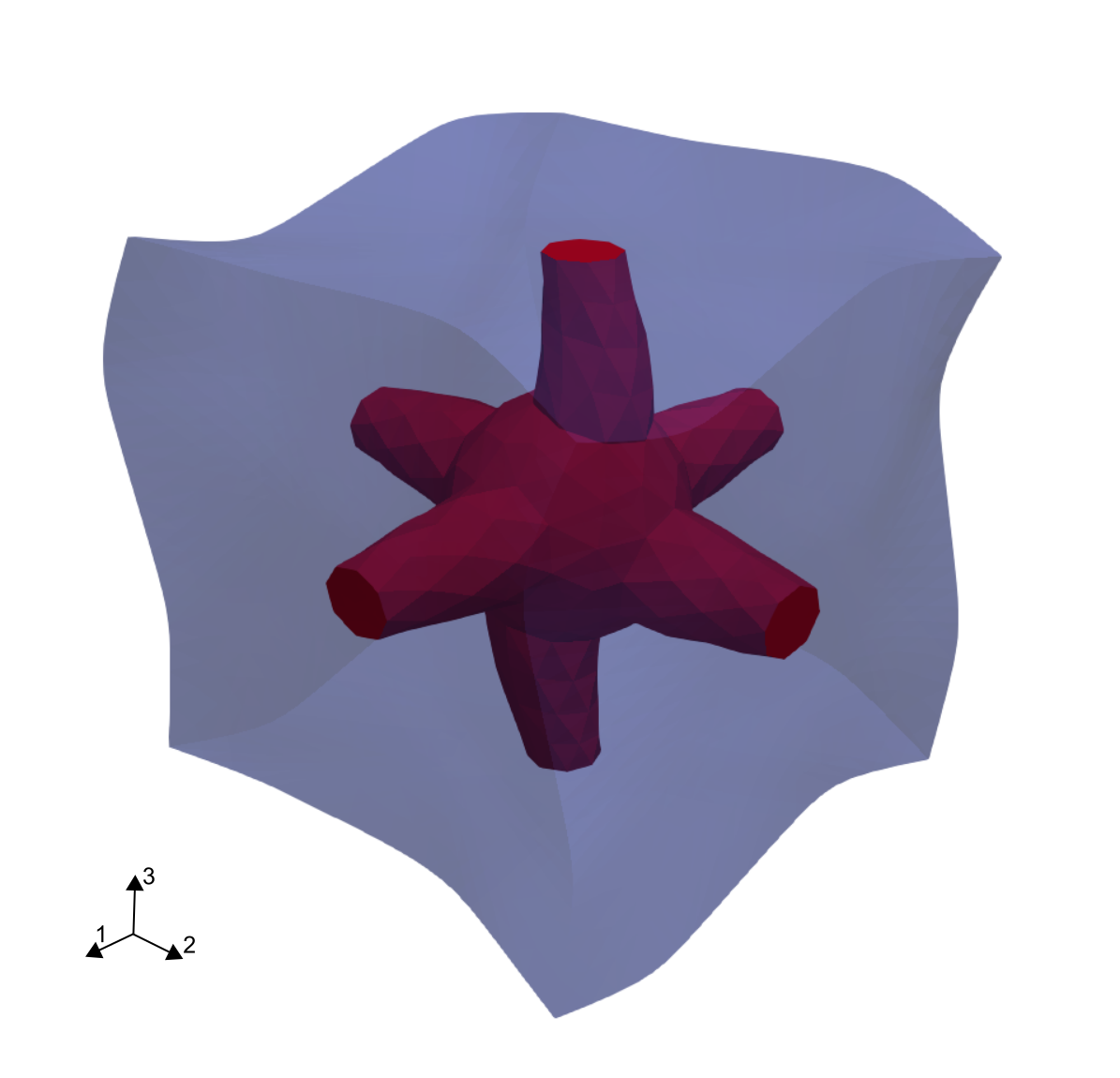}
      \label{fig-CS-a}
    \end{subfigure}
    ~
    \begin{subfigure}[t]{.43\textwidth}
      \includegraphics[trim={100pt 90pt 30pt 0},clip,width=\columnwidth]{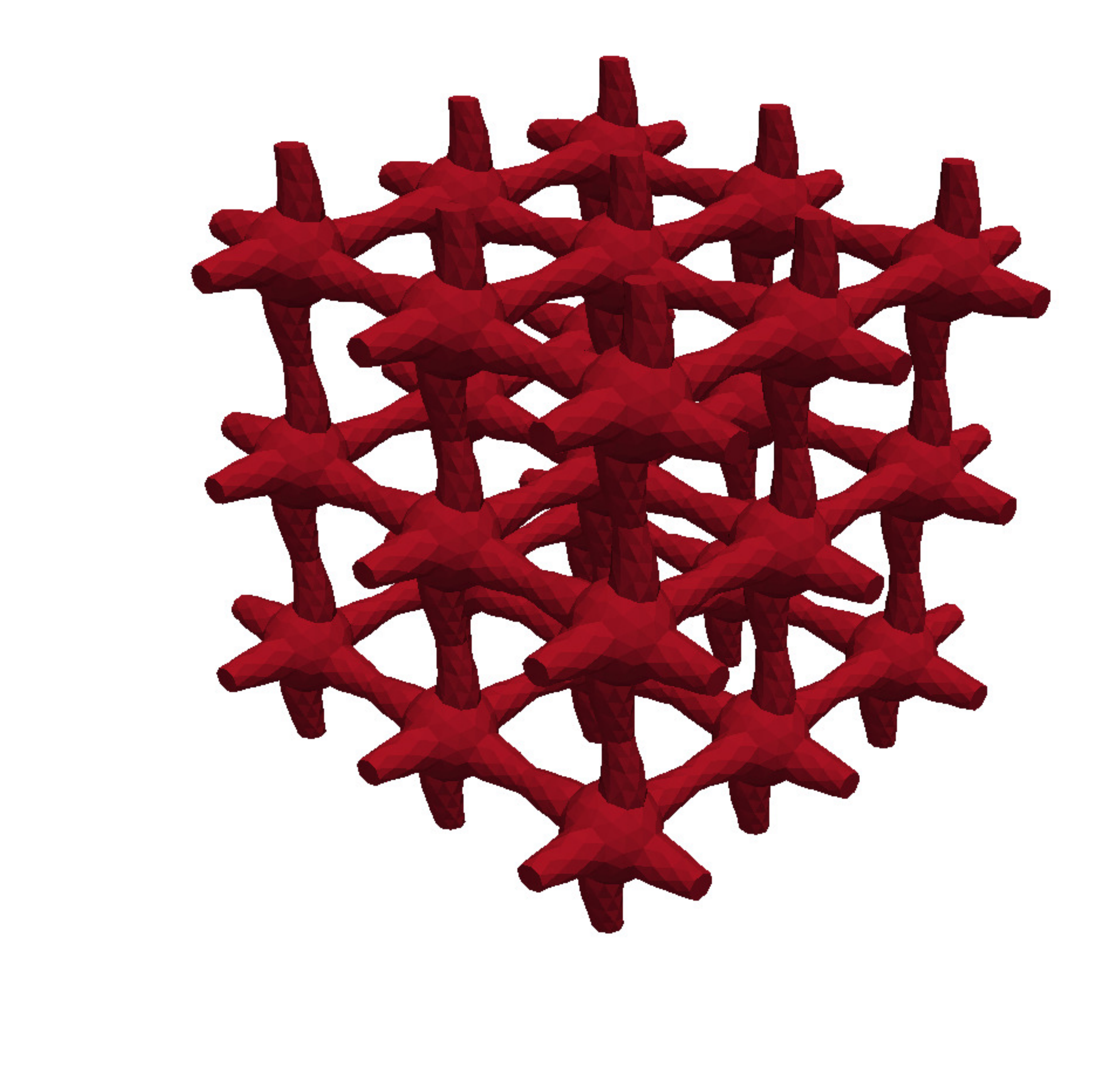}
      \label{fig-CS-b}
    \end{subfigure}
    \caption{The optimized geometry for problem \textit{C/S}. The objective (the undrained stiffness) was improved by $2.1\%$.}
    \label{fig-CS}
  \end{center}
\end{figure}


\subsection{Numerical illustrations for the 2-scale optimization}\label{sec-2scale-numres}

Next we present numerical solutions of problem \eq{eq-opg16} at a location of interest defined in a given  macroscopic domain.
First we solve the macroscopic problems \eq{eq-opg1} and \eq{eq-opg2} on a cuboid domain discretized by 15x10x2 finite elements, see Fig.~\ref{fig-macro}, where the geometry of the macroscopic specimen (domain $\Om$) and the boundary conditions are depicted.
On a part of $\Gamma_N$, surface stresses are applied, while the body is fixed at $\Gamma_D$.
We prescribe pressure at $\Gamma_p^1$ by $\bar p^1 = 1$ and at $\Gamma_p^2$ by $\bar p^2 = 0.5$.
Problem \eq{eq-opg16} also involves the Lagrange multiplier $\Lambda$ which is  a~priori unknown.
Thus, we consider different values of $\Lambda$ for which the adjoint system \eq{eq-opg11} must be solved.
From the definition of the macroscopic problem in \eq{eq-opg7}, it is readily seen that a positive $\Lambda > 0$ means too much fluid flows through $\Gamma_p^2$, while a negative $\Lambda<0$ means the opposite, the flow is insufficient.
The solution of the state problems is displayed in Fig.~\ref{fig-macro}, while solutions of the adjoint system for $\Lambda = -1$ and $\Lambda = 1$ can be seen in Fig.~\ref{fig-macro-adjoint}.

\begin{figure}[tp]
  \begin{center}
    \begin{subfigure}[t]{.5\textwidth}
      \includegraphics[width=0.9\columnwidth]{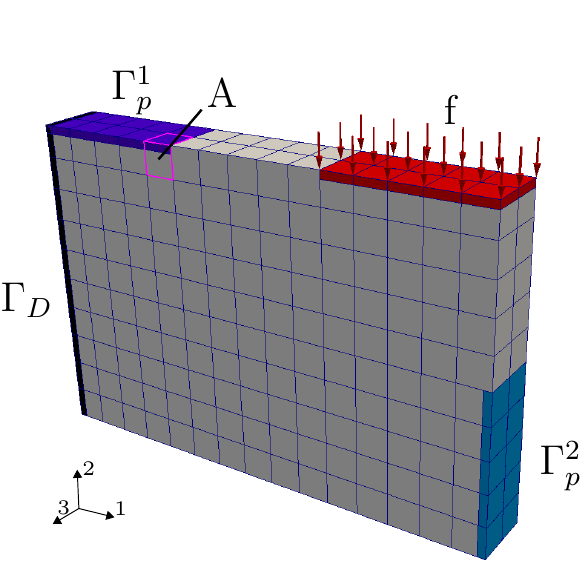}
      \label{fig-macro-setting}
    \end{subfigure}
    ~
    \begin{subfigure}[t]{.47\textwidth}
      \includegraphics[trim={100pt 100pt 100pt 100pt},clip,width=\columnwidth]{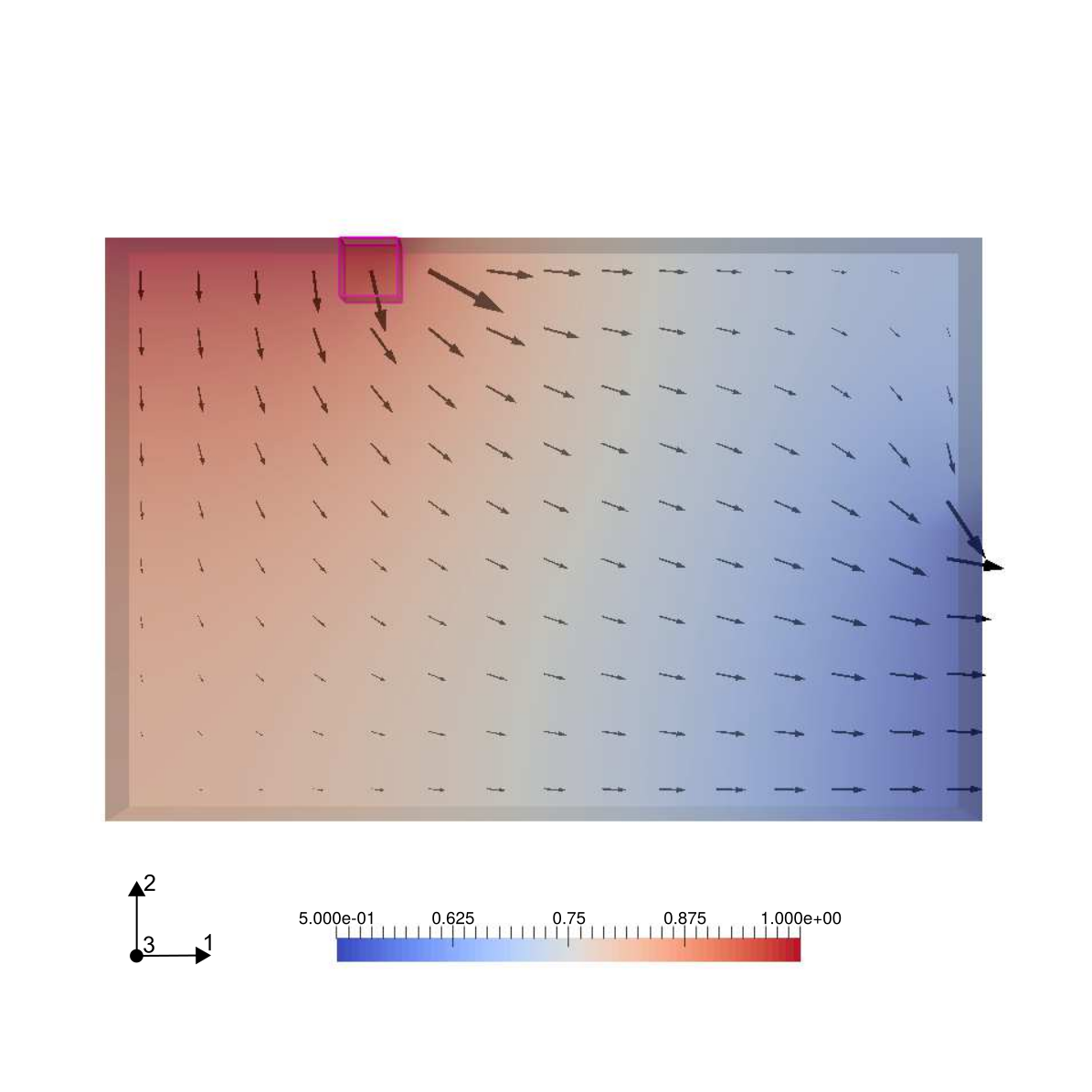}
      \label{fig-macro-state}
    \end{subfigure}
    \caption{Left: The setup of the macroscopic domain for the 2-scale examples.
On a portion of boundary $\Gamma_N$ marked by ``f'', the body is loaded by traction forces.
    Right: The solution of the state problem (front view).
    The color encodes magnitude of pressure $p$ and the arrows show directions and magnitudes of the fluid velocity.}
    \label{fig-macro}
  \end{center}
\end{figure}

\begin{figure}[tp]
  \begin{center}
    \begin{subfigure}[t]{.47\textwidth}
      \includegraphics[trim={100pt 100pt 30pt 100pt},clip,width=\columnwidth]{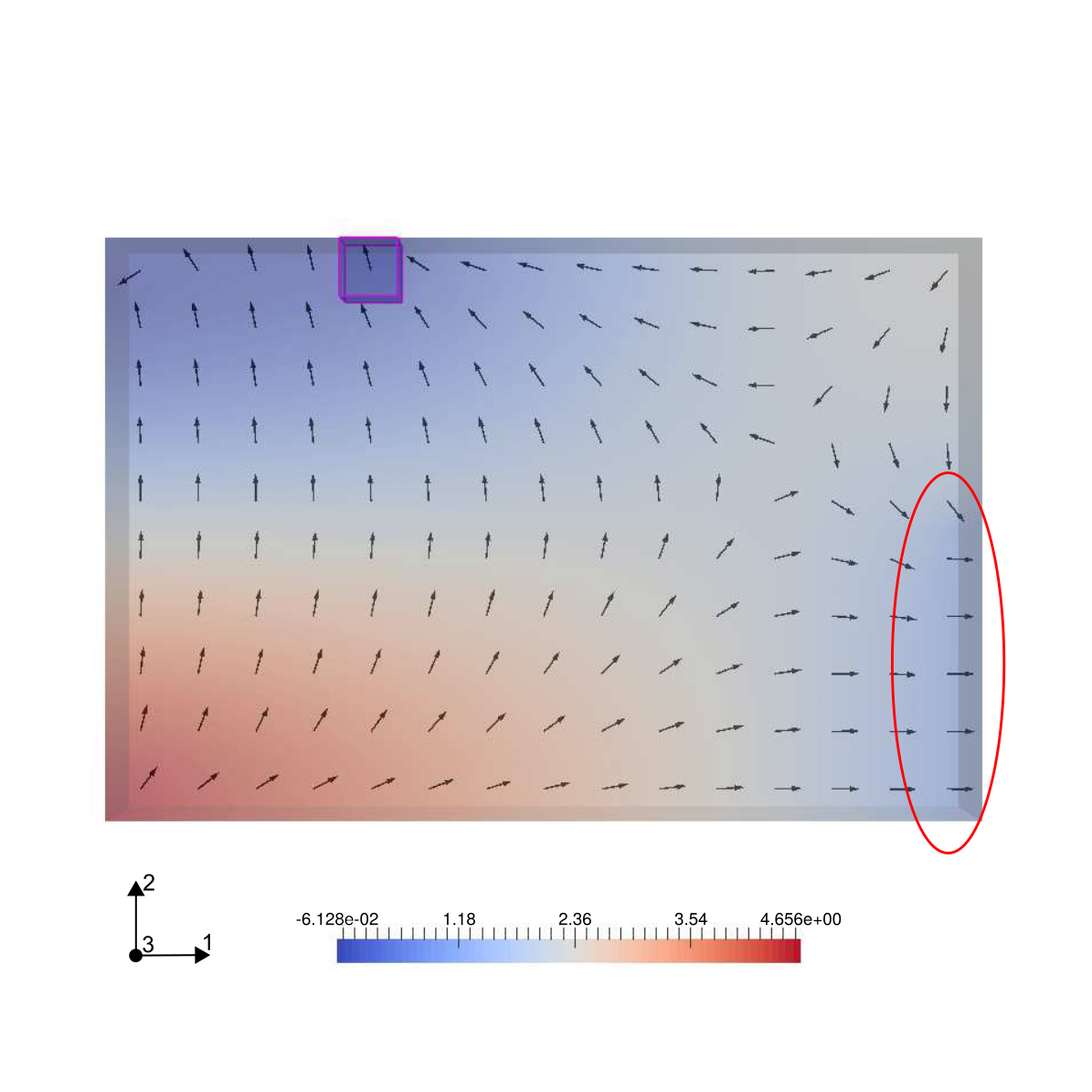}
      \label{fig-macro-adjoint-1}
    \end{subfigure}
    ~
    \begin{subfigure}[t]{.47\textwidth}
      \includegraphics[trim={100pt 100pt 30pt 100pt},clip,width=\columnwidth]{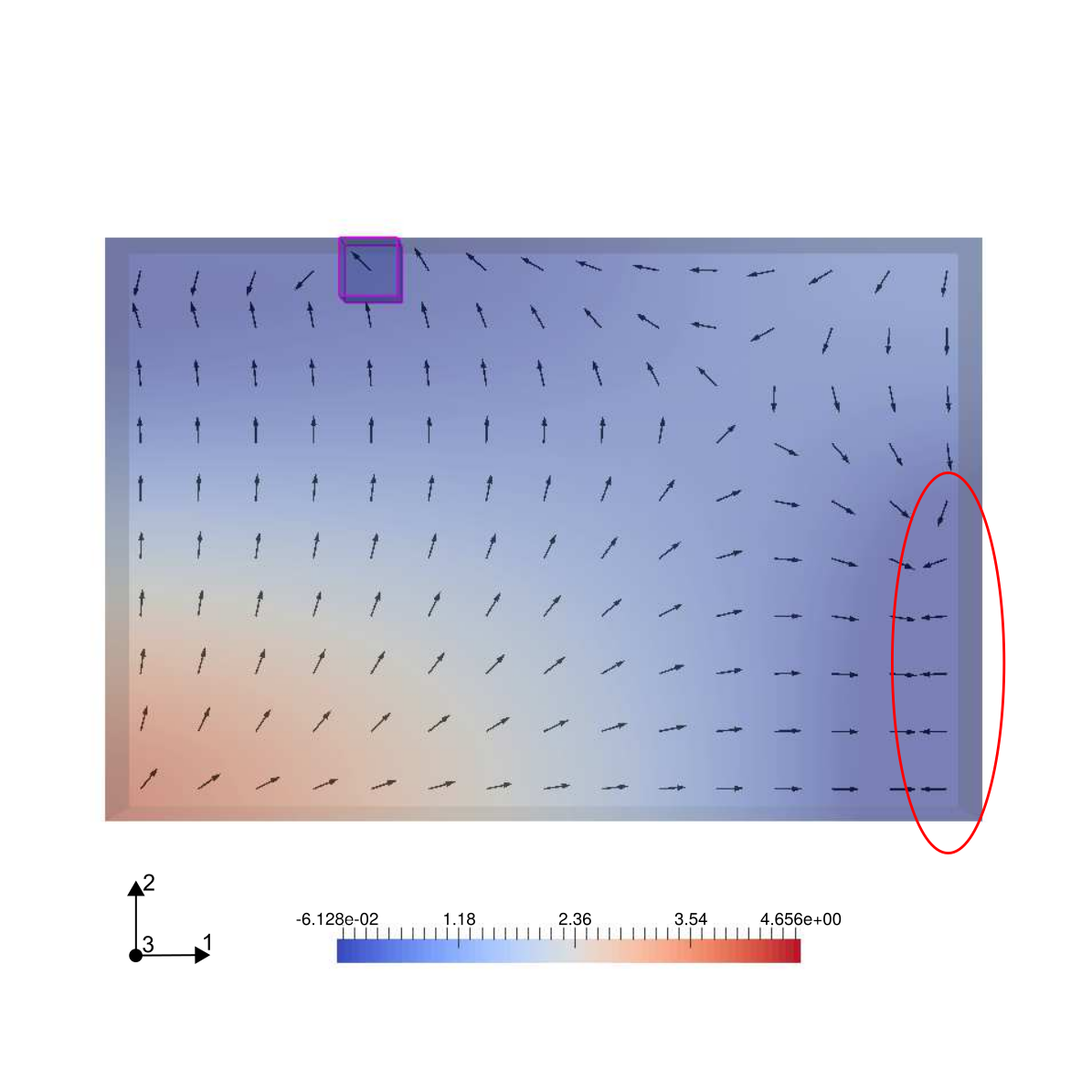}
      \label{fig-macro-adjoint+1}
    \end{subfigure}
    \caption{Left: The adjoint solution for $\Lambda = -1$.
    Right: The adjoint solution for $\Lambda = 1$.
    The color encodes magnitude of $\tilde{q}$ and the arrows show direction and magnitude of $\tilde{v}$.
    Note the opposite flow direction in the highlighted area next to $\Gamma_p^2$ which is a consequence of the sign of the multiplier $\Lambda$.}
    \label{fig-macro-adjoint}
  \end{center}
\end{figure}

We  select one finite element A located next to $\Gamma_p^1$ for which the locally periodic microstructure is optimized by virtue of the local nonlinear optimization problem \eq{eq-opg16}.
The same initial structure for the microscopic cell $Y$ is considered as in section \ref{sec-numres}, whereby the design variables are associated with the control points of a cubic spline box with three segments in each direction.
In addition, by virtue of \eq{eq-rot}, we allow for a reorientation of the cell $Y$ due to the rotation around the third axis.

For the selected finite element, the macroscopic tensors \eq{eq-opg17} are computed as the average evaluated using the values at the Gauss points.
The main eigenvector of $(\nabla P \otimes\nabla\tilde q)-\Lambda(\nabla P \otimes\nabla\tilde p)$ corresponds to the fluid velocity vector
and points into direction $\wb = [0.22,-0.97,0]$.
If we interpret $(\eeb{\ub}\otimes\eeb{\tilde\vb})$ as a fourth-order stiffness tensor, then the direction of the main stiffness points approximately into direction $\sb = [1,0,0]$.
Thus, we expect the microscopic result to have the highest permeability in direction $\wb$ and highest stiffness in direction $\sb$.
However, if we look at the convergence plot in Fig.~\ref{fig-2scale-1-convergence} we observe that the first term clearly dominates the objective function values.
As a result, the terms involving the permeability are neglected by the optimizer which focuses on increasing the stiffness of the microscopic structure in direction $\sb$.
This leads to a minimization of the channel part lumen in each cross section of $Y$ perpendicular to $\sb$ (cf. fig.~\ref{fig-2scale-1}).

\begin{figure}[tp]
  \begin{center}
    \begin{subfigure}[c]{.48\textwidth}
      \includegraphics[width=\textwidth]{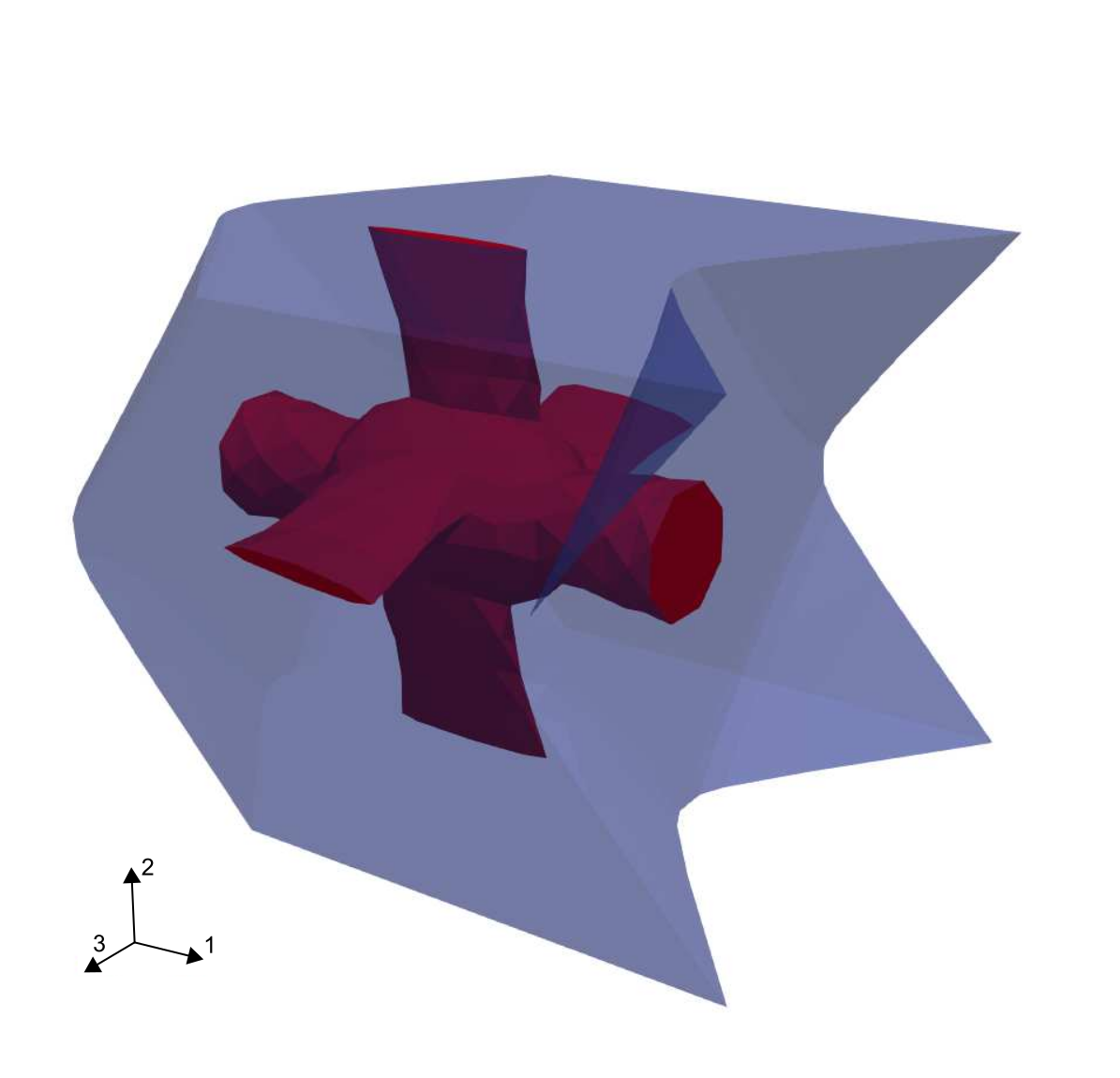}
    \end{subfigure}
    ~
    \begin{subfigure}[c]{.48\textwidth}
      \includegraphics[width=\textwidth]{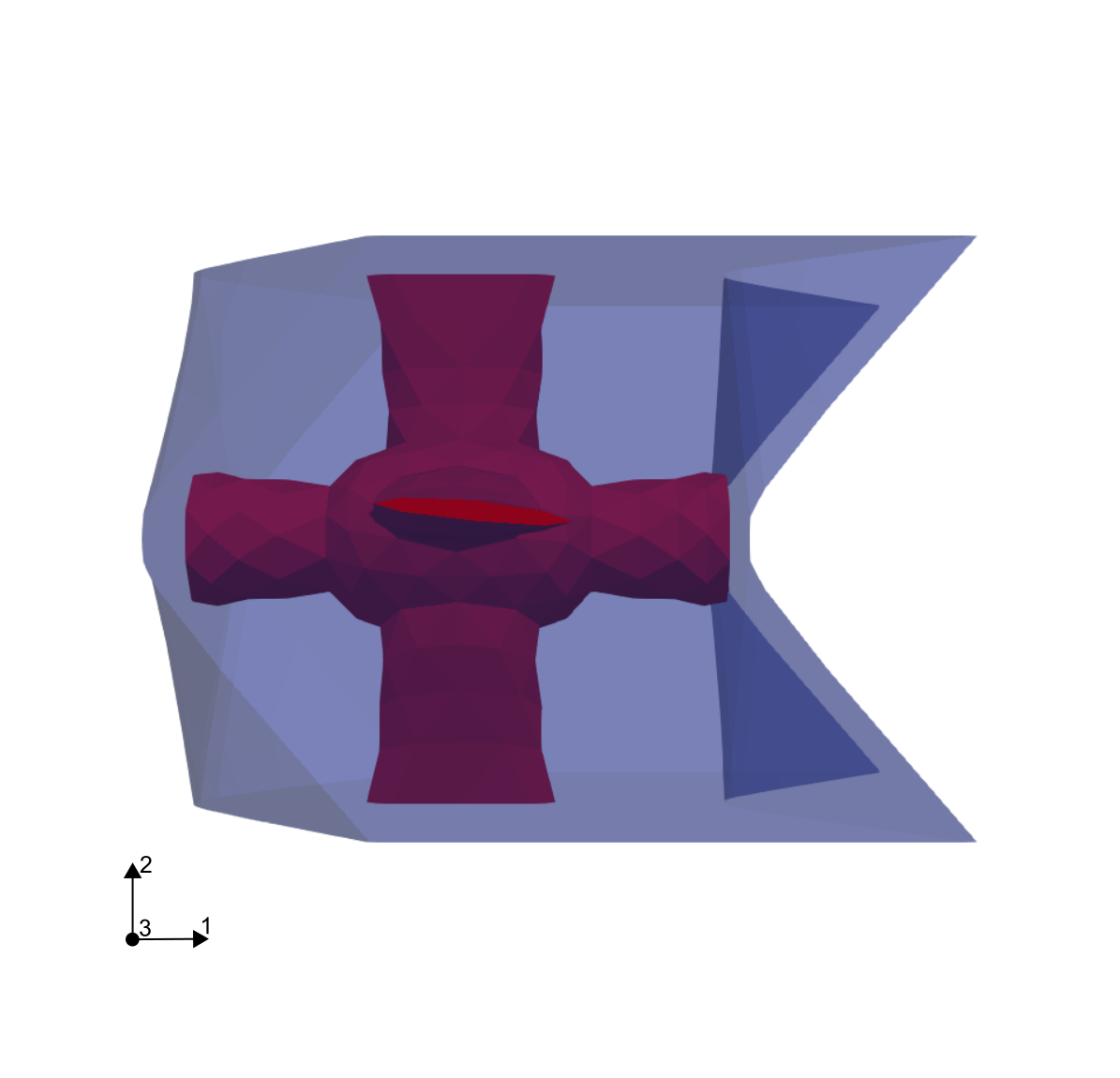}
    \end{subfigure}
    \caption{The optimized unit cell geometry for $\Lambda = -1$. The structure is rotated by $\alpha = 0.022^\circ$ around the third axis.}
    \label{fig-2scale-1}
  \end{center}
\end{figure}
\begin{figure}[tp]
  \begin{center}
    \includegraphics[width=\textwidth]{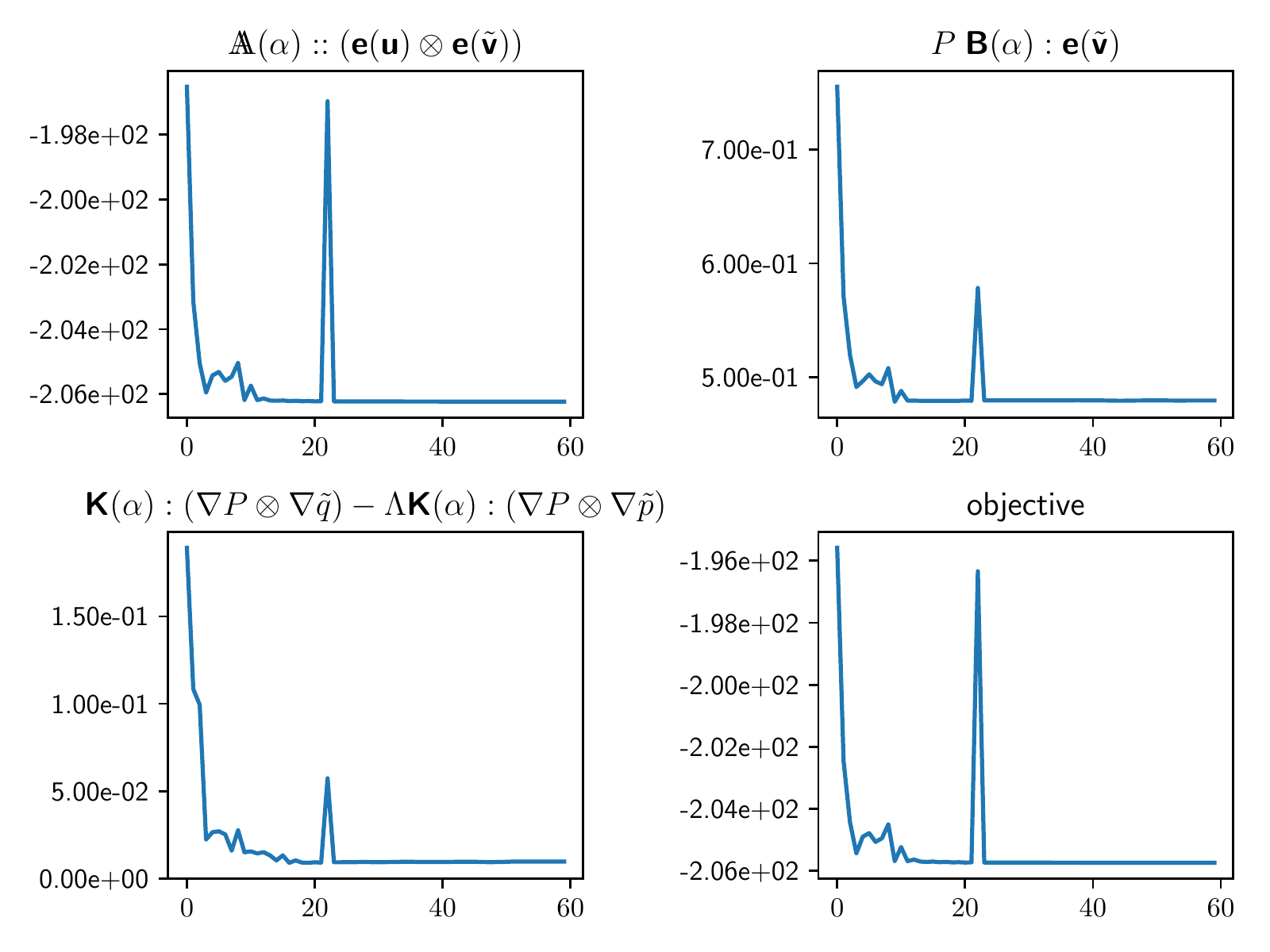}
  \end{center}
  \caption{Convergence plot for the 2-scale problem in finite element A with $\Lambda=-1$.}
  \label{fig-2scale-1-convergence}
\end{figure}

If we choose $\Lambda = -100$, still the stiffness term has most influence on the objective function, as can be seen in Fig.~\ref{fig-2scale-2-convergence}.
However, the importance of the permeability terms increases.
The optimized structure seen in Fig.~\ref{fig-2scale-2} is featured by a wide channel in vertical direction rotated by $6$ degrees, which is half the angle between $\wb$ and the second axis.
This time the stiffness tensor is almost isotropic, with only $3\%$ difference between the largest and smallest directional stiffness.
Owing to this, the rotation of the unit cell has negligible influence on the first term of the objective.
Thus a rotation is chosen which increases the permeability in direction $\wb$ and hence the overall objective.

\begin{figure}[tp]
  \begin{center}
    \begin{subfigure}[c]{.48\textwidth}
      \includegraphics[width=\textwidth]{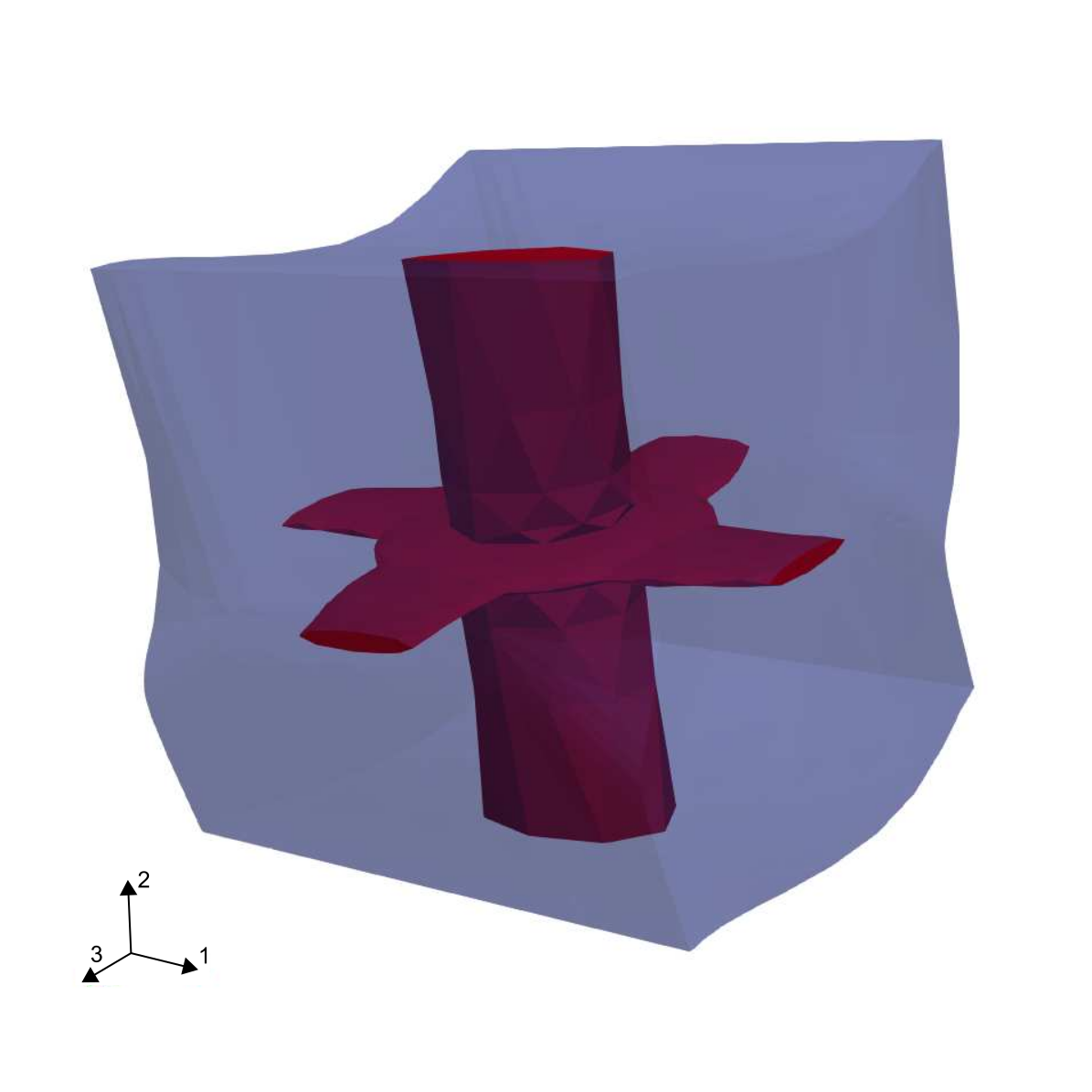}
    \end{subfigure}
    ~
    \begin{subfigure}[c]{.48\textwidth}
      \includegraphics[width=\textwidth]{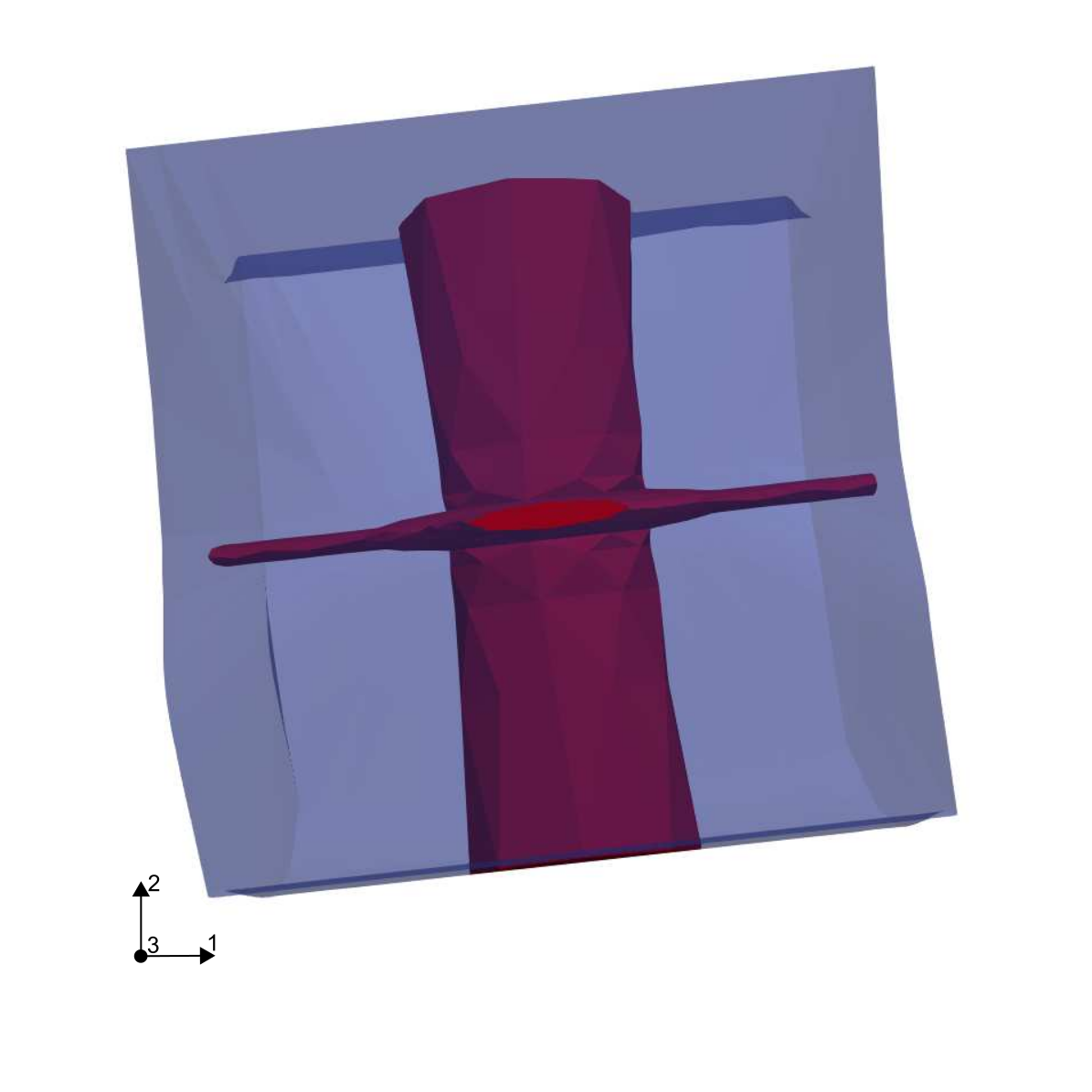}
    \end{subfigure}
    \caption{The optimized geometry for $\Lambda = -100$. The structure is rotated by $\alpha = 6.3^\circ$ around the third axis.}
    \label{fig-2scale-2}
  \end{center}
\end{figure}
\begin{figure}[tp]
  \begin{center}
    \includegraphics[width=\textwidth]{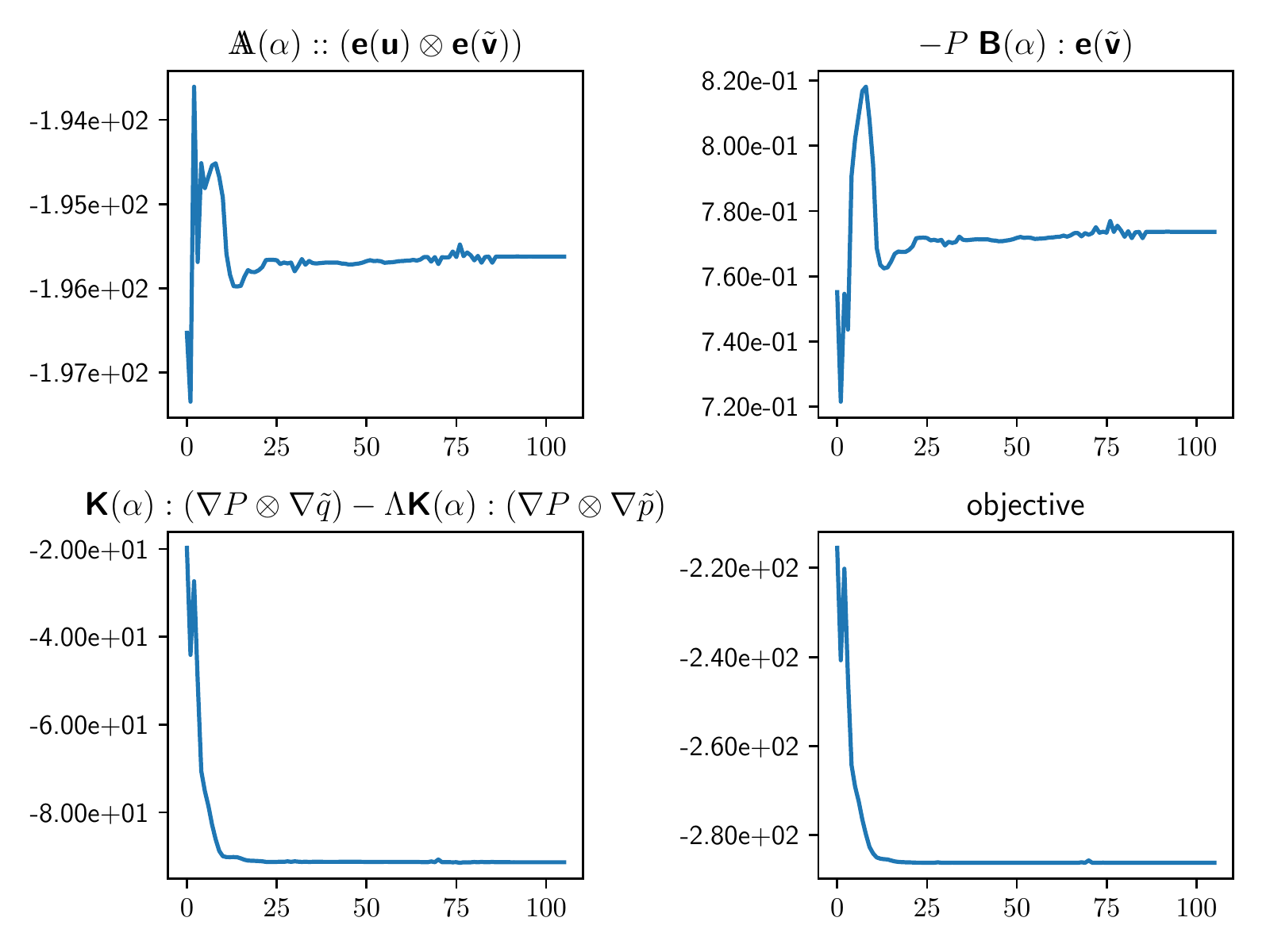}
  \end{center}
  \caption{Convergence plot for the 2-scale problem in finite element A with $\Lambda=-100$.}
  \label{fig-2scale-2-convergence}
\end{figure}

As displayed in Fig.~\ref{fig-2scale-3-convergence}, the impact of the terms involving the permeability increases with  decreasing $\Lambda$.
However, the optimized structure for $\Lambda = -1000$ is quite similar the one obtained for$\Lambda = -100$, moreover, also the rotations are similar.

\begin{figure}[tp]
  \begin{center}
    \includegraphics[width=\textwidth]{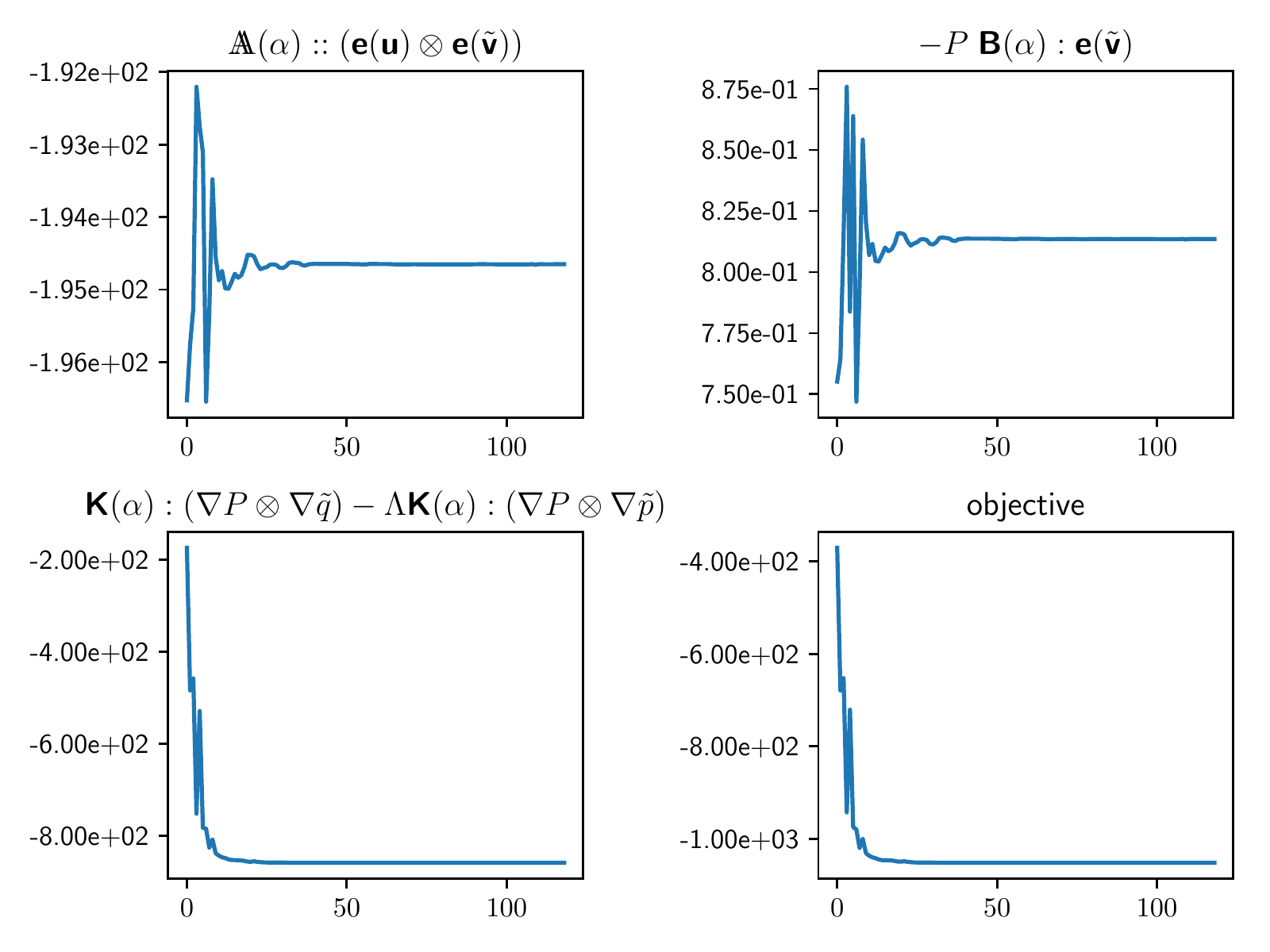}
  \end{center}
  \caption{Convergence plot for $\Lambda=-1000$}
  \label{fig-2scale-3-convergence}
\end{figure}

\section{Conclusion and outlook}\label{sec-concl}

In this study we described a methodology to optimize porous material microstructures using the shape optimization approach.
The optimality criteria are based on the effective poroelastic material coefficients.
By virtue of homogenization, these coefficients are evaluated using the characteristic responses of the representative periodic cell $Y$.
Sensitivities with respect to Biot coefficients as well as shape parameters were provided.
The shape of the fluid-filled pores was parameterized using B-spline boxes combined with only mildly restrictive regularity constraints.
As an advantage of the chosen spline  parametrization, it provides automatically the so-called design velocity fields used in the sensitivity computations which are based on the material derivative approach.
We considered optimization of the stiffness maximization subject to constraints related to the permeability and vice versa.
While in a fully symmetric setup, only moderate improvements of the chosen cost functions were possible, in an anisotropic setup significant improvements could be obtained fully exploiting the given design freedom.
These anisotropic layouts appear very naturally, when local microstructures are used to optimize some global macroscopic criterion of interest.

Following this idea, we considered a two-scale optimization problem related to the structure compliance minimization with constraints on the fluid transport.
Using the sensitivity analysis described above, a local optimization problem was derived,
which  
is essentially a linearization of the macroscopic problem with respect to the poroelastic coefficients involved in the Biot model. By virtue of the homogenization process,  these coefficients themselves are parameterized by the underlying microstructure layout; in this study, the shape of the pores is described by the spline box, cf. \cite{Andreasen2013} where the topology optimization approach was treated.
Due to its separable character, this linearized problem of the locally optimal design can be solved cell by cell.
As an example illustrating our approach, we discussed optimization results for particular areas of interest in  the macroscopic domain.
Our study should be considered as the first step towards a full two-scale optimization approach in the framework of which sequential linearizations of the given type may be used to solve the entire two-scale optimization problem.
To develop a robust two-scale optimization tool providing optimized microstructure designs in a macroscopic domain, yet some technical issues will have to be treated, such as connectivity of the pores on the boundary between subdomains of locally optimized periodic designs.

There are several extensions of the present work which are of interest in the context of smart structures and the material design. In particular, including the piezoelectric components in the microstructure will enable to control and optimize the fluid redistribution in the porous structure by the electric field; in this context, homogenization of the poro-piezoelectric materials reported in \cite{rohan-lukes-pzporel2017} can serve a suitable framework. Another important extension of the present study is by including the inertia effects in the micromodel. This will allow for optimization of  dynamic response of the porous structures to control the acoustic wave propagation and the wave dispersion.

\paragraph{Acknowledgment} This research was supported by project GACR 16-03823S and in part by projects GACR 13-00863S and LO 1506 of the Czech Ministry of Education, Youth and Sports, which enabled short visits of the first author in the NTIS, the University of West Bohemia in Pilsen.


\RELEASED{
\section*{Appendix}
\subsection*{Remarks to the two-scale optimzation}
At the macroscopic level of the optimal design problem, the auxiliary field $\tilde p$ has no influence, it can be chosen arbitrarily within the prescribed constraints defined above.
However, it plays a role in the local problems reresented by the element-wise minimization problems \eq{eq-opg16}.
The adjoint problem for $\tilde q$ can be defined according to the split $\tilde q = \tilde q^v + \Lambda\tilde q^p$, where
\begin{equation}\label{eq-opg30}
\begin{split}
\tilde q^v \in Q_0\;:\quad \cOm{q}{\tilde q^v} & = \bOm{q}{\tilde\vb},\quad \forall q \in Q_0\;,\\
\tilde q^p \in Q_0\;:\quad \cOm{q}{\tilde q^p} & = \cOm{q}{\tilde p},\quad \forall q \in Q_0\;.
\end{split}
\end{equation}
Clearly $\tilde q^p$ depends on the choice of $\tilde p$; let us define $q^*$ satisfying \eq{eq-opg30}$_2$ for
some $p^*$ substituted therein instead of $\tilde p$. Writing $\tilde q^* = \tilde q^v + \Lambda q^*$ and introducing $p^0 := \tilde p - p^* \in Q_0$, the following identity holds:
\begin{equation}\label{eq-opg31}
\begin{split}
\cOm{q}{\tilde q - \tilde q^*} = \Lambda \cOm{q}{p^0}\quad \forall q \in Q_0\;.
\end{split}
\end{equation}
The natural question arises, how the choice of $\tilde p$ influences the Lagrangian \eq{eq-opg8} and, thereby, if there is some influence on the optimal design of the microstructures.
For given $(\alpha, (\ub,p),\Lambda)$ and the adjoint variables $(\tilde\vb,\tilde q)$, or $(\tilde\vb,\tilde q^*)$ being the solutions of the adjoint problems,
we first compute the difference associated with $p^0$ (different choice of $\tilde p$; note that $\tilde \vb$ is independent of $\tilde p$),
\begin{equation}\label{eq-opg32}
\begin{split}
& \Lcal(\alpha, (\ub,p),\Lambda, (\tilde\vb,\tilde q)) - \Lcal(\alpha, (\ub,p),\Lambda, (\tilde\vb,\tilde q^*)) \\
& = \Phi_\alpha(\ub) - \Lambda \cOm{ p+\bar p}{\tilde p} + \aOm{\ub}{\tilde \vb} -  \bOm{p+\bar p}{\tilde \vb} - g(\tilde \vb) +  \cOm{ p+\bar p}{\tilde q}\\
& \quad\quad - \Phi_\alpha(\ub) + \Lambda \cOm{ p+\bar p}{p^*} - \aOm{\ub}{\tilde \vb} + \bOm{p+\bar p}{\tilde \vb} + g(\tilde \vb) - \cOm{ p+\bar p}{\tilde q^*}\\
& = -\Lambda
\underbrace{\left[\cOm{ p+\bar p}{\tilde p} - \cOm{ p+\bar p}{p^*}\right]}_{=0}
+ \underbrace{\cOm{ p+\bar p}{\tilde q - \tilde q^*}}_{=0} =0\;,
\end{split}
\end{equation}
where we use the admissibility of the state and the fact that $\tilde p - p^* = p^0 \in Q_0$ and also $\tilde q - \tilde q^*\in Q_0$.
This result yields the following projection
\begin{equation}\label{eq-opg32a}
\begin{split}
\Lambda\cOm{\bar p}{p^0} = \cOm{\bar p}{\tilde q - \tilde q^*}\;,
\end{split}
\end{equation}
so that the difference $\tilde q - \tilde q^*$ corresponds to the chosen difference $p^0$ in the sense of the projection into the ``extended boundary functions'' $\bar p$.
Indeed, to obtain \eq{eq-opg32a}, in the final expression of \eq{eq-opg32}, we substitute $\cOm{ p+\bar p}{\tilde q - \tilde q^*}$ using \eq{eq-opg31} with $q:= p$.
Now, looking at \eq{eq-opg16} and \eq{eq-opg18}, we can evaluate the differences in $F_e$ of the local problems \eq{eq-opg16} influeneced by the choice of $\tilde p$, thus,
\begin{equation}\label{eq-opg33}
\begin{split}
& F_e(\alpha,\eeb{\ub},\eeb{\tilde\vb},p,\tilde q, \bar p,\tilde p) - F_e(\alpha,\eeb{\ub},\eeb{\tilde\vb},p,\tilde q^*, \bar p,p^*) \\
& =  \int_{\Om_e} \left[\nabla (\tilde q - \tilde q^*) - \Lambda \nabla p^0\right]\cdot \Kb(\alpha)\nabla (p + \bar p) \;.
\end{split}
\end{equation}
%
Due to \eq{eq-opg31}, \eq{eq-opg33} vanishes in any subdomain $\Om_e$ where $\bar p$ vanishes.
Thus, in such a subdomain, the choice of $\tilde p$ has no influence on the objective function value $F_e(\alpha)$.
Moreover, denotig by $I_{\Gamma^2}$ the index set of all $e$ such that $\Om_e \subset \Om_{\Gamma^2} = \supp{\bar p}$,
the choice of $\tilde p$ has no influence on $\sum_{e \in I_{\Gamma^2}} F_e$.
}

\bibliographystyle{elsarticle-num}

\begin{thebibliography}{99}


\bibitem{Allaire2001}
G.~Allaire, Shape Optimization by the Homogenization Method, Springer,
  New York, 2001.

\bibitem{Andreasen2013}
C.~S.~Andreasen, O.~Sigmund, Topology optimization of fluid–structure-interaction problems
in poroelasticity.
Comput. Methods Appl. Mech. Engrg. 258 (2013) 55--62.

\bibitem{Auriault-Palencia77}
J.~L. Auriault, E.~Sanchez-Palencia, \'Etude du comportement macroscopique d'un
  milieu poreux sature deformable, Jour. de M\'ecanique 16~(4) (1977) 575--603.

\bibitem{Barbarosie2009}
C.~Barbarosie, A.-M.~Toader, Shape and topology optimization for periodic
  problems, Struct Multidisc Optim 40 (2010) 381--391

\bibitem{Barbarosie2012}
C.~Barbarosie, A.-M.~Toader, Optimization of bodies with locally periodic
  microstructure, Mechanics of advanced materials and structures, 19.4 (2012)
  290--301.

\bibitem{Bendsoe2003}
M.~P.~Bends{\o}e, O.~Sigmund, Topology optimization: theory, methods, and applications.
Springer-Verlag, Berlin and Heidelberg 2003.

\bibitem{Biot1955}
M.~A. Biot, Theory of elasticity and consolidation for a porous anisotropic
  solid, J. Appl. Phys. 26~(2) (1955) 182--185.

\bibitem{Brown2011}
D.~Brown, P.~Popov, Y.~Efendiev, On homogenization of stokes flow in slowly
  varying media with applications to fluid-structure interaction, Int. Jour. on
  Geomathematics 2~(2) (2011) 281--305.


\bibitem{Burridge-Keller-1981}
R.~Burridge, J.~B. Keller, Poroelasticity equations derived from
  microstructure, Acoust. Soc. Am. 70 (1981) 1140--1146.

\bibitem{cimrman_et_al_2009:_sfepy}
R.~Cimrman, et~al., {SfePy} home page, http://sfepy.org, software, finite
  element code and applications (2011).


\bibitem{Delfour2011}
M.~C.~Delfour, J.-P.~Zol\'{e}sio, Shapes and Geometries, Advances in Design
  and Control, SIAM, 2011.

\bibitem{Guest-Prevost-2006}
J.~K.~Guest, J.~H.~Pr\'{e}vost, Optimizing multifunctional materials: Design of
microstructures for maximized stiffness and fluid permeability.
International Journal of Solids and Structures 43 (2006) 7028--7047.

\bibitem{haslinger_makinen_2003:_introduction_shape_optimization}
J.~Haslinger, R.~A.~E. M\"akinen, Introduction to shape optimization, Advances
  in Design and Control, SIAM, 2003.

\bibitem{Hassani1998}
B.~Hassani, E.~Hinton, A review of homogenization and topology optimization I
  - homogenization theory for media with periodic structure, Computers and
  Structures, 69.6 (1998) 707--717.

\bibitem{haug_choi_komkov_1986:_design_sensitivity_analysis}
E.~J. Haug, K.~Choi, V.~Komkov, Design Sensitivity Analysis of Structural
  Systems, Academic Press, Orlando, 1986.


\bibitem{Hornung1997book}
U.~Hornung, Homogenization and porous media, Springer, Berlin, 1997.

\bibitem{IlievMikelic2008}
O.~Iliev, A.~Mikelic, P.~Popov, On upscaling certain flows in deformable porous
  media, SIAM, Multiscale Model. Simul. 7 (2008) 93--123.


\bibitem{Lee-Mei-1997}
C.~K. Lee, C.~C. Mei, Re-examination of the equations of poroelasticity, Int.
  J. Eng. Sci. 35 (1997) 329--352.


\bibitem{Mik-Wheel-2012}
A.~Mikelic, M.~Wheeler, On the interface law between a deformable porous medium
  containing a viscous fluid and an elastic body, Math. Models Meth. Appl. Sci.
  22 (2012) 1--32.

\bibitem{Rohan2003}
E.~Rohan, Sensitivity strategies in modelling heterogeneous media undergoing
  finite deformation, Math. and Computers in Simulation 61~(3-6) (2003)
  261--270.



\bibitem{Rohan-AMC}
E.~Rohan, V.~Luke\v{s},
On modelling nonlinear phenomena in deforming heterogeneous media using homogenization and sensitivity analysis concepts. {Applied Mathematics and Computation} 267 (2015) 583--595.

\bibitem{rohan-lukes-pzporel2017}
E.~Rohan, V.~Luke\v{s}, R.~Cimrman, Homogenization of the fluid-saturated
piezoelectric porous metamaterials. In Proceedings of the
VII Int. Conf. on Computational Methods for Coupled Problems in Science and Engineering,
COUPLED PROBLEMS 2017, M.~Papadrakakis, E.~Onate and B. Schrefler (Eds), 2017.

\bibitem{rohan-etal-CMAT2015-porel}
E.~Rohan, S.~Naili, T.~Lemaire,
Double porosity in fluid-saturated elastic media: deriving effective parameters
  by hierarchical homogenization of static problem.
 {Continuum Mech. Thermodyn.} 28 (2016) 1263--1293.


\bibitem{RSW-ComGeo2013}
E.~Rohan, S.~Shaw, J.~Whiteman, Poro-viscoelasticity modelling based on
  upscaling quasistatic fluid-saturated solids, Computational Geosciences
  (2013) DOI:10.1007/s10596--013--9363--1.


\bibitem{Xu-Cheng-2010}
S.~Xu, G.~Cheng, Optimum material design of minimum structural compliance
under seepage constraint. Struct Multidisc Optim (2010) 41:575--587

\bibitem{Xu2013}
G.~Xu, B.~Mourrain, R.~Duvigneau, A.~Galligo, Optimal analysis-aware
  parameterization of computational domain in 3D isogeometric analysis,
  Computer-Aided Design 45 (2013) 812--821.

\end{thebibliography}

\end{document}